\documentclass[11pt,reqno]{amsart}

\usepackage{amsthm,amssymb}
\usepackage[pagebackref,colorlinks,linkcolor=red,citecolor=blue,urlcolor=blue,hypertexnames=false]{hyperref}
\usepackage{amsrefs}
\usepackage{amscd}
\usepackage{undertilde}
\usepackage{mathrsfs}
\input xypic
\xyoption{all}
\newdir{ >}{{}*!/-5pt/@{>}}

\newcommand{\Cliff}{\mathrm{Cliff}}
\newcommand{\topdf}{\texorpdfstring}

\newcommand\ol{\overline}
\newcommand\ul{\underline}

\newcommand\Da{\overset{\circ}{D}}
\def\cA{\mathcal A}
\def\cAc{\cA_c}

\def\cC{\mathcal C}
\def\cB{\mathcal B}
\renewcommand{\cH}{\mathcal H}

\def\cZ{\mathcal Z}
\def\cZc{\cZ_c}

\def\cE{\mathcal E}

\def\fX{\mathfrak X}

\def\E{\mathbb{E}}

\def\fQ{\mathfrak{Q}_G}

\def\cF{\mathcal F}
\def\fB{\mathfrak B}
\def\fC{\mathfrak C}
\def\fD{\mathfrak D}
\def\fH{\mathfrak H}

\def\sU{\mathscr U}
\def\fall{\mathcal All}
\def\cG{\mathcal G}

\def\cK{\mathcal K}

\def\alg{\mathrm{Alg}}
\def\ring{\mathrm{Rings}}

\def\cat{\mathrm{Cat}}

\def\spt{\mathrm{Spt}}

\def\Bor{\mathfrak {BC}^*}

\def\ab{\mathfrak {Ab}}

\def\supp{\mathrm{supp}}
\def\red{\mathrm{red}}
\renewcommand\top{\mathrm{top}}

\def\even{\mathrm{even}}
\def\odd{\mathrm{odd}}

\def\ev{\mathrm{ev}}

\def\sotimes{\utilde{\otimes}}
\def\motimes{\otimes_{\mu}}
\def\gotimes{\hat{\sotimes}}

\newcommand{\C}{\mathbb{C}}

\newcommand{\Q}{\mathbb{Q}}
\newcommand{\R}{\mathbb{R}}
\newcommand{\F}{\mathbb{F}}
\newcommand{\G}{\mathbb{G}}
\newcommand{\fin}{\mathcal{F}in}
\newcommand{\vcyc}{\mathcal{V}cyc}

\newcommand{\Z}{\mathbb{Z}}
\newcommand{\N}{\mathbb{N}}

\newcommand{\ho}{\mathrm{Ho}}
\newcommand{\hofi}{\mathrm{hofiber}}

\newcommand{\fA}{\mathfrak{A}}

\newcommand{\org}{\mathrm{Or}G}

\def\bu{\bullet}

\def\colim{\operatornamewithlimits{colim}}

\def\lra{\longrightarrow}
\def\onto{\twoheadrightarrow}

\def\triqui{\vartriangleleft}
\def\weq{\overset\sim\lra}
\def\lweq{\overset\sim\longleftarrow}
\def\fibeq{\overset\sim\onto}

\numberwithin{equation}{section}
\theoremstyle{plain}
\newtheorem{thm}[equation]{Theorem}
\newtheorem{cor}[equation]{Corollary}
\newtheorem{lem}[equation]{Lemma}
\newtheorem{prop}[equation]{Proposition}

\newcommand{\comment}[1]{}  

\theoremstyle{definition}

\theoremstyle{remark}
\newtheorem{rem}[equation]{Remark}
\newtheorem{ex}[equation]{Example}


\begin{document}
\title[Algebraic $K$-theory for groups acting properly on Hilbert space]{Compact operators and algebraic $K$-theory for groups which act properly and isometrically on Hilbert space}
\author{Guillermo Corti\~nas}
\email{gcorti@dm.uba.ar}\urladdr{http://mate.dm.uba.ar/\~{}gcorti}
\author{Gisela Tartaglia}
\email{gtartag@dm.uba.ar}
\address{Dep. Matem\'atica-IMAS, FCEyN-UBA\\ Ciudad Universitaria Pab 1\\
1428 Buenos Aires\\ Argentina}
\thanks{The first author was partially supported by MTM2012-36917-C03-02. Both authors
were supported by CONICET, and partially supported by
grants UBACyT 20020100100386 and PIP 11220110100800.}
\begin{abstract}
We prove the $K$-theoretic Farrell-Jones conjecture for\linebreak
groups as in the title with
coefficient rings and $C^*$-algebras which are stable with respect to compact operators. We use this and Higson-Kasparov's result that the Baum-Connes conjecture with coefficients holds for such groups, to show that if $G$ is as in the title then the algebraic and the $C^*$-crossed products of $G$ with a stable separable $G$-$C^*$-algebra have the same
$K$-theory. 
\end{abstract}
\maketitle

\section{Introduction}\label{sec:intro}

Let $G$ be a group; a \emph{family} of subgroups of $G$ is a
nonempty family $\cF$ closed under conjugation and under taking
subgroups. A \emph{$G$-space} is a simplicial set together with a $G$-action. 
If $\cF$ is a family of subgroups of $G$ and $f:X\to Y$ is an equivariant
map of $G$-spaces, then we say that $f$ is an \emph{$\cF$-equivalence} (resp. an \emph{$\cF$-fibration}) if the map between fixed
point sets
\[
f:X^H\to Y^H
\]
is a weak equivalence (resp. a fibration) for every $H\in \cF$. A
$G$-space $X$ is called a {\em $(G,\cF)$-complex} if the
stabilizer of every simplex of $X$ is in $\cF$. The category of
$G$-spaces can be equipped with a closed model structure
where the weak equivalences (resp. the fibrations) are the $\cF$-equivalences (resp. the $\cF$-fibrations), 
(see \cite{corel}*{\S1}). The $(G,\cF)$-complexes
are the cofibrant objects in this model structure. By a general
construction of Davis and L\"uck (see \cite{dl}) any functor $E$
from the category $\Z$-$\cat$ of small $\Z$-linear categories to the
category $\spt$ of spectra which sends category equivalences to
weak equivalences of spectra gives rise to an equivariant homology theory
of $G$-spaces $X\mapsto H^G(X,E(R))$ for each unital $G$-ring $R$. If $H\subset G$ is a subgroup, 
then
\begin{equation}\label{intro:cross}
H_*^G(G/H,E(R))=E_*(R\rtimes H)
\end{equation}
is just $E_*$ evaluated at the crossed product ring. The \emph{isomorphism
conjecture} for the quadruple $(G,\cF,E,R)$ asserts that if
$\cE(G,\cF)\fibeq pt$ is a $(G,\cF)$-cofibrant replacement of the
point, then the induced map
\begin{equation}\label{intro:assem}
H_*^G(\cE(G,\cF),E(R))\to E_*(R\rtimes G)
\end{equation}
--called \emph{assembly map}-- is an isomorphism. For the family
$\cF=\fall$ of all subgroups, \eqref{intro:assem} is always an
isomorphism. The appropriate choice of $\cF$ varies with $E$.  For
$E=K$, the nonconnective algebraic $K$-theory spectrum, one takes
$\cF=\vcyc$, the family of virtually cyclic subgroups; the isomorphism conjecture
for $(G,\vcyc,K,R)$ is the $K$-theoretic
\emph{Farrell-Jones conjecture} with coefficients in $R$. Moreover, for $E=K$, \eqref{intro:assem} makes
sense for those coefficient rings $R$ which are \emph{$K$-excisive},
i.e. those for which $K$-theory satisfies excision (\cite{corel}*{Section ~3}). In this paper we are interested in the $K$-theory isomorphism conjecture for coefficient rings of the form
\begin{equation}\label{anillo}
R=I\otimes (\fA\sotimes \cK)
\end{equation}
where $I$ is a $K$-excisive $G$-ring, $\fA$ is a complex $G$-$C^*$-algebra (or more generally a $G$-bornolocal $C^*$-algebra as defined in Section \ref{sec:bornoloco}), $\otimes=\otimes_\Z$ is the algebraic tensor product, $\sotimes$ is the spatial tensor product, and $\cK$ is the ideal of compact operators in an infinite dimensional, separable,
complex Hilbert space with trivial $G$-action. 
We consider the Farrell-Jones conjecture for discrete groups having the \emph{Haagerup approximation property}. These are the countable discrete groups which admit an
affine, isometric and \emph{metrically proper} action on a real pre-Hilbert space $V$ of countably infinite dimension (or equivalently on a Hilbert space). The term
metrically proper means that for every $v\in V$,
\[
\lim_{g\to\infty}||gv||=\infty.
\]
The groups satisfying this property are also called \emph{a-T-menable}, a term coined by Gromov (\cite{gromo}).
Our main result is the following (see Theorem \ref{thm:main}).

\begin{thm}\label{thm:intro}
Let $G$ be a countable discrete group. Let $\fA$ be a $G$-$C^*$-algebra, let $I\in G$-Ring, and let $\cK=\cK(\ell^2(\N))$ be the algebra of compact operators; equip $\cK$ with the trivial $G$-action. Assume that $I$ is $K$-excisive and that $G$ has the Haagerup approximation property. Let $\fin$ be the family of finite subgroups. Then the functor $H^G(-,K(I\otimes(\fA\sotimes\cK)))$ sends $\fin$-equivalences of $G$-spaces to weak equivalences of spectra.
\end{thm}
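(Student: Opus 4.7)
The statement asserts the Farrell--Jones isomorphism conjecture with family $\fin$ of finite subgroups, for the coefficient ring $R = I\otimes(\fA\sotimes\cK)$. My plan is to establish this via the transitivity principle for isomorphism conjectures, which says that the $(G,\fin,K,R)$-conjecture is equivalent to the conjunction of the $(G,\vcyc,K,R)$-conjecture together with the $(V,\fin,K,\res_V R)$-conjecture for every virtually cyclic subgroup $V\leq G$. I would treat these two pieces separately.

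For the $\vcyc$-version, which is the technical heart, I would adapt the controlled-algebra machinery of Bartels--L\"uck--Reich (and of Bartels--Echterhoff--L\"uck in the a-T-menable setting) to our algebraic coefficients. The Haagerup hypothesis provides a proper isometric action of $G$ on a Hilbert space, supplying the metric data needed to construct geometric resolutions and to verify vanishing of the obstruction terms in a suitable controlled $K$-theory. The stability of $R$ (the $\cK$-factor) plays the role that analytic completion plays in the $C^*$-algebraic version, making it possible to run the argument at the level of algebraic $K$-theory; the $K$-excisiveness of $I$ is what ensures that this stability is compatible with the Davis--L\"uck homology theory $H^G(-,K(-))$.

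For the transition from $\vcyc$ to $\fin$, one needs the isomorphism conjecture for each virtually cyclic $V$ with respect to the family of finite subgroups, with coefficients $\res_V R$. By Bass--Heller--Swan and Waldhausen-style decompositions of the algebraic $K$-theory of crossed products by $\Z$, or by amalgamated products over finite subgroups in the general virtually cyclic case, this reduces to vanishing of Nil-type $K$-groups of $\res_V R\rtimes V$. Stability of $R$ as $S\otimes\cK$ makes this essentially formal: a ring stabilized by $\cK$ is $K$-theoretically flasque via Eilenberg-swindle arguments, so its Nil $K$-theory is trivial.

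The main obstacle I expect is the first step, namely transferring the Bartels-style controlled $K$-theory argument from the topological / $C^*$-algebraic setting to purely algebraic $K$-theory with stable coefficients. The Haagerup action supplies the geometric input, but the analytic completions that usually close controlled modules must here be replaced throughout by careful use of the $\cK$-factor, together with the bornolocal $C^*$-algebra formalism developed earlier in the paper, to guarantee that the $K$-theoretic Eilenberg-swindle style vanishing statements survive at every stage of the controlled algebra construction.
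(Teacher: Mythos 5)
Your proposal does not follow the paper's argument, and as written it has two genuine gaps. The first and decisive one is the ``technical heart'': you propose to establish the $(G,\vcyc,K,R)$-conjecture by adapting Bartels--L\"uck--Reich controlled algebra to a-T-menable groups, and you yourself flag this as the main obstacle. But this is not a proof step, it is the entire problem, and it is attacked here by a completely different method: no controlled-algebra argument for general Haagerup groups is known (even the $C^*$-analogue, Higson--Kasparov's theorem, is proved by Dirac--dual-Dirac, not by controlled topology), and the paper never proves the $\vcyc$-statement as an input. Instead it proves the $\fin$-statement \emph{directly}: the functor $\E_X(A)=H^G(X,K(I\otimes(A\motimes\cK\motimes B)))$ is shown to be excisive, homotopy invariant and $G$-stable (Theorem \ref{thm:kstab}), hence it extends to equivariant asymptotic morphisms and $E_G$-theory; Higson--Kasparov's invertibility of the dual Dirac element then makes $\E_X(\C)\to\E_Y(\C)$ a retract of $\E_X(\cAc(V))\to\E_Y(\cAc(V))$ (Proposition \ref{prop:bott}, Corollary \ref{cor:bott}); and metric properness of the action lets one decompose $\cAc(V)$ into pieces proper over $G/H$ with $H$ finite (Theorem \ref{thm:proper}), where the statement is already known from \cite{corel}. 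The $\vcyc$-version (Corollary \ref{cor:main1}) is then a \emph{consequence} of the $\fin$-version, since $\vcyc\supset\fin$ implies every $\vcyc$-equivalence is a $\fin$-equivalence; your plan runs the logic in the opposite, harder direction.

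The second gap is your Nil-vanishing step. The claim that ``a ring stabilized by $\cK$ is $K$-theoretically flasque via Eilenberg-swindle arguments'' is false: $\cK=\cK(\ell^2(\N))$ is not flasque ($K_0(\cK)\cong\Z$, and $K_*(S\sotimes\cK)\cong K_*^{\top}(S)$ by Suslin--Wodzicki, which is generally nonzero). The swindle works for $\cB(H)$ or for sum-complete categories, not for compact operators. The correct mechanism, which the paper does use when it wants to pass between $\fin$ and $\vcyc$ (Remark \ref{rem:equiK}), is that $K_*((I\otimes(-\sotimes\cK))\rtimes H)$ is \emph{homotopy invariant} by Higson's theorem \cite{hig}, hence these crossed products are $K$-regular and $K\simeq KH$ on them (Corollary \ref{cor:equiK}); one then invokes the Bartels--L\"uck result \cite{balukh} that for $KH$ the $\fin$- and $\vcyc$-assembly maps agree. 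So the conclusion you want is true, but not for the reason you give, and in any case it is ancillary to the missing main step.
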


Observe that because $\vcyc\supset\fin$, any $\vcyc$-equivalence is also a $\fin$-equivalence. Since $\cE(G,\vcyc)\to pt$ is a $\vcyc$-equivalence by definition, the theorem has the following corollary (see Corollary \ref{cor:main1}).

\begin{cor}\label{cor:intro1}
Let $G$, $I$ and $\fA$ be as in Theorem \ref{thm:intro}. Then $G$ satisfies the $K$-theoretic Farrell-Jones conjecture with coefficients
in $I\otimes(\fA\sotimes\cK)$. 
\end{cor}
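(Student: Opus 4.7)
The plan is to derive Corollary \ref{cor:intro1} as a purely formal consequence of Theorem \ref{thm:intro}, so the work amounts to unpacking the Farrell-Jones conjecture and applying the theorem to the canonical map $\cE(G,\vcyc)\to pt$. Writing $R=I\otimes(\fA\sotimes\cK)$, the first step is to recall that the $K$-theoretic Farrell-Jones conjecture with coefficients in $R$ is by definition the assertion that the assembly map \eqref{intro:assem},
\[
H_*^G(\cE(G,\vcyc),K(R))\longrightarrow K_*(R\rtimes G),
\]
is an isomorphism. Using \eqref{intro:cross} with $H=G$, the target is naturally identified with $H_*^G(G/G,K(R))=H_*^G(pt,K(R))$, so the assembly map is exactly the map on equivariant homology induced by the unique equivariant map $\cE(G,\vcyc)\to pt$.

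The second step is a comparison of families. Since $\fin\subset\vcyc$, any $\vcyc$-equivalence of $G$-spaces $X\to Y$, being a map that induces weak equivalences on $H$-fixed points for every $H\in\vcyc$, induces such equivalences in particular for every $H\in\fin$ and is therefore automatically a $\fin$-equivalence. By definition of $\cE(G,\vcyc)$ the map $\cE(G,\vcyc)\to pt$ is a $\vcyc$-equivalence, and hence a $\fin$-equivalence. Theorem \ref{thm:intro} then applies and yields that the map induced on $H^G(-,K(R))$ is a weak equivalence of spectra, which is precisely the assembly isomorphism we want.

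No real obstacle arises at the level of the corollary: the entire content is absorbed into the stronger statement of Theorem \ref{thm:intro}, whose strength lies in respecting the finer family $\fin$ rather than merely $\vcyc$. The only care required is the formal identification of the target of the assembly map through \eqref{intro:cross} and the observation that enlarging the family of subgroups along the inclusion $\fin\subset\vcyc$ weakens rather than strengthens the $\cF$-equivalence notion, so that a $\vcyc$-equivalence is automatically a $\fin$-equivalence.
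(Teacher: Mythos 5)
Your argument is correct and is essentially the paper's own proof of Corollary \ref{cor:main1}: identify the assembly map as the map induced by $\cE(G,\vcyc)\to pt$, note that this map is a $\vcyc$-equivalence and hence (since $\fin\subset\vcyc$) a $\fin$-equivalence, and apply Theorem \ref{thm:intro}. No issues.
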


Higson and Kasparov proved in \cite{hk2} that the groups which have the Haagerup approximation property satisfy the \emph{Baum-Connes conjecture} with coefficients in any separable $G$-$C^*$-algebra. The latter conjecture is the analogue of the Farrell-Jones conjecture for the topological $K$-theory of reduced $C^*$-crossed products. It asserts that the assembly map
\[
H^G(\cE(G,\fin),K^{\top}(\fA))\to K^{\top}(C^*_\red(G,\fA))
\] 
is a weak equivalence. Here $C^*_\red(G,\fA)$ is the reduced $C^*$-algebra and $H^G(-,K^{\top}(\fA))$ is equivariant topological $K$-homology. The latter homology is characterized by
\[
H_*^G(G/H,K^{\top}(\fA))=K^{\top}_*(C^*_\red(H,\fA)).
\]
There is a natural map
\begin{equation}\label{map:cross}
\fA\rtimes H\to C^*_\red(H,\fA)
\end{equation}
which is an isomorphism when $H$ is finite. We have a homotopy commutative diagram
\begin{equation}\label{intro:diag}
\xymatrix{H^G(\cE(G,\fin),K(\fA))\ar[d]\ar[r]& K(\fA\rtimes G)\ar[d]\\
H^G(\cE(G,\fin),K^{\top}(\fA))\ar[r]& K^{\top}(C^*_\red(G,\fA)).
}
\end{equation}
It follows from Suslin-Wodzicki's theorem (Karoubi's conjecture) (\cite{sw}*{Theorem 10.9}) and the facts that \eqref{map:cross} is an isomorphism for finite $H$, and that $G$ acts on $\cE(G,\fin)$ with finite stabilizers, that the vertical map on the left of \eqref{intro:diag} is a weak equivalence whenever $\fA$ is of the form $\fA=\fB\sotimes\cK$. Using this, the stability of $K^{\top}$ under tensoring with $\cK$, and Higson-Kasparov's result, we obtain the following corollary of Theorem \ref{thm:intro} (see Corollary \ref{cor:main2}).

\begin{cor}\label{cor:intro2}
Let $G$ and $\fA$ be as in Theorem \ref{thm:intro}. Assume that $\fA$ is separable. Then there is an isomorphism:
\[
K_*((\fA\sotimes\cK)\rtimes G)\cong K^{\top}_*(C^*_\red(G,\fA)).
\]
\end{cor}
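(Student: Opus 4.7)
The plan is to follow the strategy outlined around diagram \eqref{intro:diag}, but apply it with $\fA$ replaced throughout by the stable separable $G$-$C^*$-algebra $\fA\sotimes\cK$. I will consider the commutative square
\[
\xymatrix{H^G(\cE(G,\fin),K(\fA\sotimes\cK))\ar[d]_{u}\ar[r]^-{\alpha}& K((\fA\sotimes\cK)\rtimes G)\ar[d]^{v}\\
H^G(\cE(G,\fin),K^{\top}(\fA\sotimes\cK))\ar[r]_-{\beta}& K^{\top}(C^*_\red(G,\fA\sotimes\cK)).}
\]
If I can verify that $\alpha$, $\beta$, and $u$ are weak equivalences of spectra, then $v$ will be too, and one application of Morita invariance of $K^{\top}$ will identify the bottom-right corner with $K^{\top}(C^*_\red(G,\fA))$, giving the desired isomorphism after passing to $\pi_*$.

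First I would dispatch $\alpha$: since $\cE(G,\fin)\fibeq pt$ is a $\fin$-equivalence, Theorem \ref{thm:intro} applied with $I=\Z$ (trivially $K$-excisive) says that $H^G(-,K(\fA\sotimes\cK))$ carries this map to a weak equivalence, which, under the identification \eqref{intro:cross} at $G/G=\mathrm{pt}$, is precisely $\alpha$. Next, $\beta$ is the Baum--Connes assembly map for $\fA\sotimes\cK$, which is separable because $\fA$ is; since $G$ has the Haagerup approximation property, Higson--Kasparov's theorem \cite{hk2} guarantees that $\beta$ is a weak equivalence.

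The third ingredient is the left vertical $u$. Because $\cE(G,\fin)$ is a $(G,\fin)$-complex, all of its isotropy groups lie in $\fin$, so a standard cellular induction for equivariant homology reduces the statement to checking, for each finite subgroup $H$, that
\[
K((\fA\sotimes\cK)\rtimes H)\lra K^{\top}(C^*_\red(H,\fA\sotimes\cK))
\]
is a weak equivalence. For finite $H$, the map \eqref{map:cross} is an isomorphism and $C^*_\red(H,\fA\sotimes\cK)\cong C^*_\red(H,\fA)\sotimes\cK$ is a stable $C^*$-algebra, so the displayed map is nothing but the comparison map from algebraic to topological $K$-theory of this stable $C^*$-algebra; by Suslin--Wodzicki's theorem \cite{sw}*{Theorem 10.9}, it is a weak equivalence.

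Combining these three inputs forces $v$ to be a weak equivalence, and stability of topological $K$-theory under tensoring with $\cK$ provides the final identification $K^{\top}_*(C^*_\red(G,\fA\sotimes\cK))\cong K^{\top}_*(C^*_\red(G,\fA))$. I do not expect a substantive obstacle beyond the invocation of Theorem \ref{thm:intro} itself, which does all the real work; the only point requiring a little care is the cell-by-cell analysis of $u$, where one must know that both $H^G(-,K(\fA\sotimes\cK))$ and $H^G(-,K^{\top}(\fA\sotimes\cK))$ preserve homotopy colimits of $(G,\fin)$-complexes, so that the comparison can legitimately be reduced to the free-orbit case $G/H$ with $H\in\fin$.
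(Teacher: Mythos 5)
Your proposal is correct and follows essentially the same route as the paper's proof of Corollary \ref{cor:main2}: the same commutative square comparing the Farrell--Jones and Baum--Connes assembly maps, with the top arrow handled by the main theorem, the bottom by Higson--Kasparov, and the left vertical map by Suslin--Wodzicki plus the finiteness of the stabilizers of $\cE(G,\fin)$. The only (harmless) cosmetic difference is that you keep $\fA\sotimes\cK$ in the bottom row and invoke stability of $K^{\top}$ at the end, whereas the paper absorbs that step into the left vertical map.
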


Higson and Kasparov showed in \cite{hk2}*{Theorem 9.4} that if $G$ is a locally compact group which has the Haagerup property, $\fA$ is a separable $G$-$C^*$-algebra, and $C^*(G,\fA)$ is the full crossed product, then the map
\[
K_*^{\top}(C^*(G,\fA))\to K_*^{\top}(C^*_\red(G,\fA))
\]
is an isomorphism. Hence in Corollary \ref{cor:intro2} we may substitute the full $C^*$-crossed product for the reduced one. 

\goodbreak

The rest of this paper is organized as follows. In Section \ref{sec:bornoloco} we give some preliminaries on bornolocal $C^*$-algebras. These are normed $*$-algebras over $\C$ such that $\left\|a^*a\right\|=\left\|a\right\|^2$, possibly not complete, which are filtered unions of $C^*$-subalgebras.
For example, if $X$ is a locally compact Hausdorff topological space and $\fA$ is a $C^*$-algebra, then the algebra $C_c(X,\fA)$ of compactly supported continuous functions $X\to \fA$ is a bornolocal $C^*$-algebra.
We write $\Bor$ for the category of bornolocal $C^*$-algebras and $G$-$\Bor$ for the corresponding equivariant category.
In Section \ref{sec:equihomo} we prove Theorem \ref{thm:kstab}, which says that if $G$ is a discrete group, $X$ a $G$-space, $I$ a $K$-excisive $G$-ring and $B\in G$-$\Bor$, then the functor 
\begin{equation}\label{fun:intro}
\E_X:G\emph{-}\Bor\to \spt, \ \ A\mapsto H^G(X, K(I\otimes A\motimes \cK\motimes B))
\end{equation}
is excisive, homotopy invariant, and $G$-stable. Here $\motimes$ is the maximal tensor product of bornolocal $C^*$-algebras; it is defined in \eqref{motimes}. Section \ref{sec:asy} concerns equivariant asymptotic homomorphisms of $G$-bornolocal $C^*$-algebras. In this technical section, we discuss how to extend the functor \eqref{fun:intro} to a functor $\bar{\E}_X$ that can be applied to certain equivariant asymptotic homomorphisms; the main results of this section are Proposition \ref{prop:asy} and Lemma \ref{lem:conmuta}. In Section \ref{sec:dirac} we recall Higson-Kasparov's construction of a dual Dirac element in equivariant $E$-theory (\cite{hk2}). For a group $G$ which acts by affine isometries on a countably infinite dimensional Euclidean space $V$, they construct a $G$-$C^*$-algebra $\cA_0(V)$ which is a $C^*$-colimit 
over all finite dimensional subspaces $S\subset V$, of algebras of continuous functions $\R\times S\to \Cliff(\R\oplus S)$, vanishing at infinity, and taking values in the complexified Clifford algebra $\Cliff(\R\oplus S)$. They define an equivariant asymptotic homomorphism
\begin{equation}\label{intro:beta0}
\hat{\beta_0}:C_0(\R)--> \cA_0(V),
\end{equation}
and they show that its class in $E_G(C_0(\R),\cA_0(V))$, which they call the \emph{dual Dirac element}, is invertible. We define a bornolocal $G$-$C^*$-algebra $\cAc(V)$ which is an algebraic colimit, over the finite dimensional subspaces $S\subset V$, of algebras of compactly supported continuous functions $\R\times S\to \Cliff(\R\oplus S)$. The map \eqref{intro:beta0} restricts to an equivariant asymptotic homomorphism
\[
\hat{\beta_c}:C_c(\R)-->\cAc(V).
\]
Using Higson-Kasparov's result, together with Lemma \ref{lem:conmuta}, we show in Proposition \ref{prop:bott} that for the extension $\bar{\E}_X$ of Section \ref{sec:asy}, $\bar{\E}_X(\hat{\beta_c})$ has a left homotopy inverse. Then in Corollary \ref{cor:bott} we give the following application. Let $f:X\to Y$ be an equivariant map and let
\begin{equation}\label{intro:tau}
\E_X\to \E_Y
\end{equation}
be the natural transformation induced by $f$.  Then $$\E_X(\C)\to \E_Y(\C)$$ is a weak equivalence whenever $\E_X(\cAc(V))\to \E_Y(\cAc(V))$ is.
In Section \ref{sec:proper} we recall the notion of proper $G$-rings over a discrete homogeneous space $G/H$, introduced in \cite{corel}, which is analogous to the same notion for  $C^*$-algebras (\cite{ght}). It is shown in \cite{ght}*{Theorem 13.1} that the $E$-theory Baum-Connes assembly map for the full $C^*$-crossed product with coefficients in proper $G$-$C^*$-algebras is an isomorphism. The analogous result for algebraic $K$-theory of algebraic crossed products of groups and $K$-excisive $\Q$-algebras and the Farrell-Jones assembly map was proved in \cite{corel}*{Theorem 13.2.1}. Higson and Kasparov show in \cite{hk2} that if the affine isometric action of $G$ on $V$ is metrically proper, then $\cA_0(V)$ is a proper $G$-$C^*$-algebra. We prove in Theorem \ref{thm:proper} that if 
\[
\tau:\F\to \G
\]
is a natural transformation between functors $G$-$\Bor\to\spt$, then the map 
$\tau(\cAc(V))$ is a weak equivalence whenever all the following conditions are satisfied: 
\begin{itemize}
\item The action of $G$ on $V$ is metrically proper.
\item The functors $\F$ and $\G$ satisfy excision and commute up to weak equivalence with filtering colimits along injective maps.
\item If $H\subset G$ is a finite subgroup and $P$ is proper over $G/H$, then $\tau(P)$ is an equivalence.
\end{itemize}
All these results are used in Section \ref{sec:main} to prove Theorem \ref{thm:intro} (for general bornolocal $C^*$-algebras) and Corollaries \ref{cor:intro1} and \ref{cor:intro2}; they are Theorem \ref{thm:main} and Corollaries \ref{cor:main1} and \ref{cor:main2}, respectively.

\goodbreak

\smallskip

\noindent{\it Acknowledgements.} We wish to thank our colleague Gabriel Acosta for useful discussions, and Arthur Bartels and the anonymous referee for pointing out mistakes in previous versions of this paper. Part of the research for this article was carried out
while the first named author was visiting Sasha Gorokhovsky at the University of Colorado Boulder. He is thankful to UCB and his host for their hospitality and to the latter for useful discussions.

\goodbreak

\section{Bornolocal \topdf{$C^*$}{C*}-algebras}\label{sec:bornoloco}
\numberwithin{equation}{subsection}
\subsection{Definitions and examples}\label{subsec:defex}

Let $(A,\left\|\right\|)$ be a normed $*$-algebra over $\C$ such that $\left\|a^*a\right\|=\left\|a\right\|^2$ for all $a\in A$. A \emph{$C^*$-bornology} for $A$ is a filtered family $\cF$ of complete $*$-subalgebras that verifies $\bigcup_{\fA\in\cF}\fA =A$. If $\cF$ and $\cF^{\prime}$ are two $C^*$-bornologies on $A$, we say that $\cF$ is \emph{finer} than $\cF^{\prime}$ (and write $\cF \prec\cF^{\prime}$) if for each $\fA\in\cF$ there exists $\fA^{\prime}\in\cF^{\prime}$ such that $\fA\subset\fA^{\prime}$. If $\cF\prec\cF^{\prime}$ and $\cF^{\prime}\prec\cF$ we call the bornologies \emph{equivalent}.
 A \emph{bornolocal $C^*$-algebra} is a normed $*$-algebra $A$ as above equipped with an equivalence class of $C^*$-bornologies. Thus a bornolocal $C^*$-algebra is a local $C^*$-algebra in the bornological sense (cf. \cite{cmr}*{Definition 2.11}).
We write $(A,\cF)$ or simply $A$ for the algebra $A$ equipped with the equivalence class of the $C^*$-bornology $\cF$, depending on whether or not the latter needs to be emphasized. 
A \emph{morphism} between two bornolocal $C^*$-algebras $(A,\cF)$ and $(B,\cG)$ is a $*$-homomorphism $f:A\to B$ such that $\cF\prec f^{-1}(\cG):=\{f^{-1}(\fB):\fB
\in\cG\}$. Note that this definition depends only on the equivalence classes of the bornologies $\cF$ and $\cG$. For example if $(A,\cF)$ is a bornolocal $C^*$-algebra and $C\subset A$ is a closed $*$-subalgebra then $C$ is again a bornolocal $C^*$-algebra with the \emph{induced bornology} 
\begin{equation}\label{induborno}
\{\fA\cap C:\fA\in\cF\}
\end{equation}
and the inclusion is a homomorphism. We write $\Bor$ for the category of bornolocal $C^*$-algebras and morphisms. 

Any $C^*$-algebra $\fA$ may be viewed as a bornolocal $C^*$-algebra with the trivial bornology $\cF=\left\{\fA\right\}$. This gives a fully faithful embedding of the category of $C^*$-algebras into $\Bor$. If $\{A_i\}$ is a filtering system of bornolocal $C^*$-algebras with injective transfer maps then the algebraic colimit $A=\colim_iA_i$, equipped with the obvious colimit bornology, is the colimit of the system in $\Bor$. Thus any functor $F:C^*$-$\alg\to C^*$-$\alg$ which preserves monomorphisms extends to bornolocal $C^*$-algebras by 
\begin{equation}\label{eq:extendfun}
F(A,\cF)=\colim_\cF F(\fA).
\end{equation}
Hence, for example, if $X$ is a locally compact (Hausdorff) space and $A\in\Bor$ then the algebra $C_0(X,A)$ of continuous functions vanishing at infinity is again in $\Bor$. Moreover the algebra of compactly supported continuous functions is also in $\Bor$, since we may write it as the colimit
\[
C_c(X,A)=\colim\ker (C(K,\fA)\to C(\partial K,\fA)).
\]
Here the colimit runs over all pairs $(\fA,K)$ with $\fA\in\cF$ and $K\subset X$ a compact subspace which is the closure of an open subset. Recall from \cite{wegge}*{T.5.19} that the spatial tensor product $\sotimes$ of injective morphisms of $C^*$-algebras is again injective. The \emph{spatial tensor product} $A\sotimes B$ of bornolocal $C^*$-algebras is defined by using \eqref{eq:extendfun} twice. For example, $C_c(X,A)=C_c(X)\sotimes A$. The \emph{graded spatial tensor product} $A\gotimes B$ of $\Z/2\Z$-graded bornolocal $C^*$-algebras $A$ and $B$ is defined similarly.

If $B\in\Bor$ we write $B[0,1]=C([0,1],B)$ for the algebra of continuous functions. Two homomorphisms $f_0,f_1:A\to B\in\Bor$ are \emph{homotopic} if there exists $H:A\to B[0,1]\in\Bor$ such that $\ev_i H=f_i$ ($i=0,1$).
 
\subsection{Exact sequences}\label{subsec:exaseqs}

If $(A,\cF)\in\Bor$ then a \emph{bornolocal ideal} in $A$ is a ring theoretic, closed two-sided ideal $I$, equipped with the equivalence class of the induced bornology \eqref{induborno}. Note that any such ideal is automatically self-adjoint.
The kernel of a homomorphism $f:A\to B$ in $\Bor$ in the categorical sense is just the ring theoretic kernel $\ker f$ with the induced bornology. If $A=(A,\cF)\in\Bor$ and $I\triqui A$ is a bornolocal ideal, then the cokernel of the inclusion map $I\subset A$ is $A/I$ equipped with the equivalence class of the bornology $\{\fA/\fA\cap I:\fA\in\cF\}$. A sequence 
\begin{equation}\label{seq:abc}
\xymatrix{0\to A\ar[r]^(.6)i& B\ar[r]^(.4)p &C\to 0}
\end{equation}
of bornolocal $C^*$-algebras is \emph{exact} if $i$ is a kernel of $p$ and $p$ is a cokernel of $i$. By our previous remarks, if $B=(B,\cF)$ then \eqref{seq:abc} is
isomorphic to the algebraic colimit of the exact sequences of $C^*$-algebras
\[
\xymatrix{0\to A\cap\fB\ar[r]^(.6)i& \fB\ar[r]^(.3)p &\fB/A\cap\fB\to 0}
\]
with $\fB\in\cF$, and this colimit coincides with the colimit in $\Bor$. Conversely, the colimit in $\Bor$ of any filtering system of short exact sequences of $C^*$-algebras along monomorphisms is exact. 

\section{Equivariant homology}\label{sec:equihomo}

\subsection{Homotopy invariance, excision, stability, and equivariant \topdf{$E$}{E}-theory}\label{subsec:eg}

Let $G$ be a countable discrete group. Consider the category\linebreak
$G$-$\Bor$ of $G$-bornolocal $C^*$-algebras and equivariant homomorphisms. If $A,B\in G$-$\Bor$,
we equip $A\sotimes B$ with the diagonal action. Let $A[0,1]=C([0,1],A)=A\sotimes \C[0,1]$ with the trivial action on $\C[0,1]$. The natural map
\[
c:A\to A[0,1],\ \ c(a)(t)=a, \ \ t\in[0,1],
\]
is $G$-equivariant. 
Let $\E:G$-$\Bor\to\spt$. We say that $\E$ is \emph{homotopy invariant} if $\E(c)$ is a weak equivalence for every $A\in G$-$\Bor$. We say that $\E$ is \emph{excisive} if for every exact sequence \eqref{seq:abc} of equivariant maps, 
\begin{equation}\label{seq:exciE}
\E(A)\to \E(B)\to \E(C)
\end{equation}
is a homotopy fibration sequence.  

Any equivariant orthogonal decomposition $H=H_1\perp H_2$ of a separable $G$-Hilbert space gives rise to a $C^*$-algebra homomorphism $\cK(H_i)\to \cK(H)$ ($i=1,2$) between the algebras of compact operators. We say that $\E$ is \emph{$G$-stable} (resp. \emph{stable}) if for every equivariant orthogonal decomposition as above (resp. for every decomposition as above where $\dim H_1=1$ and $G$ acts trivially on $H$) and every $A\in G$-$\Bor$,
$\E$ sends the maps 
\begin{equation}\label{maps:gstab}
A\sotimes \cK(H_1)\to A\sotimes \cK(H)\leftarrow A\sotimes\cK(H_2)
\end{equation} 
to weak equivalences. Thus if $H_1$ and $H_2$
are $G$-Hilbert spaces and $\E$ is $G$-stable then the maps \eqref{maps:gstab} induce a weak equivalence 
\[
\E(A\sotimes \cK(H_1))\weq \E(A\sotimes \cK(H_2)).
\]

\subsection{Equivariant algebraic \topdf{$K$}{K}-homology}
Write $K$ for the nonconnective algebraic $K$-theory spectrum. If $R$ is a ring and $I\triqui R$ is an ideal, we write $K(R:I)=\hofi(K(R)\to K(R/I))$. Recall that a ring $I$ is \emph{$K$-excisive} if whenever $I\triqui R$
and $I\triqui S$ are two ideal embeddings and $f:R\to S$ is a compatible ring homomorphism, the map $K(R:I)\to K(S:I)$ is a weak equivalence.  

If $I$ is a $G$-ring, the (algebraic) \emph{crossed product} $I\rtimes G$ is the tensor product $I\otimes \Z[G]$ equipped with the twisted product
\[
(a\rtimes g)(b\rtimes h)=ag(b)\rtimes gh. 
\]
In the next lemma and elsewhere, we shall use the \emph{maximal tensor product} $C\motimes D$ of two bornolocal $C^*$-algebras $C,D$. If $(C,\cF)$, $(D,\cG)\in\Bor$, this is the 
algebraic colimit
\begin{equation}\label{motimes}
C\motimes D=\colim_{\cF\times\cG}\fC\motimes\fD.
\end{equation}
One checks that the colimit depends only on the equivalence classes of $\cF$ and $\cG$, so that $C\motimes D$ is a well-defined $\C$-algebra. If either $C$ or $D$ is \emph{nuclear}, i.e. it has a bornology in which every element is a nuclear $C^*$-algebra, then $C\motimes D=C\sotimes D$ is the bornolocal $C^*$-algebra of Section \ref{subsec:defex}. If $G$ is a discrete group acting on both $C$ and $D$, then we consider $C\motimes D$ as a $G$-algebra equipped with the diagonal action. If $B,C$ and $D$ are in $G$-$\Bor$ and $C$ is nuclear, then there is an associativity isomorphism
\begin{equation}\label{asoc}
(B\motimes C)\motimes D\cong B\motimes (C\motimes D).
\end{equation}
In this case we shall abuse notation and write $B\motimes C\motimes D$ for $(B\motimes C)\motimes D$.

Let $G$ be a group and let $\org$ be its \emph{orbit category}. If $G/H\in \org$ write $\cG(G/H)$ for the \emph{transport groupoid}.  If $R$ is a unital $G$-ring, we can form the
\emph{crossed product} $\Z$-linear category $R\rtimes\cG(G/H)$ \cite{corel}*{Section 3.1}. Let $I\triqui R$ be a two-sided ideal, closed under the action of $G$; consider the homotopy fiber 
\begin{multline*}
K(R\rtimes\cG(G/H):I\rtimes\cG(G/H))=\\
\hofi(K(R\rtimes\cG(G/H))\to K(R/I\rtimes\cG(G/H))).
\end{multline*}
The \emph{$G$-equivariant $K$-homology} of a $G$-space $X$ with coefficients in $(R:I)$ is the coend
\[
H^G(X,K(R:I))=\int^{\org}X^H_+\land K(R\rtimes \cG(G/H):I\rtimes \cG(G/H)).
\]
Let $\tilde{I}$ be the \emph{unitalization}; this is the abelian group $I\oplus\Z$ equipped with the following multiplication:
\[
(x,m)(y,n)=(xy+my+nx,mn).
\]
If $I$ is a $G$-ring, we write
\begin{gather*}
K(I\rtimes\cG(G/H))=K(\tilde{I}\rtimes\cG(G/H):I\rtimes\cG(G/H))\\
\mbox{and }H^G(X,K(I))=H^G(X,K(\tilde{I}:I)).
\end{gather*}
If $I$ is unital, the two definitions of $K(I\rtimes\cG(G/H))$ and $H^G(X,K(I))$ are weakly equivalent, by \cite{corel}*{Propositions 3.3.9(a) and 4.3.1}. If $I$ is $K$-excisive and $I\triqui R$ is an ideal embedding into a unital ring $R$, then by \cite{corel}*{Propositions 3.3.12 and 4.3.1}, the canonical map of $\org$-spectra is a weak equivalence
\begin{equation}\label{map:exciorg}
K(I\rtimes\cG(G/H))\weq K(R\rtimes\cG(G/H):I\rtimes\cG(G/H)).
\end{equation}
For any $G$-ring $I$ we have a weak equivalence, natural in $I$
\[
K(\tilde{I}\rtimes H:I\rtimes H)\weq K(I\rtimes\cG(G/H)).
\]
If $I$ is $K$-excisive, we furthermore have
\[
K(I\rtimes H)\weq K(\tilde{I}\rtimes H:I\rtimes H).
\]
It was proved in \cite{corel} that $K(-\rtimes\cG(G/H))$ and $H^G(X,K(-))$ send short exact sequences of $K$-excisive $G$-rings to homotopy fibration sequences (\cite{corel}*{Propositions 3.3.9(b) and 4.3.1}). More generally, we have the following proposition, which we shall use in Section \ref{sec:asy}.

\begin{prop}\label{prop:exci}
Let
\[
0\to I\to A\to B\to 0
\]
be an exact sequence of $G$-rings. Assume that $I$ is $K$-excisive. Let $X$ be a $G$-space. Then 
\[
H^G(X,K(I))\to H^G(X,K(A))\to H^G(X,K(B))
\]
is a homotopy fibration sequence.
\end{prop}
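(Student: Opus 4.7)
My approach is to first establish the asserted fibration sequence levelwise as a sequence of $\org$-spectra, and then invoke the homotopy colimit machinery (as in \cite{corel}*{Proposition 4.3.1}) to pass to $H^G(X,-)$.

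The levelwise statement, for each $G/H\in\org$, asks that
\[
K(I\rtimes\cG(G/H))\to K(A\rtimes\cG(G/H))\to K(B\rtimes\cG(G/H))
\]
be a homotopy fibration sequence of spectra. To lighten notation, write $\cG=\cG(G/H)$. A direct computation shows that the ring homomorphism $\tilde{A}\to\tilde{B}$, $(a,n)\mapsto(\bar{a},n)$, is surjective with kernel $I\oplus 0\cong I$; consequently $\tilde{A}/I\cong\tilde{B}$, and the augmentation $\tilde{A}\to\Z$ factors through $\tilde{B}$. From the definitions of $K(A\rtimes\cG)$ and $K(B\rtimes\cG)$ as homotopy fibers, this yields a map of horizontal homotopy fibration sequences
\[
\bigl(K(A\rtimes\cG)\to K(\tilde{A}\rtimes\cG)\to K(\Z\rtimes\cG)\bigr)\longrightarrow \bigl(K(B\rtimes\cG)\to K(\tilde{B}\rtimes\cG)\to K(\Z\rtimes\cG)\bigr)
\]
in which the rightmost vertical map is the identity.

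Since the right column is an equivalence, the induced map on column-wise homotopy fibers gives a weak equivalence
\[
\hofi\bigl(K(A\rtimes\cG)\to K(B\rtimes\cG)\bigr)\weq K(\tilde{A}\rtimes\cG:I\rtimes\cG).
\]
Because $I$ is $K$-excisive and $\tilde{A}$ is unital, the equivalence \eqref{map:exciorg} identifies the right-hand side with $K(I\rtimes\cG)$. A naturality check then shows that the resulting composite equivalence $K(I\rtimes\cG)\weq\hofi(K(A\rtimes\cG)\to K(B\rtimes\cG))$ is realized by the canonical map $K(I\rtimes\cG)\to K(A\rtimes\cG)$ induced by $I\hookrightarrow A$, which establishes the levelwise fibration sequence.

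With the levelwise statement in hand, the passage to $H^G(X,-)$ is formal: the coend defining $H^G(X,-)$ preserves levelwise homotopy fibration sequences of $\org$-spectra, which is the content (or an immediate consequence) of \cite{corel}*{Proposition 4.3.1}. The point requiring genuine care, I expect, is the naturality claim of the previous paragraph: one must verify that the composite of the column-wise fiber comparison and the $K$-excisiveness equivalence is indeed homotopic to the evident natural transformation, rather than differing from it by a nontrivial autoequivalence. Once this is checked, the proposition follows.
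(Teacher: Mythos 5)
Your argument is essentially identical to the paper's: reduce to $X=G/H$, compare the two defining fibration sequences for $K(A\rtimes\cG(G/H))$ and $K(B\rtimes\cG(G/H))$ over the common base $K(\Z[\cG(G/H)])$, identify the homotopy fiber of the left column with that of the middle column, and then with $K(I\rtimes\cG(G/H))$ via \eqref{map:exciorg}. The extra naturality check you flag is a reasonable point of care but does not change the route, which matches the paper's proof.
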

\begin{proof} It suffices to prove the proposition for $X$ of the form $G/H$ where $H\subset G$ is a subgroup. 
We have a homotopy commutative diagram with homotopy fibration rows
\[
\xymatrix{
K(A\rtimes\cG(G/H))\ar[r]\ar[d]&K(\tilde{A}\rtimes\cG(G/H))\ar[d]\ar[r]&K(\Z[\cG(G/H)])\ar@{=}[d]\\
K(B\rtimes\cG(G/H))\ar[r]&K(\tilde{B}\rtimes\cG(G/H))\ar[r]&K(\Z[\cG(G/H)])\\
}
\]
It follows that the homotopy fiber of the first vertical map is weakly equivalent to that of the middle map, which in turn is weakly equivalent to $K(I\rtimes\cG(G/H))$, by \eqref{map:exciorg}.
\end{proof}

\bigskip

\subsection{Equivariant \topdf{$K$}{K}-homology of stable algebras}

The following lemma is well-known and straightforward; it will be used in the proof of Theorem \ref{thm:kstab} below.

\begin{lem}\label{lem:alma}
Let $H$ be a $G$-Hilbert space; if $g\in G$, write $u_g\in\cB(H)$ for the unitary implementing the action of $g$ on $H$. Let $I$ be a $G$-ring and $A,B\in G$-$\Bor$. Let $\underline{H}$ be $H$ with the trivial $G$-action. Then the map
\begin{align*}
(I\otimes (A\motimes \cK(H)\motimes B))\rtimes G\to (I\otimes (A\motimes\cK(\ul{H})\motimes B))\rtimes G\\ 
(x\otimes (a\motimes T\motimes b)) \rtimes g\mapsto (x\otimes (a\motimes Tu_g\motimes b))\rtimes g
\end{align*}
is an isomorphism.
\end{lem}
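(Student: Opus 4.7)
The statement is an instance of the standard untwisting trick: the $G$-action on the $\cK(H)$-factor is inner, implemented by the cocycle of unitaries $(u_g)_{g\in G}$ with $u_{gh}=u_gu_h$, and one can absorb this cocycle into the group-algebra coordinate by multiplying on the right by $u_g$ in each graded piece. Note that as $\C$-algebras $\cK(H)=\cK(\ul H)$; only the $G$-action is different. So both crossed products have the same underlying $\Z$-module $\bigoplus_{g\in G}\bigl(I\otimes(A\motimes\cK(H)\motimes B)\bigr)$, and the map $\phi$ in the statement is just the linear map that, on the $g$-th summand, tensors the identity on $I$, $A$, and $B$ with the $\C$-linear map $T\mapsto Tu_g$ on the $\cK$-slot. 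Since $u_g\in\cB(H)$ is bounded, $Tu_g$ is again compact, so $\phi$ lands in the right algebra; since $u_g$ is invertible with inverse $u_{g^{-1}}$, the explicit two-sided inverse is
\[
\psi\colon (x\otimes (a\motimes T\motimes b))\rtimes g\mapsto (x\otimes (a\motimes Tu_{g^{-1}}\motimes b))\rtimes g,
\]
so $\phi$ is a $\Z$-module isomorphism.

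Multiplicativity is a direct calculation. Writing $r_1=x\otimes (a\motimes T\motimes b)$ and $r_2=y\otimes (c\motimes S\motimes d)$, the rule $(r_1\rtimes g)(r_2\rtimes h)=r_1\cdot g(r_2)\rtimes gh$ in the source crossed product, together with the fact that $g$ acts on the $\cK(H)$-factor by $S\mapsto u_g S u_g^{-1}$ and on the other factors by the given actions, gives
\[
(r_1\rtimes g)(r_2\rtimes h)=\bigl(xg(y)\otimes(ag(c)\motimes Tu_gSu_g^{-1}\motimes bg(d))\bigr)\rtimes gh.
\]
Applying $\phi$ multiplies the $\cK$-slot on the right by $u_{gh}$, and using $u_g^{-1}u_{gh}=u_h$ one obtains $Tu_gSu_h$ there. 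On the other hand, in the target crossed product the $G$-action on $\cK(\ul H)$ is trivial, so the action of $g$ on $\phi(r_2\rtimes h)=(y\otimes(c\motimes Su_h\motimes d))\rtimes h$ leaves the middle factor unchanged, and
\[
\phi(r_1\rtimes g)\phi(r_2\rtimes h)=\bigl(xg(y)\otimes(ag(c)\motimes (Tu_g)(Su_h)\motimes bg(d))\bigr)\rtimes gh,
\]
which matches. Hence $\phi$ is a ring isomorphism.

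There is essentially no obstacle: the only mild subtlety is conceptual, namely that $\phi$ is \emph{not} induced by any ring map between the coefficient algebras $I\otimes(A\motimes\cK(H)\motimes B)$ and $I\otimes(A\motimes\cK(\ul H)\motimes B)$ (these are literally the same $\C$-algebra, with different $G$-actions); the $u_g$-twist is inseparable from the grading shift in the crossed product and is precisely what exchanges one $G$-action for the other. The bornolocal structure plays no role in the statement, since the crossed products are treated purely as $\Z$-algebras.
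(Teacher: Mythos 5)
Your proof is correct and is exactly the standard untwisting argument the authors have in mind: the paper itself offers no proof, declaring the lemma ``well-known and straightforward,'' and your computation (absorbing the inner cocycle $u_g$ into the $g$-graded piece, with inverse given by $T\mapsto Tu_{g^{-1}}$) verifies it in full. The only point you elide is why $T\mapsto Tu_g$ is well defined on the whole maximal tensor product rather than just on elementary tensors, but this is immediate since $1\motimes u_g\motimes 1$ is a multiplier of $A\motimes\cK(H)\motimes B$ and right multiplication by a multiplier is defined everywhere.
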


\bigskip

We now come to the main theorem of this section.

\begin{thm}\label{thm:kstab}
Let $G$ be a countable discrete group, $I$ a $G$-ring, $B\in G$-$\Bor$ and $\cK=\cK(\ell^2(\N))$ the algebra of compact operators with trivial $G$-action. Assume that $I$ is $K$-excisive. Let $X$ be a $G$-simplicial set. Then the functor
\[
G\emph{-}\Bor\to\spt,\ \ A\mapsto H^G(X, K(I\otimes (A\motimes \cK\motimes B)))
\]
is excisive, homotopy invariant, and $G$-stable.
\end{thm}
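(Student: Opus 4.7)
Writing $R_A := I \otimes (A \motimes \cK \motimes B)$ for brevity, my plan is to verify excision, homotopy invariance, and $G$-stability separately, in each case reducing via the coend defining $H^G(X,-)$ (and Proposition \ref{prop:exci} where relevant) to a statement about the $K$-theory of $R_A$ or of its crossed product $R_A \rtimes \cG(G/H)$ for each $G/H \in \org$. The unifying feature is that $\cK$ appears as a tensor factor of $R_A$, so Suslin-Wodzicki's theorem \cite{sw} applies: any such stable ring is $H$-unital (hence $K$-excisive) and its $K$-theory is continuously homotopy invariant.

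For \emph{homotopy invariance}, I would use nuclearity of $C[0,1]$ and $\cK$ together with the associativity isomorphism \eqref{asoc} to identify
\[
A[0,1] \motimes \cK \motimes B \;\cong\; (A \motimes \cK \motimes B)[0,1],
\]
so the map induced by $c\colon A \to A[0,1]$ becomes the unit-interval inclusion $R_A \to R_A[0,1]$. Since $R_A$ is stable, continuous homotopy invariance of $K$-theory on stable rings (and likewise on their crossed products by $\cG(G/H)$) gives a $K$-equivalence at every orbit-level, which passes through the coend.

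For \emph{excision}, given an exact sequence $0 \to A_1 \to A_2 \to A_3 \to 0$ in $G$-$\Bor$, I would show that
\[
0 \to R_{A_1} \to R_{A_2} \to R_{A_3} \to 0
\]
is exact as a sequence of $G$-rings: $-\motimes\cK$ preserves exactness since $\cK$ is nuclear (so $\motimes = \sotimes$ is spatial), the subsequent $-\motimes B$ is handled by a filtered colimit argument on bornolocal algebras, and the algebraic $I\otimes -$ is exact because all rings in sight are $\C$-vector spaces and hence $\Z$-flat. Each $R_{A_i}$ is $K$-excisive because stability under $\cK$ forces $H$-unitality and $H$-unitality is preserved by tensor products with $K$-excisive rings. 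Proposition \ref{prop:exci} then yields the required homotopy fibration sequence on $H^G(X,-)$. For \emph{$G$-stability}, given a $G$-equivariant decomposition $H = H_1 \perp H_2$, Lemma \ref{lem:alma} applied inside each transport groupoid $\cG(G/K)$ replaces the $G$-Hilbert spaces $\cK(H_i)$ and $\cK(H)$ appearing in $R_A \rtimes \cG(G/K)$ by their counterparts with the trivial $G$-action; the comparison then reduces to the classical (non-equivariant) stability of $K$-theory on stable rings, and the coend delivers the weak equivalence.

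The main technical obstacle is verifying exactness of the functor $A \mapsto R_A$ on $G$-$\Bor$, since the $C^*$-algebraic maximal tensor product is not exact in general; one must carefully exploit both the nuclearity of $\cK$ and the filtered algebraic colimit structure of the bornolocal tensor product \eqref{motimes}. Once this exactness is established, the three properties all follow from the standard $K$-theoretic apparatus on stable rings combined with Proposition \ref{prop:exci} and Lemma \ref{lem:alma}.
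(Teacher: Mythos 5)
Your treatment of excision and of $G$-stability follows essentially the paper's route: one checks that $A\mapsto I\otimes(A\motimes\cK\motimes B)$ is exact and that the resulting rings are $K$-excisive (via Suslin--Wodzicki's theorem that $C^*$-algebras are $K$-excisive, closure of $K$-excisive rings under filtering colimits, and stability of $K$-excisiveness under $I\otimes-$), and for $G$-stability one reduces to each orbit $G/S$ and applies Lemma \ref{lem:alma} to untwist the action on $\cK(H)$. One correction there: the ``main technical obstacle'' you flag is not an obstacle, because the \emph{maximal} tensor product of $C^*$-algebras is exact (\cite{ght}*{Lemma 4.1}); it is the spatial tensor product whose exactness can fail, and nuclearity of $\cK$ is only needed to identify $\motimes$ with $\sotimes$ where convenient.

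The genuine gap is in homotopy invariance. You appeal to ``continuous homotopy invariance of $K$-theory on stable rings,'' attributed to Suslin--Wodzicki. No such theorem applies here: Karoubi's conjecture (\cite{sw}*{Theorem 10.9}) concerns stable \emph{$C^*$-algebras}, whereas the ring $(I\otimes(A\motimes\cK\motimes B))\rtimes\cG(G/H)$ is not a $C^*$-algebra at all --- $I$ is an arbitrary $K$-excisive ring and the crossed product is purely algebraic --- so you cannot identify its $K$-theory with a homotopy-invariant topological theory. What is needed is the \emph{functorial} form of this principle, namely Higson's homotopy invariance theorem (\cite{hig}*{Theorem 3.2.2}): a split-exact, stable functor from $C^*$-algebras to abelian groups is homotopy invariant. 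This is exactly how the paper proceeds: it fixes $n$, checks that $\fC\mapsto H^G_n(X,K(I\otimes((A\sotimes\fC)\motimes B)))$ is split exact, concludes that its composite with $\fC\mapsto\fC\sotimes\cK$ is homotopy invariant, and specializes to $\fC=\C$; this single step also delivers the non-equivariant stability that your $G$-stability argument presupposes. Without Higson's theorem or an equivalent substitute, your homotopy-invariance step does not go through.
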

\begin{proof}
By \cite{sw}*{Corollary 10.4}, $C^*$-algebras are $K$-excisive, and by \cite{corel}*{Proposition A.4.4} $K$-excisive rings are closed under filtering colimits. It follows
that $A\motimes B$ is $K$-excisive for every pair of bornolocal $C^*$-algebras $A$ and $B$. Hence $I\otimes (A\motimes B)$ is $K$-excisive for every $A,B\in\Bor$, by \cite{corel}*{Proposition A.5.3}. Besides, by \cite{ght}*{Lemma 4.1} and Section \ref{subsec:exaseqs}, 
$-\motimes B:\Bor\to\C$-$\alg$ is exact. Hence the functor of the proposition is excisive, by \cite{corel}*{Propositions 3.3.9 and 4.3.1}. Fix $n\in\Z$ and consider the functor
\[
F:C^*\emph{-}\alg\to\ab,\ \ F(\fC)=H^G_n(X,K(I\otimes((A\sotimes\fC)\motimes B))).
\]
Here $\fC$ is regarded as a $G$-$C^*$-algebra with trivial action. Again by \cite{corel}*{Propositions 3.3.9 and 4.3.1}, $F$ is split-exact. Hence 
$\fC\mapsto F(\fC\sotimes \cK)$ is homotopy invariant, by Higson's homotopy invariance theorem \cite{hig}*{Theorem 3.2.2}. Specializing to $\fC=\C$,
we obtain that the functor of the proposition is homotopy invariant, excisive and stable. To prove that it is also $G$-stable, it suffices to show that if $S\subset G$ is a subgroup, then 
\[
A\mapsto K((I\otimes (A\motimes \cK\motimes B))\rtimes\cG(G/S))
\]
is $G$-stable. By \cite{corel}*{Lemma 3.2.6 and Proposition 4.2.8}  there is a weak equivalence 
\[
K((I\otimes (A\motimes \cK\motimes B))\rtimes S)\weq K((I\otimes (A\motimes \cK\motimes B))\rtimes\cG(G/S)).
\]
It is clear that $A\mapsto K((I\otimes (A\motimes \cK\motimes B))\rtimes S)$ is stable; by Lemma \ref{lem:alma} it is also $S$-stable, and therefore $G$-stable.
\end{proof}

\begin{rem}\label{rem:kstab}
Theorem \ref{thm:kstab} will be used in full generality in the proof of Theorem \ref{thm:main}. The application given in Corollary 
\ref{cor:equiK} below uses only the case $B=\C$.
\end{rem}

Consider the comparison map
\begin{equation}\label{map:kh}
K\to KH
\end{equation}
from algebraic $K$-theory to Weibel's homotopy algebraic $K$-theory \cite{kh}.

\begin{cor}\label{cor:equiK}
Let $X$ be a $G$-space. The map \eqref{map:kh} induces a weak equivalence
\[
H^G(X, K(I\otimes (A\sotimes \cK)))\to H^G(X, KH(I\otimes (A\sotimes \cK))).
\]
\end{cor}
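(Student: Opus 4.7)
The plan is to use Weibel's simplicial description $KH(R)=|[n]\mapsto K(R\otimes_\Z\Z[\Delta^n])|$, combined with the fact that the coend $H^G(X,-)=\int^{\org}X_+^H\land K(-\rtimes\cG(G/H))$ commutes with geometric realizations of simplicial spectra (since coends commute with homotopy colimits). Writing $R[\Delta^n]:=R\otimes_\Z\Z[\Delta^n]$, this yields
\[
H^G(X,KH(R))\simeq \bigl|[n]\mapsto H^G(X,K(R[\Delta^n]))\bigr|.
\]
Hence it suffices to check that, for $R=I\otimes(A\sotimes\cK)$ and every $n\geq 0$, the structure map
\[
H^G(X,K(R))\lra H^G(X,K(R[\Delta^n]))
\]
induced by $R\hookrightarrow R[\Delta^n]$ is a weak equivalence. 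If so, the simplicial spectrum is levelwise equivalent to the constant one on $H^G(X,K(R))$, and its realization is $H^G(X,K(R))$, giving the stated equivalence by composition with the canonical map $K(R)\to KH(R)$.

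For the levelwise equivalence, I would write
\[
R[\Delta^n]=I\otimes\bigl((A\sotimes\cK)\otimes_\C\C[\Delta^n]\bigr)
\]
and use the inclusion of polynomials $\C[\Delta^n]\hookrightarrow C(\Delta^n)$ into continuous functions on the topological $n$-simplex, together with the associativity and commutativity of $\sotimes$ on $\Bor$, to obtain an embedding
\[
R[\Delta^n]\hookrightarrow I\otimes\bigl((A\sotimes C(\Delta^n))\sotimes\cK\bigr).
\]
The composite $R\hookrightarrow R[\Delta^n]\hookrightarrow I\otimes((A\sotimes C(\Delta^n))\sotimes\cK)$ agrees with the map induced by the inclusion of constant functions $A\hookrightarrow A\sotimes C(\Delta^n)$, and so becomes a weak equivalence on $H^G(X,K(-))$ by the homotopy invariance statement of Theorem~\ref{thm:kstab} applied to $A\hookrightarrow A\sotimes C(\Delta^n)$.

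The hard part, therefore, is to verify that the second leg of the factorization --- the dense inclusion $R[\Delta^n]\hookrightarrow I\otimes((A\sotimes C(\Delta^n))\sotimes\cK)$ arising from $\C[\Delta^n]\hookrightarrow C(\Delta^n)$ --- also induces a weak equivalence on $H^G(X,K(-))$, so that one can \emph{cancel} with the previous step to obtain the desired equivalence $H^G(X,K(R))\iso H^G(X,K(R[\Delta^n]))$. I expect this to follow from the excisive, split-exact, and $G$-stable properties of Theorem~\ref{thm:kstab} combined with Higson's homotopy invariance theorem applied to the $\cK$-stabilization: after tensoring with $\cK$ and passing through the split-exact functor $\fC\mapsto H^G(X,K(I\otimes((A\sotimes\fC)\sotimes\cK)))$, the distinction between polynomial and continuous functions on $\Delta^n$ should collapse, since both $\C[\Delta^n]$ and $C(\Delta^n)$ are homotopy equivalent to $\C$ via evaluation at a vertex and the functor sees only the stable homotopy class.
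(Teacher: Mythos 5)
Your reduction is sound and in fact mirrors the paper's: unwinding Weibel's simplicial model of $KH$ orbit by orbit, the statement to be proved is exactly that $R\rtimes H$ is $K$-regular for $R=I\otimes(A\sotimes\cK)$ and every subgroup $H$, which is how the paper proceeds (via \cite{corel}*{Propositions 4.2.8 and 5.3} and \cite{kh}*{Proposition 1.5}). The problem is the step you yourself flag as ``the hard part'': it is not merely hard, your proposed way of handling it is circular. You want to deduce that $\E(R)\to\E(R[\Delta^n])$ is an equivalence from the facts that (i) the composite $\E(R)\to\E(R[\Delta^n])\to\E(R\sotimes C(\Delta^n))$ is an equivalence and (ii) the second leg, induced by the dense inclusion $\C[\Delta^n]\hookrightarrow C(\Delta^n)$, is an equivalence. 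But given (i), statement (ii) is \emph{equivalent} to the statement you are trying to prove: both say precisely that the ``nil'' part $\ker(\E(R[\Delta^n])\to\E(R))$ vanishes. No two-out-of-three bookkeeping can break this symmetry, and the heuristic that ``the functor sees only the stable homotopy class, so polynomial and continuous functions on $\Delta^n$ collapse'' begs the question: Higson's theorem and Theorem \ref{thm:kstab} give invariance under \emph{continuous} homotopies of $C^*$-algebras, whereas $\C[\Delta^n]$ is not a $C^*$-algebra and the contraction of $\Delta^n$ to a vertex restricted to polynomial functions is a \emph{polynomial} homotopy. Invariance under polynomial homotopies is exactly $K$-regularity, i.e.\ the conclusion.

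The missing ingredient is a genuine theorem, not a formal consequence of the listed axioms: the argument of Rosenberg, \cite{comparos}*{Theorem 3.4}, which shows that a $K$-theory functor that is continuously homotopy invariant after stabilization by $\cK$ is automatically polynomially invariant, so that $(I\otimes(A\sotimes\cK))\rtimes H$ is $K$-regular. This is what the paper invokes, together with Theorem \ref{thm:kstab} to supply the homotopy invariance of $K_*((I\otimes(-\sotimes\cK))\rtimes H)$. If you replace your step (ii) by an appeal to that argument, your proof becomes essentially the paper's proof written out in the simplicial model of $KH$; as it stands, the proposal identifies the right target but does not prove it.
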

\begin{proof} It suffices to show that the map of $\org$-spectra
\[
K((I\otimes (A\sotimes \cK))\rtimes \cG(G/H))\to KH((I\otimes (A\sotimes \cK))\rtimes \cG(G/H))
\]
is a weak equivalence. By \cite{corel}*{Propostions 4.2.8 and 5.3} this is equivalent to proving that
\[
K((I\otimes (A\sotimes \cK))\rtimes H)\to KH((I\otimes (A\sotimes \cK))\rtimes H)
\]
is an equivalence for each subgroup $H\subset G$. By \cite{kh}*{Proposition 1.5}, the map $K(R)\to KH(R)$ is an equivalence for $K$-regular $R$. Thus it suffices to show that
$(I\otimes (A\sotimes \cK))\rtimes H$ is $K$-regular. By Theorem \ref{thm:kstab}, the functor $K_*((I\otimes(-\sotimes\cK))\rtimes H)$ is homotopy invariant. It follows from this, using the argument of the proof of \cite{comparos}*{Theorem 3.4}, that $(I\otimes(A\sotimes\cK))\rtimes H$ is $K$-regular for every $A\in G$-$\Bor$.
\end{proof}

\begin{rem}\label{rem:equiK}
By \cite{balukh}*{Remark
7.4}, if $J$ is any $G$-ring, there is a canonical weak equivalence
\[
H^G(\cE(G,\fin),KH(J))\weq H^G(\cE(G,\vcyc),KH(J)).
\]
Hence in view of Corollary \ref{cor:equiK}, if $I$, $A$ and $\cK$ are as above, then the Farrell-Jones conjecture with coefficients in $J=I\otimes (A\sotimes \cK)$ is equivalent to the isomorphism conjecture for the quadruple $(G, \fin, K, J)$. 
\end{rem}

\section{Asymptotic morphisms}\label{sec:asy}
\numberwithin{equation}{subsection}
\subsection{Basic definitions}

We begin by recalling from \cite{ght} some basic definitions and facts concerning equivariant asymptotic morphisms of $C^*$-algebras. 
Let $G$ be a discrete group and let $\fB$ be a $G$-$C^*$-algebra; write $C_b([1,\infty),\fB)$ and $C_0([1,\infty),\fB)$ for the
$G$-$C^*$-algebras of bounded continuous functions and of continous functions vanishing at infinity, equipped with the induced actions. Consider the quotient
\begin{equation}\label{elQ}
Q(\fB)=C_b([1,\infty),\fB)/C_0([1,\infty),\fB).
\end{equation}
If $n\ge 0$, we write $Q^n$ for the $n$-fold composition of the functor $Q$. Let $\fA$ be another $G$-$C^*$-algebra. A $G$-equivariant \emph{$n$-asymptotic homomorphism} from $\fA$ to $\fB$ is a $G$-equivariant $*$-homomorphism $\fA\to Q^n(\fB)$. Thus a $0$-asymptotic morphism is the same as a homomorphism of $G$-$C^*$-algebras; $1$-asymptotic morphisms are simply called \emph{asymptotic morphisms}. We shall often write 
$$f:\fA-->\fB$$
for the equivariant morphism $f:\fA\to Q(\fB)$. If $f:\fA-->\fB$ is an equivariant asymptotic morphism then any set-theoretic lift $\phi:\fA\to C_b([1,\infty),\fB)$ of $f$ can be viewed as a bounded family of maps $\phi_t:\fA\to \fB$ varying continuously on $t\in [1,\infty)$ which, roughly speaking, tends to satisfy the conditions for an equivariant homomorphism as $t\to\infty$; see \cite{ght}*{Definitions 1.3 and  1.10} for details. Such a family is called an \emph{equivariant asymptotic family} representing $f$; there is a one-to-one correspondence between equivariant asymptotic homomorphisms and classes of equivariant asymptotic families up to asymptotic equivalence \cite{ght}*{Proposition 1.11}.  
An \emph{$n$-homotopy} between $G$-equivariant morphisms $f_0,f_1:\fA\to Q^n(\fB)$ is a $G$-equivariant morphism $H:\fA\to Q^n(\fB[0,1])$ such that $Q^n(\ev_i)H=f_i$ ($i=0,1$). By \cite{ght}*{Proposition 2.3}, $n$-homotopy is an equivalence relation; we write $[[\fA,\fB]]_n$ for the set of $n$-homotopy classes of $n$-asymptotic morphisms. Let $\pi:C_b([1,\infty),\fA)\to Q(\fA)$ be the projection and let 
\begin{equation}\label{map:eliota}
\iota:\fA\to C_b([1,\infty),\fA),\ \ \iota(a)(t)=a.
\end{equation}

There is a map $[[\fA,\fB]]_n\to [[\fA,\fB]]_{n+1}$ sending the class of $f:\fA\to Q^n(\fB)$ to that of $Q(f)\pi\iota:\fA\to Q^{n+1}(\fB)$; we put
\[
[[\fA,\fB]]=\colim_n[[\fA,\fB]]_n.
\] 
If $\fA$ happens to be separable, then the map $[[\fA,\fB]]_1\to [[\fA,\fB]]$ is bijective (\cite{ght}*{Theorem 2.16}). There is a category $\fQ$ whose objects are the $G$-$C^*$-algebras and where $\hom_{\fQ}(\fA,\fB)=[[\fA,\fB]]$ (\cite{ght}*{Proposition 2.12}). The composite in $\fQ$ of the classes of $f:\fA\to Q^n(\fB)$ and $g:\fB\to Q^m(\fC)$ 
is the class of $Q^n(g)f$.

\smallskip

In the next section we shall need to consider equivariant asymptotic morphisms of bornolocal $C^*$-algebras. The definition is the same as in the $C^*$-algebra case; if $A$ and $B\in G$-$\Bor$, a $G$-equivariant \emph{asymptotic morphism} from $A$ to $B$ is a $G$-equivariant morphism 
$$A\to Q(B)=C_b([1,\infty),B)/C_0([1,\infty),B).$$ 
Here $C_b([1,\infty),B)$ is the algebra of bounded continous functions with values in the normed algebra $B$. It is normed by the supremum norm, but has no obvious $C^*$-bornology; thus $Q(B)$ is just a normed $G$-$*$-algebra. As in the $C^*$-algebra case, equivariant asymptotic morphisms are in one-to-one correspondence with classes of equivariant asymptotic families up to asymptotic equivalence. The definition
of $1$-homotopy is also the same as in the $C^*$-algebra case. We do not consider $n$-asymptotic morphisms $A\to B$ for general $B\in G$-$\Bor$and $n\geq 2$.

\subsection{Applying functors to asymptotic homomorphisms}\label{subsec:apply}

We shall\\ presently show that any excisive and homotopy invariant functor from $G$-$C^*$-algebras to spectra induces a functor $\fQ\to\ho\spt$ to the homotopy category.  We begin by noting that the kernel $C_0([1,\infty),\fB)$ of the projection $\pi$ is equivariantly contractible. Hence if $\E$ is an excisive and homotopy invariant functor to spectra, then we have a natural map $\gamma_n:\E(Q^n(\fB))\to\E(\fB)$ given by
\begin{equation*}
\E(Q^n(\fB))\overset{\pi}\lweq \E(C_b([1,\infty),Q^{n-1}(\fB)))\overset{\ev_1}\to \E(Q^{n-1}(\fB))\overset{\pi}\lweq\dots\overset{\ev_1}\to \E(\fB).
\end{equation*}

Next observe that for $t\in [0,1]$ we have the commutative diagram below, where the vertical map in the middle is induced by $Q^n(\ev_t)$

\[
\xymatrix{
Q^{n+1}(\fB[0,1])\ar[d]_{Q^{n+1}(\ev_t)}& C_b([1,\infty),Q^n(\fB[0,1]))\ar[l]_(.56)\pi\ar[r]^(.6){\ev_1}\ar[d]& Q^n(\fB[0,1])\ar[d]^{Q^n(\ev_t)}\\
Q^{n+1}(\fB)& C_b([1,\infty),Q^n(\fB))\ar[l]_\pi\ar[r]^(.6){\ev_1}& Q^n(\fB)\\
}
\]
It follows that the maps $\gamma_n\E(Q^n(\ev_0))$ and $\gamma_n\E(Q^n(\ev_1)):\E(Q^n(\fB[0,1]))\to \E(\fB)$ represent the same map in $\ho\spt$. Moreover, if $f:\fA\to Q^n(\fB)$ is a homomorphism and $\iota$ is as in \eqref{map:eliota}, then we have the following commutative diagram, where the middle horizontal map is induced by $f$
\[
\xymatrix{Q(\fA)\ar[r]^{Q(f)}& Q^{n+1}(\fB)\\{C_b([1,\infty),\fA)}\ar[u]^\pi\ar[r]&{C_b([1,\infty),Q^n(\fB))}\ar[u]^\pi\ar[r]^(.6){\ev_1}&Q^{n}(\fB)\\ \fA\ar[r]_f\ar[u]^\iota& Q^n(\fB)\ar[u]^\iota\ar@{=}[ur]}
\]

Hence the maps $\gamma_{n+1}\E(Q(f)(\pi\iota))$ and $\gamma_n(\E(f))$ are the same in the homotopy category. Thus we have a well-defined map \begin{gather}\label{map:funasy}
\bar{\E}:[[\fA,\fB]]\to \ho\spt(\E(\fA),\E(\fB))\\ 
(f:\fA\to Q^n(\fB))\mapsto\gamma_n\E(f).\nonumber
\end{gather} 
One checks further that the latter map is compatible with composition, so that we have a functor 
\[
\bar{\E}:\fQ\to \ho\spt.
\]

Recall from \cite{ght}*{Theorem 6.9} that there is also an additive category $E_G$ whose objects are the $G$-$C^*$-algebras and where the homomorphisms
are given by
\[
E_G(\fA,\fB)=[[\Sigma\fA\sotimes\cK\sotimes\cK(\ell^2(G)),\Sigma\fB\sotimes\cK\sotimes\cK(\ell^2(G))]].
\]
Here $\cK=\cK(\ell^2(\N))$ and $\Sigma\fA=C_0(\R)\sotimes\fA$ is the \emph{suspension}. There is a functor\hfill
\begin{equation}\label{map:qtoe}
\fQ\to E_G
\end{equation} 
which is the identity on objects and on morphisms is induced by tensor product with $C_0(\R)\sotimes\cK\sotimes\cK(\ell^2(G))$ (see \cite{ght}*{Theorem 4.6}). We remark that
if $\E:G$-$\fC^*\to \spt$ is excisive, homotopy invariant, and $G$-stable, then 
\[
\E(\Sigma\fA\sotimes\cK\sotimes\cK(\ell^2(G)))\weq \Omega\E(\fA).
\]
Hence we can further extend $\bar{\E}$ to a functor 
\[
\overset{=}{\E}:E_G\to\ho\spt.
\]

\smallskip

\subsection{The case of equivariant \topdf{$K$}{K}-homology}

Let $X$ be a $G$-space, $C\in G$-$\Bor$ and $I$ an excisive $G$-ring. By Theorem \ref{thm:kstab}, the functor
\begin{equation}\label{fun:ele}
\E:G\emph{-}\Bor\to\spt,\quad \E(A)=H^G(X,K(I\otimes(A\motimes\cK\motimes C)))
\end{equation}
is excisive, homotopy invariant, and $G$-stable. Hence its restriction to $G$-$C^*$-algebras induces functors $\bar{\E}:\fQ\to\ho\spt$ and $\overset{=}{\E}:E_G\to \ho\spt$. 

If we apply \eqref{map:funasy} to an equivariant homomorphism $f:\fA\to Q(\fB)$, then for 
\[
\F(\fA)=H^G(X,K(I\otimes\fA))
\]
we obtain the class of the composite

\begin{equation}\label{map1}
\bar{\E}(f):\xymatrix{\F(\fA\motimes\cK\motimes C)\ar[r]^(.45){\F(f\motimes 1)}& \F(Q(\fB)\motimes\cK\motimes C)\\
\F(\fB\motimes\cK\motimes C)&\F(C_b([1,\infty),\fB)\motimes\cK\motimes C).\ar[u]^{\F(\pi\motimes 1)}_{\wr}\ar[l]^(0.6){\F(\ev_1\motimes 1)}
}
\end{equation}

Next observe that for any $G$-$C^*$-algebra $\fD$, we have an equivariant map of exact sequences

\begin{equation}\label{map:seqs}
\xymatrix{
0\to C_0([1,\infty),\fB)\motimes\fD\ar[d]^{\wr}\ar[r]&C_b([1,\infty),\fB)\motimes\fD\ar[d]^q\ar[r]&Q(\fB)\motimes\fD\ar[d]^p\to 0\\
0\to C_0([1,\infty),\fB\motimes\fD)\ar[r]&C_b([1,\infty),\fB\motimes\fD)\ar[r]&Q(\fB\motimes\fD)\to 0
}
\end{equation}
If $\fB\in G$-$C^*$ is nuclear and $(D,\cF)\in G$-$\Bor$ then $\fB\motimes D\in G$-$\Bor$, so $Q(\fB\motimes D)$ is defined. Taking the colimit of the diagrams \eqref{map:seqs} for $\fD\in\cF$ we obtain a commutative diagram with exact rows
\begin{equation}\label{map:seqs2}
\xymatrix{
0\to C_0([1,\infty),\fB)\motimes D\ar[d]^{\wr}\ar[r]&C_b([1,\infty),\fB)\motimes D\ar[d]^q\ar[r]&Q(\fB)\motimes D\ar[d]^p\to 0\\
0\to C_0([1,\infty),\fB\motimes D)\ar[r]&C_b([1,\infty),\fB\motimes D)\ar[r]&Q(\fB\motimes D)\to 0
}
\end{equation}

In particular $\F(\pi):\F(C_b([1,\infty),\fB\motimes\cK\motimes C))\to \F(Q(\fB\motimes\cK\motimes C))$ is a weak equivalence since $C_0([1,\infty),\fB)$ is contractible and
$\F(C_0([1,\infty),\fB\motimes\cK\motimes C))\cong\E(C_0([1,\infty),\fB))$. Hence we may also consider the composite 
\begin{equation}\label{map2}
\xymatrix{\F(\fA\motimes\cK\motimes C)\ar[r]^(0.45){\F(p(f\motimes 1))}& \F( Q(\fB\motimes\cK\motimes C))\\
\F(\fB\motimes\cK\motimes C)&\F(C_b([1,\infty),\fB\motimes\cK\motimes C)).\ar[u]^{\F(\pi)}_{\wr}\ar[l]^(0.6){\F(\ev_1)}
}
\end{equation}

\begin{lem}\label{lem:mismap}
The maps \eqref{map1} and \eqref{map2} belong to the same class in $\ho\spt$.
\end{lem}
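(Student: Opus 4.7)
The plan is to juxtapose the two zig-zags \eqref{map1} and \eqref{map2} into a single commutative ladder of bornolocal $C^*$-algebras and then to argue in $\ho\spt$. Setting $\fD=\cK\motimes C$, I augment the morphism of short exact sequences \eqref{map:seqs2} (taken with $D=\fD$) on the left by $f\motimes 1_\fD:\fA\motimes\fD\to Q(\fB)\motimes\fD$ and on the right by the evaluation maps at $t=1$. The resulting ladder commutes: the squares internal to \eqref{map:seqs2} commute by construction, the left augmentation column commutes by the very definition of $p(f\motimes 1)$ as $p\circ(f\motimes 1_\fD)$, and the right augmentation column commutes since both $\ev_1\circ q$ and $\ev_1\motimes 1_\fD$ send an elementary tensor $\phi\otimes d$ to $\phi(1)\otimes d$.

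Next I verify that the two ``backward'' horizontal arrows $\F(\pi\motimes 1_\fD)$ and $\F(\pi)$ become weak equivalences. The latter is the weak equivalence already recorded in the paragraph preceding the lemma. For the former, I apply Theorem \ref{thm:kstab} to the short exact sequence of $G$-$C^*$-algebras $0\to C_0([1,\infty),\fB)\to C_b([1,\infty),\fB)\to Q(\fB)\to 0$: the functor $A\mapsto\F(A\motimes\fD)=H^G(X,K(I\otimes(A\motimes\cK\motimes C)))$ is excisive, so it yields a homotopy fibration sequence, and its first term vanishes because $C_0([1,\infty),\fB)$ is contractible and $\F(-\motimes\fD)$ is homotopy invariant. (Alternatively, one may identify this kernel via the left-hand vertical isomorphism of \eqref{map:seqs2} with $C_0([1,\infty),\fB\motimes\fD)$, which is the kernel occurring in the bottom-row argument.)

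It remains to chase the resulting commutative diagram in $\ho\spt$. Let $\alpha_1$ and $\alpha_2$ denote the morphisms in $\ho\spt$ represented by the zig-zags in \eqref{map1} and \eqref{map2} respectively. Since $\F(\pi)$ and $\F(\pi\motimes 1_\fD)$ are now isomorphisms in $\ho\spt$, the commutativity relation $\F(p)\circ\F(\pi\motimes 1_\fD)=\F(\pi)\circ\F(q)$ rearranges to $\F(\pi)^{-1}\circ\F(p)=\F(q)\circ\F(\pi\motimes 1_\fD)^{-1}$. Combining this with $\F(\ev_1)\circ\F(q)=\F(\ev_1\motimes 1_\fD)$ and with functoriality $\F(p(f\motimes 1))=\F(p)\circ\F(f\motimes 1_\fD)$, a direct computation gives $\alpha_2=\F(\ev_1)\circ\F(\pi)^{-1}\circ\F(p)\circ\F(f\motimes 1_\fD)=\F(\ev_1\motimes 1_\fD)\circ\F(\pi\motimes 1_\fD)^{-1}\circ\F(f\motimes 1_\fD)=\alpha_1$. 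I do not anticipate any substantive obstacle: once the ladder is in place and the two backward arrows are inverted, the argument is a formal diagram chase.
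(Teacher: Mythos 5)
Your argument is correct and is essentially the paper's own proof, just written out in more detail: the paper likewise deduces the lemma from the map of exact sequences \eqref{map:seqs2} together with the commutative triangle expressing $\ev_1\circ q=\ev_1\motimes 1$ on $C_b([1,\infty),\fB)\motimes D$, and then inverts the two weak equivalences $\F(\pi\motimes 1)$ and $\F(\pi)$ in $\ho\spt$. The extra verifications you supply (commutativity of the augmented ladder and the excision/homotopy-invariance argument showing the backward arrows are weak equivalences) are exactly the details the paper leaves implicit.
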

\begin{proof}
Let $D=\cK\motimes C$. The lemma follows from \eqref{map:seqs2} and from the following commutative diagram
\[
\xymatrix{C_b([1,\infty),\fB)\motimes D\ar[r]^(.6){ev_1\motimes 1}\ar[dr]_q&\fB\motimes D\\
&C_b([1,\infty),\fB\motimes D)\ar[u]_{\ev_1}.}
\]
\end{proof}

The following proposition summarizes our previous discussion.

\begin{prop}\label{prop:asy}
Let $G$ be a discrete group, $X$ a $G$-space, $C\in G$-$\Bor$, and $I$ a $K$-excisive $G$-ring. Consider the functors
\begin{gather*}
\F:G\emph{-}\alg\to\spt,\\
\F(A)=H^G(X,K(I\otimes A)),\\
\E:G\emph{-}\Bor\to\spt, \\
\E(A)=\F (A\motimes\cK\motimes C).
\end{gather*}
Then the restriction of $\E$ to the category of $G$-$C^*$-algebras induces functors $\bar{\E}:\fQ\to\ho\spt$ and $\overset{=}{\E}:E_G\to\ho\spt$ from the equivariant asymptotic category and equivariant $E$-theory to the homotopy category of spectra. The diagram
\[
\xymatrix{&E_G\ar[dr]^{\overset{=}{\E}}&\\
         \fQ\ar[ur]\ar[rr]_{\bar{\E}}&&\ho\spt\\
				G\emph{-}\fC^*\ar[u]\ar[rr]_\E&& \spt\ar[u]}
\]
commutes up to natural equivalence. If $f:\fA--->\fB$ is an equivariant asymptotic homomorphism, then $\bar{\E}(f)$ is the homotopy class of the composite of diagram \eqref{map1}. If moreover $\fB$ is nuclear, then the class of the latter map is the same as that of the composite of diagram \eqref{map2}.
\end{prop}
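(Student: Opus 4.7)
The plan is to assemble the constructions already developed in Subsection \ref{subsec:apply} together with Theorem \ref{thm:kstab}. First I would verify that $\E$ meets the hypotheses of the machinery of Subsection \ref{subsec:apply}: by Theorem \ref{thm:kstab} (taking its bornolocal coefficient to be $C$), the functor $\E$ is excisive, homotopy invariant, and $G$-stable. Restricted to $G$-$C^*$-algebras, excision and homotopy invariance are precisely what is needed for \eqref{map:funasy} to produce a well-defined map $[[\fA,\fB]]\to\ho\spt(\E(\fA),\E(\fB))$, and its compatibility with composition in $\fQ$ was recorded in Subsection \ref{subsec:apply}. This yields the functor $\bar{\E}:\fQ\to\ho\spt$.

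Next I would use $G$-stability to factor $\bar{\E}$ through $E_G$. The functor \eqref{map:qtoe} is induced by tensoring with $C_0(\R)\sotimes\cK\sotimes\cK(\ell^2(G))$, and, as noted in the paragraph containing \eqref{map:qtoe}, when $\E$ is excisive, homotopy invariant and $G$-stable there is a natural weak equivalence $\E(\Sigma\fA\sotimes\cK\sotimes\cK(\ell^2(G)))\weq\Omega\E(\fA)$. Composing $\bar{\E}$ with a chosen inverse to this equivalence defines $\overset{=}{\E}:E_G\to\ho\spt$. The commutativity of the three faces of the displayed diagram is then built into the construction: a strict homomorphism is a $0$-asymptotic morphism, on which $\gamma_0\E(f)=\E(f)$ by definition; and the triangle $\fQ\to E_G\to\ho\spt$ commutes by the very choice of inverse used to define $\overset{=}{\E}$.

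For the explicit formula on an equivariant asymptotic morphism $f:\fA\to Q(\fB)$, I would unwind \eqref{map:funasy} at $n=1$ to identify $\bar{\E}(f)=\gamma_1\E(f)$ with the composite \eqref{map1}; the arrow $\F(\pi\motimes 1)$ there is a weak equivalence because the kernel $C_0([1,\infty),\fB)\motimes\cK\motimes C$ is equivariantly contractible, as already observed in the remarks around \eqref{map:seqs2}. Finally, when $\fB$ is nuclear the equality of the classes of \eqref{map1} and \eqref{map2} in $\ho\spt$ is exactly the content of Lemma \ref{lem:mismap}.

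The only point that would require care --- and which I would call the main (potential) obstacle --- is the verification that the assignment \eqref{map:funasy} is independent of the chosen lift of a class in $[[\fA,\fB]]$ and compatible with the telescoping $f\mapsto Q(f)\pi\iota$. Both matters were dealt with in Subsection \ref{subsec:apply} by the homotopies arising from $\gamma_n\E(Q^n(\ev_t))$, so no further argument beyond invoking that material is needed to conclude.
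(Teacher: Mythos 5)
Your proposal is correct and follows essentially the same route as the paper's own proof: the proposition is deduced by invoking Theorem \ref{thm:kstab} to verify the hypotheses of the construction in Subsection \ref{subsec:apply} (excision, homotopy invariance, and $G$-stability for the extension to $\fQ$ and $E_G$ respectively), and Lemma \ref{lem:mismap} for the identification of \eqref{map1} with \eqref{map2} in the nuclear case. The additional detail you supply about well-definedness of \eqref{map:funasy} is indeed the material already recorded in that subsection, so nothing further is needed.
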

\begin{proof}
We showed in Section \ref{subsec:apply} that any excisive and homotopy invariant functor $G$-$\fC^*\to \spt$ extends to a functor $\fQ\to\ho\spt$, and moreover to $E_G\to\ho\spt$ if in addition the functor is $G$-stable. By Theorem
\ref{thm:kstab}, this applies to the functor $\E$. The equivalence between the maps \eqref{map1}
and \eqref{map2} is established by Lemma \ref{lem:mismap}.
\end{proof}

\smallskip
\begin{ex}\label{ex:tilde}
Let $\fA$ be a $C^*$-algebra. For $a\in C_0(\R,\fA)$ and $t\in [1,\infty)$, put
\begin{equation}\label{map:phi0}
\phi_0:C_0(\R,\fA)\to C_b([1,\infty),C_0(\R,\fA)),\hspace{0.30 cm} \phi_0(a)(t)(x)=a(x/t).
\end{equation}
Let $\fB$ be another $C^*$-algebra and let 
\[
f_0:C_0(\R,\fA)\to \fB
\]
be a $*$-homomorphism. Consider the map
\[
\hat{f_0}:C_0(\R,\fA)\to Q(\fB),\quad \hat{f_0}(a)=\pi(f_0 \phi_0(a)). 
\]
Assume that $\fB$ is nuclear, and let $C\in\Bor$. Then under the isomorphism
\[
C_0(\R,\fA)\motimes C\cong C_0(\R,\fA\motimes C),
\]
the map $p(\hat{f_0}\motimes 1)$ identifies with $\widehat{f_0\motimes 1}$. Thus if 
$\fA,\fB\in G$-$\fC^*$, $C\in G$-$\Bor$, and $\hat{f_0}$ is $G$-equivariant, then, writing 
$1$ for the identity map of $\cK\motimes C$, we have that $\widehat{f_0\motimes 1}$
is $G$-equivariant, and $\bar{\E}(\hat{f_0})$ is equivalent to the composite
\begin{equation}\label{map3}
\xymatrix{\F(C_0(\R,\fA\motimes\cK\motimes C))\ar[r]^(0.5){\F(\widehat{f_0\motimes 1})}& \F( Q(\fB\motimes\cK\motimes C))\\
\F(\fB\motimes\cK\motimes C)&\F(C_b([1,\infty),\fB\motimes\cK\motimes C)).\ar[u]^{\F(\pi)}_{\wr}\ar[l]^(0.57){\F(\ev_1)}
}
\end{equation}
\end{ex}

Now let $A,B,C\in G$-$\Bor$ with $B$ nuclear. Formula \eqref{map:phi0} defines a homomorphism $C_0(\R,A)\to C_b([1,\infty),C_0(\R,A))$, which restricts to
\[
\phi_c:C_c(\R,A)\to C_b([1,\infty),C_c(\R,A)).
\]
Let $\#\in\{0,c\}$ and let $f_\#:C_\#(\R,A)\to B$ be a $*$-homomorphism. Put 
\begin{equation}\label{map:ftilde}
\hat{f_\#}:C_\#(\R,A)\to Q(B),\quad \hat{f_\#}(a)=\pi(f_\#\phi_\#(a)).
\end{equation}
Assume that $\hat{f_\#}$ is $G$-equivariant; write $1$ for the identity map of $\cK\motimes C$. Then $\widehat{f_\#\motimes 1}$ is again $G$-equivariant. Moreover, by Proposition \ref{prop:exci} and Theorem \ref{thm:kstab}, the map
\[
\F(\pi):\F(C_b([1,\infty),B\motimes\cK\motimes C))\to \F( Q(B\motimes\cK\motimes C))
\]
is a weak equivalence. 
Let
$\bar{\E}(\hat{f_\#})$ be the composite
\begin{equation}\label{map4}
\bar{\E}(\hat{f_\#}):\xymatrix{\F(C_\#(\R,A\motimes\cK\motimes C))\ar[r]^(0.55){\F(\widehat{f_\#\motimes 1})}& \F( Q(B\motimes\cK\motimes C))\\
\F(B\motimes\cK\motimes C)&\F(C_b([1,\infty),B\motimes\cK\motimes C)).\ar[u]^{\F(\pi)}_{\wr}\ar[l]^(0.6){\F(\ev_1)}
}
\end{equation}

In the next section we shall need the following trivial observation.

\begin{lem}\label{lem:conmuta}
Let $i:A\to A'$, $j:B\to B'\in G$-$\Bor$ and let $f_c:C_c(\R,A)\to B$ and $f_0:C_0(\R,A')\to B'$ be $*$-homomorphisms. Assume that $B$ and $B'$ are nuclear and that
the diagram 
\[
\xymatrix{
C_0(\R,A')\ar[r]^(.6){f_0}& B'\\
C_c(\R,A)\ar[u]^i\ar[r]_(.6){f_c}& B\ar[u]_j
}
\]
commutes. Further assume that $\hat{f_0}$ and $\hat{f_c}$ are $G$-equivariant. Let $\E$ be as in Proposition \ref{prop:asy} and let 
$\bar{\E}(\hat{f_\#})$ be as in \eqref{map4} ($\#\in\{0,c\}$). Then the diagram
\[
\xymatrix{
\E(C_0(\R,A'))\ar[r]^(.6){\bar{\E}(\hat{f_0})}& \E(B')\\
\E(C_c(\R,A))\ar[u]^{\E(i)}\ar[r]_(.6){\bar{\E}(\hat{f_c})}& \E(B)\ar[u]_{\E(j)}
} 
\]
is homotopy commutative.
\end{lem}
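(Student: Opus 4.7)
The plan is to reduce the claim to a straightforward naturality check by unpacking the definition of $\bar{\E}(\hat{f_\#})$ from \eqref{map4} and then verifying that each of its three constituent pieces fits into a commutative square relating the cases $\#=c$ and $\#=0$. Writing $D=\cK\motimes C$ for brevity, the map $\bar{\E}(\hat{f_\#})$ is the composite
\[
\F(C_\#(\R,A\motimes D))\xrightarrow{\F(\widehat{f_\#\motimes 1})}\F(Q(B\motimes D))\xleftarrow[\sim]{\F(\pi)}\F(C_b([1,\infty),B\motimes D))\xrightarrow{\F(\ev_1)}\F(B\motimes D),
\]
where the middle map is a weak equivalence by Proposition \ref{prop:exci} together with Theorem \ref{thm:kstab} and the contractibility of $C_0([1,\infty),-)$. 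So homotopy commutativity of the target square follows once we produce three compatible strictly commutative squares at the level of algebras.

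First, the formula $\phi_\#(a)(t)(x)=a(x/t)$ is natural in the algebra argument; in particular $C_b([1,\infty),i\motimes 1)\circ \phi_c=\phi_0\circ(i\motimes 1)$ after identifying $C_\#(\R,A)\motimes D\cong C_\#(\R,A\motimes D)$ (using nuclearity of $B, B'$ and the filtered-colimit description of $\motimes$). Composing with the hypothesis $(j\motimes 1)\circ(f_c\motimes 1)=(f_0\motimes 1)\circ(i\motimes 1)$ and applying $\pi$ yields the strict commutativity
\[
Q(j\motimes 1)\circ\widehat{f_c\motimes 1}=\widehat{f_0\motimes 1}\circ(i\motimes 1).
\]
Second, $\pi$ and $\ev_1$ are natural transformations of functors $G\emph{-}\Bor\to G\emph{-}\Bor$, so the two squares obtained by applying them to $j\motimes 1:B\motimes D\to B'\motimes D$ commute on the nose.

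Applying $\F$ to the resulting commutative ladder and inverting the middle equivalences produces the desired homotopy commutative diagram. The only point requiring a little care is the compatible inversion of the two copies of $\F(\pi)$; this is automatic because the naturality square for $\pi$ ensures that $\F(\pi)\circ\F(j\motimes 1)=\F(j\motimes 1)\circ\F(\pi)$ as morphisms in $\spt$, so any choice of homotopy inverse on one side transports to a compatible choice on the other in $\ho\spt$. I expect no real obstacle here: the entire argument is a diagram chase once one has noted that all three building blocks of $\bar{\E}(\hat{f_\#})$ are natural in the algebra variable, with the only nontrivial input being the identification $C_\#(\R,A)\motimes D\cong C_\#(\R,A\motimes D)$, which holds by nuclearity of the copy of $\C$ governing $C_\#(\R,-)$ (so that $\motimes$ agrees with $\sotimes$ on this factor) combined with \eqref{eq:extendfun}.
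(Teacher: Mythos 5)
Your proof is correct: the paper states this lemma as a ``trivial observation'' and omits the argument entirely, and your unpacking of \eqref{map4} into three strictly commutative naturality squares (for $\phi_\#$, $\pi$, and $\ev_1$), followed by inverting the weak equivalences $\F(\pi)$ in $\ho\spt$, is exactly the intended verification. Your attention to the implicit identification $C_\#(\R,A)\motimes\cK\motimes C\cong C_\#(\R,A\motimes\cK\motimes C)$ is also consistent with how the paper uses it in Example \ref{ex:tilde}.
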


\section{A Dual Dirac element}\label{sec:dirac}

\numberwithin{equation}{section}

The purpose of this section is to prove a compactly supported variant of a theorem of Higson and Kasparov (\cite{hk2}*{Theorem 6.10}). We start by recalling some material from \cite{hk1}, \cite{hk2}, and \cite{hkt}. A \emph{Euclidean space} is a real pre-Hilbert space. Let $V$ be a countably infinite dimensional Euclidean space. Write $\cF(V)$ for the set of finite dimensional affine subspaces of $V$. For $S \in \cF(V)$ put $S^0=\left\{s_1 - s_2 : s_i \in S\right\}$. Write $\Cliff(S)$ for the complexified Clifford algebra of $S^0$. As usual we use the subscripts $c$ and $0$ to indicate compactly supported functions and functions vanishing at infinity. For $\#\in\{c,0\}$, put
\[
\cC_\#(S)=C_\#(S,\Cliff(S)).
\]
Observe that the $\Z/2\Z$-grading on $\Cliff(S)$ induces one on $\cC_\#(S)$. For example $\Cliff(\R)=\C\oplus u\C$ where $u$ is a degree one element satisfying $u^2=1$. Thus 
\[
\cC_\#(\R)=C_\#(\R)\oplus u C_\#(\R).
\]
We regard $\cC_\#(\R)$ as a $\Z/2\Z$-graded algebra with homogeneous components $\cC_\#(\R)_j=u^jC_\#(\R)$  ($j=0,1$). In addition the algebra $C_\#(\R)$ is also $\Z/2\Z$-graded according to even and odd functions. For $f\in C_\#(\R)$ write $f=f^\even+f^\odd$ for its even-odd decomposition. One checks that the map
\begin{equation}\label{map:elteta}
\theta:C_\#(\R)\to \cC_\#(\R), \hspace{0.5 cm}\theta(f)=f^{\even}+uf^{\odd}
\end{equation}
is a homogeneous isometric embedding. Let $X\in C(\R)$ be the identity function. We may interpret $\theta$ as the functional calculus of the degree one, essentially self-adjoint, unbounded operator of multiplication by $Xu\in C(\R,\Cliff(\R))$; we have
\begin{equation}\label{funcalc}
\theta(f)=f(Xu).
\end{equation} 
We will identify $C_\#(\R)=\theta(C_\#(\R))$.
Consider the graded spatial tensor product
\begin{equation}\label{A(S)}
\cA_\#(S)=C_\#(\R)\gotimes\cC_\#(S).
\end{equation}
Using the identification above, we may regard $\cA_\#(S)$ as a subalgebra of $C_\#(\R\times S,\Cliff(\R\oplus S^0))$. We have
\begin{multline}\label{ida(s)}
\cA_\#(S)=\\
\{f=f^0+uf^1\in C_\#(\R\times S,\Cliff(\R\oplus S^0)): f^i(-t,s)=(-1)^if^i(t,s)\}.
\end{multline}
If $S_1 \subset S_2 \in \cF(V)$, define $S_{21}=S_2^0\ominus S_1^0$ and write $S_2=S_1+ S_{21}$. Then 
$\cA_\#(S_2)= \cA_\#(S_{21})\gotimes\cC_\#(S_1)$. Following \cite{hkt}, we write $C_{21}:S_{21}\to \Cliff(S_{21})$ for the inclusion and 
$X\in C(\R)$ for the identity function, considered as degree one, essentially self-adjoint, unbounded multipliers of $\cC_0(S_{21})$ and $C_0(\R)$, with domains $\cC_c(S_{21})$ and $C_c(\R)$. Using functional calculus, one obtains a map
\begin{equation}\label{map:elbeta}
\beta_{21}: \cA_0(S_1) \to \cA_0(S_2), \hspace{0.5 cm}\beta_{21}(f\gotimes g)=f(X\gotimes 1+1\gotimes C_{21})\gotimes g.
\end{equation}

\begin{lem}\label{lem:betamaps}
Let $v\in S_1\subset S_2\in\cF(V)$, $\rho>0$, $f\in \cAc(S_1)$ with 
$\supp(f)\subset D^1((0,v),\rho)$, the closed ball in $\R\times S_1$. Then 
$\supp(\beta_{21}(f))\subset D^2((0,v),\rho)$, the closed ball in $\R\times S_2$. In particular the map \eqref{map:elbeta} sends $\cAc(S_1)$ to $\cAc(S_2)$. 
\end{lem}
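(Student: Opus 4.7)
The plan is to derive an explicit pointwise formula for $\beta_{21}(f)$ using functional calculus, and then read the support bound directly off that formula.

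First I would analyse $\beta_{21}$ on a simple tensor $f = h \gotimes g$ with $h \in C_c(\R)$ and $g \in \cC_c(S_1)$. By \eqref{map:elbeta}, $\beta_{21}(h \gotimes g) = h(D) \gotimes g$, where $D = X \gotimes 1 + 1 \gotimes C_{21}$ acts pointwise at $(t', s_{21}) \in \R \times S_{21}$ as the odd self-adjoint Clifford element $t'u + C_{21}(s_{21}) \in \Cliff(\R \oplus S_{21}^0)$. Since $u$ and $C_{21}(s_{21})$ are odd elements of orthogonal subspaces of $\R \oplus S_{21}^0$ they anticommute, so this element squares to the scalar $R^2 := (t')^2 + |s_{21}|^2$. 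Standard functional calculus for an odd self-adjoint element with positive square then gives, for $R > 0$,
\[
h\bigl(t'u + C_{21}(s_{21})\bigr) \;=\; h^{\even}(R) \,+\, \frac{t'u + C_{21}(s_{21})}{R}\, h^{\odd}(R),
\]
where $h = h^{\even} + h^{\odd}$ is the even--odd decomposition, and this expression extends continuously to $R = 0$. Writing any $f \in \cAc(S_1)$ in the form $f = f^0 + u f^1$ with $f^0, f^1 \colon \R \times S_1 \to \Cliff(S_1^0)$ of the $t$-parity dictated by \eqref{ida(s)}, I would then extend the formula $\C$-linearly to
\[
\beta_{21}(f)(t', s_1, s_{21}) \;=\; f^0(R, s_1) \,+\, \frac{t'u + C_{21}(s_{21})}{R}\, f^1(R, s_1), \qquad R = \sqrt{(t')^2 + |s_{21}|^2}.
\]
Both sides are bounded $\C$-linear in $f$, agree on simple tensors, and simple tensors span a dense subspace of $\cA_0(S_1)$ for the spatial $C^*$-norm, so the identity is valid on all of $\cA_0(S_1)$ and in particular on $\cAc(S_1)$.

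Next I would observe that the right-hand side vanishes whenever $f^0(R, s_1) = f^1(R, s_1) = 0$, equivalently whenever $f(R, s_1) = 0$; the parity relation \eqref{ida(s)} then forces $f(-R, s_1) = 0$ as well. By continuity of $f$, if $(R_0, s_{0,1}) \notin \supp f$ then $\beta_{21}(f)$ vanishes on an open neighbourhood of every triple $(t_0', s_{0,1}, s_{0,21})$ with $(t_0')^2 + |s_{0,21}|^2 = R_0^2$, so
\[
\supp \beta_{21}(f) \;\subset\; \bigl\{(t', s_1, s_{21}) : \bigl(\sqrt{(t')^2 + |s_{21}|^2},\, s_1\bigr) \in \supp f\bigr\}.
\]

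Now I would feed in the hypothesis $\supp f \subset D^1((0, v), \rho)$. If $(R, s_1) \in \supp f$ then $R^2 + |s_1 - v|^2 \le \rho^2$. Since $v \in S_1$, the vector $s_1 - v$ lies in $S_1^0$ and is orthogonal to $s_{21} \in S_{21} = S_2^0 \ominus S_1^0$, so Pythagoras in $S_2^0 = S_1^0 \oplus S_{21}$ gives $|s_1 + s_{21} - v|^2 = |s_1 - v|^2 + |s_{21}|^2$. Hence
\[
(t')^2 + |s_1 + s_{21} - v|^2 \;=\; (t')^2 + |s_{21}|^2 + |s_1 - v|^2 \;=\; R^2 + |s_1 - v|^2 \;\le\; \rho^2,
\]
which says precisely that $(t', s_1 + s_{21}) \in D^2((0, v), \rho)$, proving the support containment. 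Since $S_2$ is finite dimensional the ball $D^2((0, v), \rho)$ is compact, so $\beta_{21}(f) \in \cA_0(S_2)$ with support in a compact set automatically lies in $\cAc(S_2)$, giving the ``in particular'' statement.

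The main obstacle I expect is justifying the explicit pointwise formula for $\beta_{21}(f)$ on a general $f$: one has to keep track of the graded tensor sign conventions in $\Cliff(\R \oplus S_2^0) = \Cliff(\R \oplus S_{21}^0) \gotimes \Cliff(S_1^0)$ and verify that evaluation at a fixed point is genuinely continuous for the spatial $C^*$-norm on $\cA_0(S_1)$. Once that formula is nailed down, the support bound is essentially Pythagoras.
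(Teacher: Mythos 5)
Your proof is correct and follows essentially the same route as the paper's: the paper's one-line argument rests on the formula $\beta_{21}(f)(x,s_2)=f(xu+s_{21},s_1)$ (equation \eqref{formubeta}) together with the observation that the spectrum of $xu+s_{21}$ is $\{\pm\sqrt{x^2+\|s_{21}\|^2}\}$, which is exactly the functional-calculus identity you derive explicitly, followed by the same Pythagorean computation. Your write-up merely fills in the details (even/odd decomposition, density of simple tensors, continuity at $R=0$) that the paper leaves implicit.
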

\begin{proof} It follows from the fact that if $s_2$ decomposes as $s_2=s_1+s_{21}\in S_1+S_{21}$ then 
\begin{equation}\label{formubeta}
\beta_{21}(f)(x,s_2)=f(xu+s_{21},s_1),
\end{equation}
and that for each $x$, the spectrum of $xu+s_{21}$ is $\{\pm \sqrt{x^2+||s_{21}||^2}\}$. \end{proof}

\begin{rem}\label{rem:betamono}
It follows from \eqref{map:elteta}, \eqref{funcalc}, and \eqref{formubeta}, that the map \eqref{map:elbeta} is injective. 
\end{rem}

By \cite{hkt}*{Proposition 3.2}, if $S_1\subset S_2\subset S_3$, then $\beta_{31}=\beta_{32}\beta_{21}$. 
Let $\cA_0(V)$ be the $C^*$-algebra colimit of the direct system $\{\beta_{TS}:\cA_0(S)\to \cA_0(T)\}$. Also let
\[
\cAc(V)=\colim_{\cF(V)}\cAc(S)
\]
be the algebraic colimit; by Remark \ref{rem:betamono} this is the colimit in $\Bor$. 
We have a commutative diagram
\begin{equation}\label{diag:betas}
\xymatrix{
\cA_0(0)\ar[r]^{\beta_0}&\cA_0(V)\\
\cAc(0)\ar[u]\ar[r]_{\beta_c}&\cAc(V)\ar[u]
}
\end{equation}

Now let $G$ be a discrete group acting on $V$ by affine isometries. Then for each $g\in G$ there are an orthogonal transformation $\ell(g)$ and a vector $\tau(g)\in V$ such that 
\begin{equation}\label{ellytau}
g\cdot v=\ell(g)(v)+\tau(g)\qquad (v\in V).
\end{equation}
The $G$-action on $V$ induces an action on $\cA_\#(V)$ defined as follows
\[
(g\cdot f)(v)=\ell(g)(f(g^{-1}\cdot v)).
\]
We regard $\cA_\#(0)$ and $\cA_\#(V)$ as $G$-algebras with the trivial and the induced action, respectively. 

In general, the map $\beta_\#:\cA_\#(0)\to \cA_\#(V)$ is not $G$-equivariant; however this can be fixed asymptotically. Indeed the asymptotic homorphism
\[
\hat{\beta}_\#:\cA_\#(0)--->\cA_\#(V)
\]
defined by \eqref{map:ftilde} is $G$-equivariant.

The following proposition is an immediate consequence of a theorem of Higson and Kasparov and of the results of the previous section.

\begin{prop}\label{prop:bott} (cf. \cite{hk2}*{Theorem 6.8}).
Let $G$ be a countable discrete group acting on $V$ by affine isometries. Let $X$ be a $G$-space, let $I$ be a $K$-excisive $G$-ring, and let $B\in G$-$\Bor$. Consider the functor
\[
\E_X:G\emph{-}\Bor\to\spt, \quad \E_X(A)=H^G(X,K(I\otimes (A\motimes\cK\motimes B))).
\] 
Then the map $\bar{\E}_X(\hat{\beta_{c}})$ defined in \eqref{map4} is a split monomorphism in $\ho\spt$.
\end{prop}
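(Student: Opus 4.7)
My strategy is to combine Higson--Kasparov's invertibility of the dual Dirac element with Lemma~\ref{lem:conmuta} applied to the naturality square~\eqref{diag:betas}. By Higson--Kasparov (\cite{hk2}*{Theorem 6.8}), the class $[\hat{\beta_0}]\in E_G(C_0(\R),\cA_0(V))$ is invertible; since Proposition~\ref{prop:asy} shows that on $G$-$C^*$-asymptotic morphisms $\bar{\E}_X$ factors as $\overset{=}{\E}_X$ applied to the image in $E_G$, the map $\bar{\E}_X(\hat{\beta_0})$ is an isomorphism in $\ho\spt$.

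Apply Lemma~\ref{lem:conmuta} to~\eqref{diag:betas} with $A=A'=\C$, $f_c=\beta_c$, $f_0=\beta_0$, $B=\cAc(V)$, $B'=\cA_0(V)$ (both nuclear, as each $\cA_\#(S)$ is a Clifford-valued function algebra on a finite-dimensional affine space). One obtains a homotopy commutative square
\[
\xymatrix{
\E_X(C_c(\R))\ar[d]_{\E_X(i)}\ar[r]^{\bar{\E}_X(\hat{\beta_c})}&\E_X(\cAc(V))\ar[d]^{\E_X(j)}\\
\E_X(C_0(\R))\ar[r]_{\bar{\E}_X(\hat{\beta_0})}&\E_X(\cA_0(V))
}
\]
with $i,j$ the canonical inclusions. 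Setting $r:=\bar{\E}_X(\hat{\beta_0})^{-1}\circ\E_X(j)$, commutativity yields $r\circ\bar{\E}_X(\hat{\beta_c})=\E_X(i)$ in $\ho\spt$, so a left homotopy inverse for $\E_X(i)$ produces one for $\bar{\E}_X(\hat{\beta_c})$.

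To prove $\E_X(i)$ is a weak equivalence, I would exploit the presentation $C_c(\R)=\colim_n C_0((-n,n))$ in $\Bor$ along extension-by-zero injections. The functor $\E_X$ commutes with such filtered colimits (the coend over $\org$, algebraic $K$-theory, $I\otimes(-)$, and $\motimes$ all preserve filtered colimits along injections), and each structure map $C_0((-n,n))\hookrightarrow C_0(\R)$ of trivially $G$-acted $C^*$-algebras is an $E_G$-equivalence, both source and target being models of $\Sigma\C$. Proposition~\ref{prop:asy} sends these to $\ho\spt$-equivalences, and passing to the colimit identifies $\E_X(i)$ as an equivalence. The main obstacle I anticipate is precisely this last step: concretely verifying the $E_G$-equivalence of the extension-by-zero maps $C_0((-n,n))\to C_0(\R)$ (a Bott-type calculation) and ensuring the compatibility of $\E_X$ with the $\Bor$-colimit. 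Should the colimit compatibility fail, an alternative would be to construct a $G$-equivariant asymptotic morphism $C_0(\R)\to Q(C_c(\R))$ by a pointwise cut-off, extend $\bar{\E}_X$ to it in the spirit of~\eqref{map4}, and verify directly that the resulting map retracts $\E_X(i)$.
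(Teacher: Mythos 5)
Your reduction is exactly the paper's: invoke Higson--Kasparov to make $\bar{\E}_X(\hat{\beta_0})$ invertible, apply Lemma~\ref{lem:conmuta} to the square \eqref{diag:betas}, and thereby reduce the proposition to showing that $\E_X$ sends the inclusion $i:C_c(\R)=\cAc(0)\hookrightarrow\cA_0(0)=C_0(\R)$ to a weak equivalence. That part of your argument, including the formal manipulation producing a left inverse of $\bar{\E}_X(\hat{\beta_c})$ from one of $\E_X(i)$, is correct and is how the paper proceeds.

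The gap is the step you yourself flag as the ``main obstacle'': you never actually prove that $\E_X(i)$ is an equivalence, and the route you sketch is both incomplete and unnecessarily heavy. You propose to show that each extension-by-zero map $C_0((-n,n))\to C_0(\R)$ is an $E_G$-equivalence and then push it through Proposition~\ref{prop:asy}; but ``both source and target are models of $\Sigma\C$'' does not by itself show that this particular map is an equivalence, and no Bott-type computation is needed. The direct argument: $C_0(-\rho,\rho)\triqui C_0(\R)$ is a (bornolocal) ideal with quotient $C_0(\R)/C_0(-\rho,\rho)\cong C_0\bigl((-\infty,-\rho]\cup[\rho,\infty)\bigr)$, which is equivariantly contractible via $H(f)(s,t)=f(t/s)$; since $\E_X$ is excisive and homotopy invariant (Theorem~\ref{thm:kstab}), each inclusion $\E_X(C_0(-\rho,\rho))\to\E_X(C_0(\R))$ is a weak equivalence, and passing to the colimit over $\rho$ (using that $\E_X$ commutes with filtering colimits along injections, which holds because algebraic $K$-theory and the coend do) identifies $\E_X(C_c(\R))\weq\E_X(C_0(\R))$. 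With that substitution your proof closes; your fallback suggestion of building an asymptotic morphism $C_0(\R)\to Q(C_c(\R))$ by cut-offs is not needed.
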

\begin{proof}
Put $\E=\E_X:G$-$\Bor\to\spt$. By Proposition \ref{prop:asy} and \cite{hk2}*{Theorems 6.8 and 6.11}, the functor \eqref{map:qtoe} sends the class of $\hat{\beta_0}$ to an isomorphism in $E_G$. Hence in view of \eqref{diag:betas} and of Lemma \ref{lem:conmuta} it suffices to show that $\E$ sends the inclusion $C_c(\R)=\cAc(0)\to \cA_0(0)=C_0(\R)$ to a weak equivalence. Because $\E$ commutes up to weak homotopy equivalence with filtering colimits along injective maps, the natural map $\colim_{\rho>0}\E(C_0(-\rho,\rho))\to \E(C_c(\R))$ is a weak equivalence. For each $\rho>0$, $C_0(-\rho,\rho)\triqui C_0(\R)$ is an ideal 
and the quotient $C_0(\R)/C_0(-\rho,\rho)\cong C_0((-\infty,-\rho]\cup[\rho,\infty))$ is contractible; indeed $H(f)(s,t)=f(t/s)$ is a contraction. 
Thus because the functor $\E$ is homotopy invariant and excisive, $\E(C_0(-\rho,\rho))\to \E(C_0(\R))$ is a weak equivalence. Hence we have a weak equivalence
\begin{equation}\label{map:ccc0}
\E(C_c(\R))\weq \E(C_0(\R)).
\end{equation}
This finishes the proof. 
\end{proof}

\begin{cor}\label{cor:bott}
Let $Y$ be another $G$-space, and $f:X\to Y$ an equivariant map. Let $\tau:\E_X\to \E_Y$ be the natural map induced by $f$. Assume that $\tau(\cAc(V))$ is a weak equivalence. Then $\tau(\C)$ is a weak equivalence too.
\end{cor}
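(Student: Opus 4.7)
The plan is two-step: first, show that $\tau(C_c(\R))$ is a weak equivalence via a retract argument using Proposition~\ref{prop:bott}; then, pass from $C_c(\R)$ to $\C$ by a suspension argument.

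For the first step, the proof of Proposition~\ref{prop:bott} provides an explicit left inverse for $\bar{\E}_X(\hat{\beta_c})$ in $\ho\spt$, namely the composite
\[
r_X\colon \E_X(\cAc(V)) \xrightarrow{\E_X(\iota)} \E_X(\cA_0(V)) \xrightarrow{h_X^{-1}} \E_X(C_c(\R)),
\]
where $\iota\colon\cAc(V)\hookrightarrow \cA_0(V)$ is the inclusion and $h_X := \bar{\E}_X(\hat{\beta_0})\circ \E_X(C_c(\R)\hookrightarrow C_0(\R))$ is a weak equivalence by Higson--Kasparov's theorem combined with the contractibility computation yielding $\E_X(C_c(\R)) \weq \E_X(C_0(\R))$. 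The analogous retraction $r_Y$ is defined on the $Y$-side.

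The key point, which is the main technical step of the argument, is that $r_X$ is natural in $X$: the map $\E_X(\iota)$ is $\E$ applied to an algebra morphism and hence strictly natural; the map $\bar{\E}_X(\hat{\beta_0})$ is natural because the assignment \eqref{map:funasy} depends functorially on $\E$; and inverting a weak equivalence in $\ho\spt$ preserves commutativity of squares. Consequently the square
\[
\xymatrix{
\E_X(\cAc(V)) \ar[r]^{r_X} \ar[d]_{\tau(\cAc(V))} & \E_X(C_c(\R)) \ar[d]^{\tau(C_c(\R))} \\
\E_Y(\cAc(V)) \ar[r]_{r_Y} & \E_Y(C_c(\R))
}
\]
commutes in $\ho\spt$. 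Together with the naturality square for $\bar{\E}(\hat{\beta_c})$ and the identities $r_X \circ \bar{\E}_X(\hat{\beta_c}) = \mathrm{id}$ and $r_Y\circ\bar{\E}_Y(\hat{\beta_c}) = \mathrm{id}$, this exhibits $\tau(C_c(\R))$ as a retract of $\tau(\cAc(V))$ in the arrow category of $\ho\spt$. Since weak equivalences of spectra are closed under retracts, $\tau(C_c(\R))$ is a weak equivalence.

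For the second step, Theorem~\ref{thm:kstab} gives that $\E_X$ and $\E_Y$ are excisive and homotopy invariant on $G$-$\Bor$, so the standard suspension argument (applied to the short exact sequence $0 \to C_0(\R) \to C_0([0,\infty)) \to \C \to 0$ with contractible middle term) yields a natural (in $X$) equivalence $\E_\bullet(C_0(\R)) \simeq \Omega\,\E_\bullet(\C)$. Combined with the weak equivalence $\E_\bullet(C_c(\R)) \weq \E_\bullet(C_0(\R))$ from the proof of Proposition~\ref{prop:bott}, this identifies $\tau(C_c(\R))$ with $\Omega\,\tau(\C)$ in $\ho\spt$; since $\Omega$ detects weak equivalences of nonconnective spectra, $\tau(\C)$ is a weak equivalence.
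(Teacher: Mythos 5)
Your proof is correct and follows essentially the same route as the paper: the paper likewise identifies $\tau(\C)$ with the delooping of $\tau(C_0(\R))\simeq\tau(C_c(\R))$ and then invokes Proposition~\ref{prop:bott} together with the fact that a retract of an isomorphism is an isomorphism. The only difference is expository --- you spell out the explicit left inverse $r_X$ and its naturality in $X$, which the paper leaves implicit.
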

\begin{proof}
By excision and homotopy invariance, $\tau(\C)$ is equivalent to the delooping of $\tau(C_0(\R))$ in $\ho\spt$. By \eqref{map:ccc0} the latter map is equivalent to $\tau(C_c(\R))$. The corollary now follows from the proposition above and the fact that a retract of an isomorphism is an isomorphism.
\end{proof}

\section{Proper actions}\label{sec:proper}
Let $G$ be a discrete group. If $J\in G$-$\ring$ is commutative but not necessarily unital and $I\in G$-$\ring$, then by a {\em compatible $(G,J)$-algebra structure} on
$I$ we understand a $J$-bimodule structure on $I$ such
that the following identities hold for $a,b\in I$, $c\in J$, and $g\in
G$:
\begin{equation}\label{condipropertriv}
\begin{gathered}
c\cdot a=a\cdot c,\\
c\cdot(ab)=(c\cdot a)b=a(c\cdot b),\\
g(c\cdot a)=g(c)\cdot g(a).\\
\end{gathered}
\end{equation}
If $I$ and $J$ are $*$-$\C$-algebras we will additionally require the following two conditions
\begin{equation}\label{condiproperstar}
(\lambda c)\cdot a=c\cdot (\lambda a),\ \ 
(c\cdot a)^*=c^*\cdot a^* \quad (\lambda\in \C,\ \ c\in J,\ \ a\in I).
\end{equation}
If moreover $I$ and $J$ are normed, we will further ask that
\begin{equation}\label{condinorm}
||c\cdot a||\le ||c||||a||,\quad (c\in J,\ \ a\in I).
\end{equation}
We say that a compatible $(G,J)$-algebra structure is \emph{full} if it satisfies the additional
condition
\begin{equation}\label{condifull}
J\cdot I=I.
\end{equation}
If $(A,\cF),(B,\cG)\in G$-$\Bor$ with $A$ commutative, for a compatible $(G,A)$-algebra structure on $B$ to be \emph{full} we shall also require that $\cG$ be equivalent to the filtration
$\cF\cdot\cG$ consisting of the $*$-subalgebras $\fA\cdot\fB$ with $\fA\in\cF$ and $\fB\in\cG$:
\begin{equation}\label{condibornofull}
\cF\cdot\cG\sim \cG.
\end{equation}
Let $H\subset G$ be a subgroup. The ring
\[
\Z^{(G/H)}=\{f:G/H\to \Z: |\supp(f)|<\infty\}=\bigoplus_{gH\in G/H}\Z
\]
has a natural $G$-action. We say that a $G$-ring $I$ is \emph{proper} over $G/H$ if it carries
a full compatible $(G,\Z^{(G/H)})$-structure. Observe that 
\begin{equation}\label{ccgh}
\C^{(G/H)}=C_c(G/H)\in G\emph{-}\Bor
\end{equation}
is the algebra of compactly supported continuous functions. We say that $A\in G$-$\Bor$ is \emph{proper} over $G/H$ if it carries a full compatible $(G,\C^{(G/H)})$-algebra structure. Then if $x\in G/H$ and $\chi_x$ is the characteristic function, \eqref{condiproperstar} implies that multiplication by $\chi_x$ is a $*$-homomorphism $A\to A$ with image $A_x=\chi_xA$. Hence $A_x$
is a closed $*$-subalgebra, and we have a direct sum decomposition
\begin{equation}\label{propersum}
A=\bigoplus_{x\in G/H}A_x
\end{equation}
where each $A_x\in\Bor$, and $A_H\in H$-$\Bor$. If $\cF$ is a bornology in the equivalence class of $A$, then the induced $C^*$-bornology in $A_x$ consists of the $C^*$-algebras $\fA_x=\fA\cap A_x$ with $\fA\in\cF$. The algebra $A$ also carries the following $C^*$-bornology
\[
\cF^{\bu}=\{\bigoplus_{x\in F}\fA_x: F\subset G/H \text{ finite}, \fA\in\cF\}.
\]
Condition \eqref{condibornofull} implies that $\cF^{\bu}$ is equivalent to $\cF$:
\begin{equation}\label{bornofullproper}
\cF\sim\cF^{\bu}.
\end{equation}

\begin{rem}\label{rem:prop-prop}
By \eqref{propersum} and \eqref{bornofullproper}, if $G/H$ is infinite and $A$ is nonzero and proper over $G/H$, then 
$A$ cannot be complete, since it is isomorphic as a bornolocal $C^*$-algebra to an infinite algebraic direct sum of copies of $A_H$. In particular, no nonzero $G$-$C^*$-algebra can be proper in our sense over an infinite homogeneous space $G/H$.  
\end{rem}

\begin{lem}\label{lem:propertenso}
Let $A,B\in G$-$\Bor$. Assume that $A$ is proper over $G/H$. Then $A\sotimes B$ and $A\motimes B$ are proper over $G/H$, as a $G$-bornolocal $C^*$-algebra in the first case, and as a $G$-$*$-algebra in the second.
\end{lem}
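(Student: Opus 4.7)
The strategy is to transfer the $(G,\C^{(G/H)})$-structure on $A$ to either tensor product by letting $c\in\C^{(G/H)}$ act only on the left factor: $c\cdot(a\otimes b):=(c\cdot a)\otimes b$, extended bilinearly. Since each characteristic function $\chi_x\in\C^{(G/H)}$ acts on $A$ as a $*$-homomorphism (its image is the closed $*$-subalgebra $A_x=\chi_x A$), and each $c$ is a (finite) linear combination of such $\chi_x$'s, the formula $c\cdot -$ gives a well-defined $\C$-linear map on $A\otimes B$ that descends to both $A\sotimes B$ and $A\motimes B$: in the spatial case each $\chi_F=\sum_{x\in F}\chi_x$ (with $F$ finite) acts on $A$ as a $C^*$-homomorphism with image the $C^*$-direct-summand $\bigoplus_{x\in F}A_x$, hence $\chi_F\sotimes\mathrm{id}_B$ is a $*$-homomorphism of $C^*$-algebras, and the general $c$ acts with norm $\le\left\|c\right\|_\infty$.

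The algebraic axioms \eqref{condipropertriv} and \eqref{condiproperstar}, and the norm bound \eqref{condinorm} in the $\sotimes$-case, transfer one-to-one from the corresponding identities for $A$ applied in the first tensor factor; equivariance is immediate because $G$ acts diagonally on the tensor product and by hypothesis $g(c\cdot a)=g(c)\cdot g(a)$ in $A$.

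For the algebraic fullness \eqref{condifull}, apply the direct sum decomposition \eqref{propersum} to $A$. Because the spatial and maximal tensor products are defined as colimits of tensor products of $C^*$-subalgebras and these tensor operations are exact on $C^*$-algebras (Section \ref{subsec:exaseqs} and \cite{ght}*{Lemma 4.1}), they commute with the algebraic direct sum appearing in \eqref{propersum}, yielding
\[
A\sotimes B=\bigoplus_{x\in G/H}(A_x\sotimes B),\qquad A\motimes B=\bigoplus_{x\in G/H}(A_x\motimes B),
\]
and the $x$-th summand is precisely $\chi_x\cdot(A\sotimes B)$ (respectively $\chi_x\cdot(A\motimes B)$). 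Summing over $x$ shows $\C^{(G/H)}$ acts fully.

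The one step requiring some care is the bornological fullness \eqref{condibornofull} in the $\sotimes$-case (it is not required in the $\motimes$-case, where the conclusion is only as a $G$-$*$-algebra). Let $\cF_A$, $\cF_B$ be representing bornologies; the canonical bornology of $A\sotimes B$ is $\cF_A\sotimes\cF_B=\{\fA\sotimes\fB\}$. Given $\fA\sotimes\fB$, condition \eqref{bornofullproper} applied to $A$ provides some $\fA'\in\cF_A$ with $\fA\subset\fA'$ and $\fA'=\bigoplus_{x\in F}\fA'_x$ for a finite $F\subset G/H$. Then $\chi_F\cdot\fA'=\fA'$, so
\[
\fA\sotimes\fB\subset\fA'\sotimes\fB=(\chi_F\cdot\fA')\sotimes\fB=\chi_F\cdot(\fA'\sotimes\fB),
\]
and $\fA'\sotimes\fB\in\cF_A\sotimes\cF_B$, $\chi_F\in\cF_{\C^{(G/H)}}$; the reverse inclusion $\cF_{\C^{(G/H)}}\cdot(\cF_A\sotimes\cF_B)\prec\cF_A\sotimes\cF_B$ is trivial since $\chi_F$ already lies inside any $\fA'\sotimes\fB$ up to finite sums. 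Thus the two bornologies are equivalent and $A\sotimes B$ is proper over $G/H$. The expected main obstacle is precisely the verification of this bornological fullness — it is what forces the passage from $A$'s bornology to the refined $\cF^\bullet$ of \eqref{bornofullproper} before tensoring — while the module axioms themselves are routine consequences of propriety of $A$.
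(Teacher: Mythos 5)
Your argument is correct; the paper dismisses this lemma with the single word ``Straightforward,'' and your proof --- letting $\C^{(G/H)}$ act on the first tensor factor and verifying fullness via the decomposition \eqref{propersum} together with \eqref{bornofullproper} --- is precisely the routine verification the authors had in mind. The bornological fullness in the spatial case is indeed the only point needing any care, and you handle it correctly by passing to the refined bornology $\cF^{\bu}$ before tensoring.
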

\begin{proof} Straightforward.
\end{proof}

\begin{lem}\label{lem:iprop-jprop}
Let $A,B\in G$-$\Bor$ with $A$ commutative and let $H\subset G$ be a subgroup. Assume that $A$ is proper over $G/H$ and that $B$ is equipped with a full compatible $(G,A)$-algebra structure.
Then $B$ is proper over $G/H$.
\end{lem}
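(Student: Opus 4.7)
The plan is to lift the decomposition $A = \bigoplus_{x\in G/H} A_x$ from the properness of $A$ to a corresponding decomposition of $B$, and define the $\C^{(G/H)}$-action on $B$ by projection onto the components. For each $x\in G/H$, set $B_x := A_x\cdot B$, the $\C$-linear span of elements $a\cdot b$ with $a\in A_x$ and $b\in B$; the $*$-compatibility of the $A$-action makes each $B_x$ a $*$-subalgebra of $B$, and the fullness of the $(G,A)$-structure gives $B = A\cdot B = \sum_x B_x$.

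The essential step is showing that this sum is direct. Suppose $\sum_{x\in F} b_x = 0$ with $F$ finite and $b_x\in B_x$. Fix $y\in F$ and write $b_y = \sum_i a_{y,i}\cdot c_{y,i}$ with $a_{y,i}\in A_y$; since $A_y$ inherits a $C^*$-bornology from $A$, choose a $C^*$-subalgebra $\fA_y\subset A_y$ containing every $a_{y,i}$ and an approximate unit $(e_\lambda)$ of $\fA_y$. The relations $\chi_y\chi_x = 0$ in $\C^{(G/H)}$ combined with the module identities on $A$ and on $B$ give $A_y\cdot A_x = 0$ and hence $A_y\cdot B_x = 0$ whenever $x\neq y$; therefore $e_\lambda\cdot b_x = 0$ for each $x\in F\setminus\{y\}$, and $0 = e_\lambda\cdot\sum_x b_x = e_\lambda\cdot b_y = \sum_i(e_\lambda a_{y,i})\cdot c_{y,i}$. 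By norm-continuity of the $A$-action \eqref{condinorm}, this last expression converges to $b_y$, so $b_y = 0$.

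With the decomposition $B = \bigoplus_x B_x$ in hand, define $\chi_x\cdot b$ to be the $B_x$-component of $b$ and extend $\C$-linearly to a $\C^{(G/H)}$-action on $B$. The identities \eqref{condipropertriv} and \eqref{condiproperstar} for the new structure reduce mechanically to the corresponding identities for the $(G,\C^{(G/H)})$-structure on $A$ and the $(G,A)$-structure on $B$; $G$-equivariance uses $g(A_x) = A_{gx}$, hence $g(B_x) = B_{gx}$. For the norm bound \eqref{condinorm}, the multiplier compatibility gives $(ab)(a'b') = (aa')(bb')$ and thus $B_x B_y = 0$ for $x\neq y$; any $b = \sum_x b_x$ and its components sit inside some block $C^*$-subalgebra $\fA\cdot\fB_0 = \bigoplus_{x\in F}(\fA_x\cdot\fB_0)\subset B$ (obtained from bornological fullness of the $(G,A)$-structure together with \eqref{bornofullproper} for $A$), and there the $C^*$-identity yields $\|b\|^2 = \|\sum_x b_x^*b_x\| = \max_x\|b_x\|^2$, making the projections norm-contractive.

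Algebraic fullness \eqref{condifull} is immediate from $b = \sum_x \chi_x\cdot b$ (finite sum). Bornological fullness \eqref{condibornofull} for $B$ over $\C^{(G/H)}$ then follows by the same block argument: each $\fA\cdot\fB_0\in\cF_A\cdot\cG_B$ equals $C_F\cdot(\fA\fB_0)$ for $C_F = \bigoplus_{x\in F}\C\chi_x\in\cF_{\C^{(G/H)}}$, so $\cF_{\C^{(G/H)}}\cdot\cG_B$ is cofinal in the bornology of $B$. The main obstacle is the directness of the sum in the second step: since neither $A$ nor $A_y$ need be unital, one cannot separate components by an idempotent inside $A$, and the argument must proceed via approximate units together with the $C^*$-bornology of $A_y$—this is the single place where genuinely $C^*$-algebraic (as opposed to purely $*$-algebraic) data is used.
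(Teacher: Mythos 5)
Your proof is correct and follows essentially the same route as the paper's: set $B_x=A_xB$, use fullness to get $B=\sum_x B_x$, prove directness via a bounded approximate unit of a $C^*$-subalgebra $\fA_y\subset A_y$ from the bornology together with the norm condition \eqref{condinorm} and the orthogonality $A_yA_x=0$, and then define the $\C^{(G/H)}$-action by component projections. The only difference is cosmetic: you verify directness of the whole sum and spell out the norm bound and bornological fullness, where the paper checks only that $B_x\cap B_y=0$ and leaves the remaining axioms to the reader.
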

\begin{proof}
For $x\in G/H$ let $B_x=A_xB$. It follows from \eqref{propersum} that $B=\sum_xB_x$. Next we show that $B_x\cap B_y=0$ if $x\ne y$. Let $b\in B_x\cap B_y$. Then there exist $n$, $a_1,\dots,a_n\in A_x$ and $b_1,\dots,b_n\in B$ such that
\begin{equation}\label{bsuma}
b=\sum_{i=1}^na_ib_i.
\end{equation} 
Because $A_x$ is a bornolocal $C^*$-algebra, there is a $C^*$-subalgebra $\fA_x\subset A_x$, such
that $a_1,\dots,a_n\in \fA_x$. Let $\{e_\lambda\}$ be a bounded approximate unit in $\fA_x$.
Use \eqref{condinorm} and \eqref{bsuma} to show that $\lim_\lambda e_\lambda b=b$. On the other hand, $e_\lambda\in A_x$ and $b\in B_y$ implies $e_\lambda b=0$. Hence $B_x\cap B_y=0$, as claimed. Define an action of $\C^{(G/H)}$ on $B$ as follows. For $c=\sum_x\lambda_x\chi_x\in \C^{(G/H)}$ and $b=\sum_xb_x\in B$, put
\[
c\cdot b=\sum_x\lambda_xb_x.
\]
One checks that this action satisfies \eqref{condipropertriv}, \eqref{condiproperstar} and \eqref{condinorm}. Moreover \eqref{condibornofull} and \eqref{bornofullproper} together imply
that if $B=(B,\cG)$ then $\cG^\bu\sim\cG$. Thus $B$ is proper over $G/H$. 
\end{proof}

Let $G$ be a countable discrete group and $V$ a Euclidean space of countably infinite dimension where $G$ acts by affine isometries. 
We say that the action of $G$ on $V$ is \emph{metrically proper} if  
\begin{equation}\label{metprop}
\lim_{g \to \infty}\left\|g\cdot v\right\|=\infty \qquad(\forall v \in V).
\end{equation}
The condition that a group $G$ admits such an action is the \emph{Haagerup approximation property}. In the literature, the groups that have this property are sometimes called \emph{a-T-menable groups} and sometimes \emph{Haagerup groups}. 

The purpose of this section is to prove the following.

\begin{thm}\label{thm:proper}
Let $G$ be a countable discrete group and let $V$ be a Euclidean space of countably infinite dimension with an action of $G$ by affine isometries. 
Let $\E,\F:G$-$\Bor\to\spt$ be functors and $\tau:\E\to\F$ a natural transformation. Assume:
\item[i)] The action of $G$ on $V$ is metrically proper.
\item[ii)] If $H\subset G$ is a finite subgroup and $P\in G$-$\Bor$ is proper over $G/H$, then $\tau(P)$ is a weak equivalence.
\item[iii)] The functors $\E$ and $\F$ are excisive and commute with filtering colimits along injective maps up to weak equivalence. 

\goodbreak

Then the map $\tau(\cA_c(V))$ is a weak equivalence. 
\end{thm}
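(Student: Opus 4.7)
The plan is to reduce $\tau(\cAc(V))$ to the properly supported pieces handled by hypothesis (ii), via a two-step $G$-equivariant filtration of $\cAc(V)$ together with excision and filtered-colimit compatibility from hypothesis (iii). \emph{First}, I would filter by support radius: for each $\rho>0$ let $A_\rho\subseteq\cAc(V)$ be the $\C$-linear span of those elements $f\in\cAc(S)$ (as $S$ ranges over $\cF(V)$) whose support is contained in a single closed ball $D^S((0,v),\rho)$ for some $v\in S$. By Lemma~\ref{lem:betamaps} this condition is preserved by the maps $\beta_{TS}$, which are injective by Remark~\ref{rem:betamono}, so $A_\rho$ is well-defined. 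It is a $*$-subalgebra because the intersection of two balls of radius $\rho$ lies in a ball of radius $\rho$ (centered at their midpoint), and it is $G$-invariant since $G$ acts on $V$ by affine isometries. Every element of $\cAc(V)$ has compact support, so $\cAc(V)=\colim_{\rho}A_\rho$ is a filtered colimit along injections, and hypothesis (iii) reduces the theorem to showing $\tau(A_\rho)$ is a weak equivalence for each $\rho$.

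\emph{Second}, I would decompose $A_\rho$ along $G$-orbits on $V$. For each orbit $\omega\in V/G$ fix a representative $v_\omega$; by metric properness (hypothesis (i)) the stabilizer $H_\omega=\mathrm{Stab}_G(v_\omega)$ is finite. Let $A_\rho^\omega\subseteq A_\rho$ be the $*$-subalgebra generated by functions supported in $\R\times D_\rho(w)$ for some $w\in G\cdot v_\omega$. The key structural claim is that $A_\rho=\bigoplus_{\omega\in V/G}A_\rho^\omega$ as a $G$-algebra, and that each $A_\rho^\omega$ carries a full compatible $(G,\C^{(G/H_\omega)})$-algebra structure in which $\chi_{gH_\omega}$ acts as the projection onto the subspace of functions supported near $g\cdot v_\omega$. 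Granting this, each $A_\rho^\omega$ is proper over $G/H_\omega$ in the sense of Section~\ref{sec:proper}, either directly from the definition or via Lemma~\ref{lem:iprop-jprop}.

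\emph{Third}, writing $A_\rho=\colim_{F\subset V/G\text{ finite}}\bigoplus_{\omega\in F}A_\rho^\omega$ displays $A_\rho$ as a filtered colimit of finite direct sums, each assembled from individual $A_\rho^\omega$ by split short exact sequences. Hypothesis (iii) (excision plus commuting with filtered injective colimits) therefore reduces the weak equivalence of $\tau(A_\rho)$ to those of each $\tau(A_\rho^\omega)$, and hypothesis (ii) applied to the proper algebras $A_\rho^\omega$ supplies these. Passing to the colimit over $\rho$ by way of the first step then completes the argument.

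The hard part will be the direct-sum decomposition and the $\C^{(G/H_\omega)}$-action in the second step: when balls $D_\rho(w_1), D_\rho(w_2)$ with $w_1,w_2\in G\cdot v_\omega$ overlap, a function supported in their union does not uniquely decompose into pieces supported in each ball, so the projections $\chi_{gH_\omega}\cdot$ must be implemented via a $G$-equivariant partition of the $\rho$-neighborhood of each orbit. Metric properness ensures that only finitely many translates of a given ball meet it, so such a partition exists set-theoretically, but realizing it at the level of bornolocal $*$-algebras requires either producing the necessary idempotents as limits of bounded approximate units in the $C^*$-subalgebras of the bornology (in the spirit of the proof of Lemma~\ref{lem:iprop-jprop}), or further refining the radius filtration so that overlaps within a single orbit occur only for group elements in the stabilizer.
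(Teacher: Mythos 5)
Your high-level strategy --- localize supports, use metric properness to produce algebras proper over $G/H$ with $H$ finite, and conclude by excision, filtered colimits and hypothesis (ii) --- is the same as the paper's, but both of your reduction steps have genuine gaps. First, the radius filtration is vacuous: every element of $\cAc(S)$ has compact support, so a partition of unity subordinate to a finite cover of that support by $\rho$-balls writes it as a finite sum of elements each supported in a single closed $\rho$-ball; hence the span $A_\rho$ equals all of $\cAc(V)$ for every $\rho>0$ and nothing is gained. Second, and more seriously, the decomposition $A_\rho=\bigoplus_{\omega\in V/G}A_\rho^\omega$ cannot hold: a ball of any positive radius around a point of $V$ meets uncountably many $G$-orbits, so a single function lies in $A_\rho^\omega$ for uncountably many $\omega$ and the subalgebras are far from independent. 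Shrinking $\rho$ only controls overlaps of translates \emph{within} one orbit (the issue you flag at the end), not the overlaps \emph{between} distinct orbits, and it is the latter that destroy the direct sum. Finally, even for a single orbit, properness of $A_\rho^\omega$ over $G/H_\omega$ requires the translates $gD_\rho(v_\omega)$ to be pairwise disjoint off the stabilizer, which forces $\rho$ to be small in a way depending on $v_\omega$; there is no uniform radius, and this is incompatible with letting $\rho\to\infty$ in your first step.

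The paper's resolution is structurally different at exactly this point. It does not decompose over all orbits; instead, for a fixed element it covers the compact support by \emph{finitely many} neighborhoods $U_1,\dots,U_n$, each chosen small enough (Lemma \ref{lem:deltachico}, using metric properness) that $gU_i\cap U_i=\emptyset$ unless $g$ stabilizes the center $v_i$, and then runs a \emph{finite excision filtration} over this cover: the ideals $J^i(G,\sU)$ of \eqref{jicgsu}, sitting in the exact sequences \eqref{seq:caci}, successively cut away the translates of $U_1,\dots,U_{i-1}$, so that each subquotient is supported on a genuinely disjoint union of translates of a single $U_i$ and is therefore proper over $G/G_{v_i}$ (Lemma \ref{lem:calprop}); properness is then transported from the central subalgebra $\cZc$ to $\cAc$ by Lemma \ref{lem:iprop-jprop}, as you anticipate. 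This finite filtration, which replaces your direct sum and absorbs all the overlap problems, is the key missing ingredient in your argument. A secondary omission is that the neighborhoods must be taken in the completed space $\fX=\R_+\times\ol{V}$ with the topology of \eqref{elfx}, so that they pull back consistently along the twisted projections $p_{ST}$ of \eqref{map:pi} induced by the Bott maps $\beta_{TS}$; balls taken separately in each finite-dimensional slice $\R\times S$ do not assemble correctly under these maps.
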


The proof of Theorem \ref{thm:proper} will be given at the end of the section. First we need to introduce some notation and prove some lemmas. 
Let $S\in \cF(V)$; consider the subalgebra
\[
\cZc(S)=\{f\in C_c(\R\times S): f(-t,s)=f(t,s)\}\subset\cAc(S).
\] 
Observe that $\cZc(S)$ lies in the center of $\cAc(S)$ and, moreover, we have
\begin{equation}\label{unicond}
\cAc(S)=\cZc(S)\cAc(S).
\end{equation}
Write
\[
\R_+=[0,\infty).
\]
Restriction along the inclusion $\R_+\times S\subset \R\times S$ induces an isomorphism
\begin{equation}\label{iso:cac(s)}
\cZc(S)\cong C_c(\R_+\times S).
\end{equation}
>From now on we shall identify both sides of \eqref{iso:cac(s)}. Let $S\subset T\in\cF(V)$; every element of $T$ writes uniquely
as 
\[
t=\pi_S(t)+\pi_S^{\perp}(t)\quad \pi_S(t)\in S,\ \ \pi_{S}^\perp(t)\in T^0\ominus S^0. 
\] 
Consider the map
\begin{gather}
p_{ST}:\R_+\times T\to \R_+\times S,\nonumber\\
         p_{ST}(x,t)=(\sqrt{x^2+||\pi_{S}^\perp(t)||^2},\pi_S(t)).\label{map:pi}
\end{gather}
In view of \eqref{formubeta}, under the isomorphism of
\eqref{iso:cac(s)}, the restriction of $\beta_{TS}$ to $\cZc(S)$ identifies with composition with $p_{ST}$:
\begin{equation}\label{map:newbeta}
\beta_{TS}(f)=fp_{ST}. 
\end{equation}
Put
\[
\cZc(V)=\colim_S\cZc(S).
\]
Consider the inverse system of locally compact topological spaces and proper maps $\{p_{ST}:\R_+\times T\to \R_+\times S\}$. Let
$\fH=\ol{V}$ be the Hilbert space completion; write $\fH_w$ for $\fH$ equipped with the locally convex topology of weak convergence. Equip 
\begin{equation}\label{elfx}
\fX:=\R_+\times \fH
\end{equation}
with the coarsest topology such that both the projection $\R_+\times \fH\to \fH_w$ and the map $\R_+\times \fH\to \R_+$, $(x,h)\mapsto \sqrt{x^2+||h||^2}$ are continuous. If $h\in \fH$ and $S\in\cF(V)$, write $h=\pi_S(h)+\pi_S^{\perp}(h)\in S+S_0^\perp$. Let
\[
p_S:\fX\to \R_+\times S,\ \ p_S(x,h)=(\sqrt{x^2+||\pi_S^{\perp}(h)||^2},\pi_S(h)).
\]
We have a homeomorphism
\begin{equation}\label{map:homeo}
\fX\to \lim_{S\in\cF(V)}\R_+\times S,\ \ (x,h)\mapsto (p_S(x,h))_S.
\end{equation}
Observe that if $S\in\cF(V)$, then the subspace topology on $\R_+\times S\subset\fX$ is the usual Euclidean topology. 
Let $v\in S\in\cF(V)$ and let
\[
\Da_S((r,v),\delta)=\{(x,s):(x-r)^2+||s-v||^2<\delta^2\}
\]
be the open ball in $\R_+\times S$. The subsets
\begin{equation}\label{elU}
U(S,r,v,\delta)=p_S^{-1}(\Da_S((r,v),\delta))\quad (S\in\cF(V), (r,v)\in \R_+\times S, \delta>0),
\end{equation}
are open and form a sub-basis for the topology of $\fX$. 
Observe that the maps 
\[
\cZc(S)\to C_c(\fX),\quad f\mapsto fp_S
\]
induce a monomorphism 
\begin{equation}\label{map:embed}
\cZc(V)\hookrightarrow C_c(\fX).
\end{equation}
Its image consists of those $f$ which factor through a projection $p_S$. 

\goodbreak

Let $S\in\cF(V)$, $X\subset \R_+\times S$ a locally closed subset. Put
\[
\cZc(X)=C_c(X).
\]
If $X$ happens to be open then $\cZc(X)$ is the subalgebra of $\cZc(S)$ consisting of those elements $f$ with $\supp(f)\subset X$.

\begin{lem}\label{lem:tietze}
Let $S\in\cF(V)$, $X\subset \R_+\times S$ a locally closed subset, and let $Z\subset X$ be closed in the subspace topology. Then the restriction map
$\cZc(X)\to \cZc(Z)$ is surjective.
\end{lem}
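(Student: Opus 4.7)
The plan is to combine a Tietze-type extension argument with a careful choice of a compact neighborhood, so that the extended function automatically has compact support.

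First I would note some topological facts: since $X$ is locally closed in the metrizable space $\R_+\times S$, the space $X$ (in its subspace topology) is itself metrizable, locally compact, and Hausdorff, and therefore normal. Then, given $f\in\cZc(Z)=C_c(Z)$ with compact support $K=\supp(f)\subset Z\subset X$, the compactness of $K$ in the locally compact Hausdorff space $X$ lets me choose an open neighborhood $U$ of $K$ in $X$ whose closure $\overline{U}$ in $X$ is compact and still contained in $X$.

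Second, I would patch $f$ with the zero function outside $U$. Define $\tilde f_0\colon Z\cup(X\setminus U)\to\C$ by $\tilde f_0|_Z=f$ and $\tilde f_0|_{X\setminus U}=0$. These agree on the intersection $Z\cap(X\setminus U)\subset Z\setminus K$, where $f$ vanishes; both $Z$ and $X\setminus U$ are closed in $X$, so the pasting lemma gives continuity of $\tilde f_0$ on the closed subset $Z\cup(X\setminus U)\subset X$.

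Third, I would invoke the Tietze extension theorem (valid since $X$ is normal) to extend $\tilde f_0$ to a continuous function $\tilde f\colon X\to\C$. By construction $\tilde f$ vanishes on $X\setminus U$, so $\supp(\tilde f)\subset\overline{U}$, which is compact; hence $\tilde f\in\cZc(X)$ and $\tilde f|_Z=f$.

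There is no serious obstacle here; the only point that needs attention is the choice of $U$, which uses local compactness of $X$ (a consequence of $X$ being locally closed in a metrizable space). Everything else is the standard pasting-lemma-plus-Tietze argument.
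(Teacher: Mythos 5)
Your proof is correct and is exactly the argument the paper has in mind: its own proof consists of the single sentence ``This is a straightforward application of Tietze's extension theorem.'' Your write-up simply supplies the standard details (local compactness of the locally closed set $X$, the choice of a precompact neighborhood $U$ of $\supp(f)$, and the pasting with zero before applying Tietze) needed to guarantee the extension has compact support.
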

\begin{proof} This is a straightforward application of Tietze's extension theorem.
\end{proof}

For $X\supset Z$ as in Lemma \ref{lem:tietze}, we write
\[
I(X,Z)=\ker(\cZc(X)\to \cZc(Z)).
\]

The following trivial observation will be useful in what follows.

\begin{lem}\label{lem:pro}
Let $S\subset T\in\cF(V)$ and $X\subset \R_+\times S$. Then $p_S^{-1}(X)\cap (\R_+\times T)=p_{ST}^{-1}(X)$. 
\end{lem}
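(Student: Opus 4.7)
The plan is to show that the restriction of $p_S \colon \fX \to \R_+ \times S$ to the subspace $\R_+ \times T \subset \fX$ coincides with $p_{ST} \colon \R_+ \times T \to \R_+ \times S$; the set-theoretic identity then follows immediately by taking preimages of $X$.

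The key observation is the compatibility of the orthogonal projections. For $t \in T \subset V \subset \fH$, the vector $\pi_S(t)$ (orthogonal projection onto the finite-dimensional subspace $S$ inside $\fH$) agrees with the orthogonal projection onto $S$ taken inside $T$, because $S \subset T$ and both projections are characterized by the same minimizing property on the finite-dimensional space $T$. Consequently $\pi_S^{\perp}(t) = t - \pi_S(t)$ belongs to $T^0 \ominus S^0$, and $\|\pi_S^{\perp}(t)\|$ is the same whether computed in $\fH$ or in $T$.

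Given this, for any $(x,t) \in \R_+ \times T$ I compute directly
\[
p_S(x,t) = \bigl(\sqrt{x^2+\|\pi_S^{\perp}(t)\|^2},\, \pi_S(t)\bigr) = p_{ST}(x,t),
\]
comparing the definition of $p_S$ from the paragraph preceding \eqref{map:homeo} with the definition of $p_{ST}$ in \eqref{map:pi}. Thus $p_S|_{\R_+\times T} = p_{ST}$, and therefore
\[
p_S^{-1}(X)\cap(\R_+\times T) = (p_S|_{\R_+\times T})^{-1}(X) = p_{ST}^{-1}(X).
\]
There is no real obstacle here; the only thing worth checking carefully is that the two notations $\pi_S^\perp$ (one used to define $p_S$ on $\fX$, the other used to define $p_{ST}$ on $\R_+ \times T$) are consistent on elements of $T$, which is immediate from the inclusion $S \subset T$.
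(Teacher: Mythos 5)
Your proof is correct and is exactly the verification the paper has in mind: the paper labels this a ``trivial observation'' and omits the proof entirely, the point being precisely that $p_S$ restricted to $\R_+\times T$ coincides with $p_{ST}$ because, for $t\in T$, the nearest-point decomposition $t=\pi_S(t)+\pi_S^{\perp}(t)$ with $\pi_S(t)\in S$ is the same whether the complement is taken in $T^0$ or in $\fH$ (as $T^0\ominus S^0\subset (S^0)^{\perp}$ and the decomposition is unique). Nothing further is needed.
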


Let $S\in\cF(V)$ and let $X\subset \R_+\times S$ be a locally closed subset. Put $L=p_S^{-1}(X)$; by Lemma \ref{lem:pro}, if $T'\supset T\supset S$, then \eqref{map:newbeta} defines a map $\beta_{T'T}:\cZc((\R_+\times T)\cap L)\to \cZc((\R_+\times T')\cap L)$. Write
\begin{equation}\label{acl}
\cZc(L)=\colim_{T\supset S}\cZc((\R_+\times T)\cap L).
\end{equation}
If $Z\subset X$ is closed in the subspace topology, and $M=p_S^{-1}(Z)$, we write $I(L,M)=\ker(\cZc(L)\to\cZc(M))$. We have 
\begin{equation}\label{ilm}
I(L,M)=\colim_{T\supset S}I((\R_+\times T)\cap L,(\R_+\times T)\cap M).
\end{equation}

If now $G$ acts on $V$ by affine isometries, then the action extends by continuity to an action by affine isometries on $\fH$.
Let $G$ act on 
$\fX$ via $g(x,h)=(x,gh)$. We also have an action of $G$ on $\lim_S(\R_+\times S)$ via 
$$
(g(x_S,v_S))_{gS}=(x_S,g(v_S));
$$ 
one checks that the homeomorphism \eqref{map:homeo} is equivariant with respect to these actions. Similarly, the map \eqref{map:embed} is a homomorphism in $G$-$\Bor$. 
We remark that if the action of $G$ on $V$ is metrically proper then so are the actions on $\fH$ and on $\R\times \fH$. In particular by \eqref{metprop}, we have
\begin{equation}\label{rmetprop}
\lim_{g\to\infty}||g(r,v)||=\infty\quad (r,v)\in \R_+\times \fH.
\end{equation}
The following lemma is an immediate consequence of \eqref{rmetprop}.

\begin{lem}\label{lem:nocorta}
Let $G$ act on $V$ by affine isometries. Assume that the action is metrically proper. Let $X,Y\subset \R_+\times \fH$ be bounded subsets and let $\cG\subset G$ be a finite subset. Then the set
\[
\tilde{\cG}_{X,Y}=\{h\in G:\cG X\cap hY\ne\emptyset\} 
\]
is finite.
\end{lem}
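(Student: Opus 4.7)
The plan is a direct estimate combining the isometry property of the $G$-action with the metric propriety statement \eqref{rmetprop}. We may assume both $X$ and $Y$ are nonempty, since otherwise $\tilde{\cG}_{X,Y}$ is empty. Fix once and for all base points $x_0\in X$ and $y_0\in Y$, and note that each element of $\cG$ acts on $\R_+\times\fH$ by an affine isometry (in the $\R_+$ coordinate it is the identity, in the $\fH$ coordinate it is the extension of $v\mapsto \ell(g)v+\tau(g)$ from $V$ to $\fH$ via \eqref{ellytau}).

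The key step is to bound $\|h\cdot y_0\|$ uniformly in $h\in\tilde{\cG}_{X,Y}$. For each such $h$, pick $g\in\cG$, $x\in X$, $y\in Y$ with $g\cdot x = h\cdot y$. Since $h$ acts by an affine isometry, the difference $h\cdot y_0 - h\cdot y$ has the same norm as $y_0-y$, so
\[
\|h\cdot y_0\|\le \|h\cdot y_0-h\cdot y\|+\|h\cdot y\|=\|y_0-y\|+\|g\cdot x\|,
\]
and applying the same trick to $g$ with base point $x_0$,
\[
\|g\cdot x\|\le \|g\cdot x-g\cdot x_0\|+\|g\cdot x_0\|=\|x-x_0\|+\|g\cdot x_0\|.
\]
Boundedness of $X$ and $Y$ makes $\|x-x_0\|$ and $\|y_0-y\|$ bounded independently of the choices, while finiteness of $\cG$ makes $\max_{g\in\cG}\|g\cdot x_0\|$ finite. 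Combining these, there is a constant $M=M(X,Y,\cG,x_0,y_0)$ with $\|h\cdot y_0\|\le M$ for every $h\in\tilde{\cG}_{X,Y}$.

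Writing $y_0=(r_0,v_0)\in\R_+\times\fH$, the action gives $h\cdot y_0=(r_0,h\cdot v_0)$, so the preceding bound reads $\sqrt{r_0^2+\|h\cdot v_0\|^2}\le M$, i.e.\ $\|h\cdot v_0\|$ is uniformly bounded. Since the paper already observes that metric propriety of the action on $V$ extends to metric propriety on $\fH$ and on $\R_+\times\fH$, \eqref{rmetprop} (or equivalently \eqref{metprop} applied to $v_0$) says that only finitely many $h\in G$ can satisfy such a bound. Hence $\tilde{\cG}_{X,Y}$ is finite.

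The only mild subtlety—and the step I would double-check first—is that $g\in\cG$ acts as a genuine isometry on the whole of $\R_+\times\fH$ (so that $\|gx-gx_0\|=\|x-x_0\|$ holds even for $x,x_0$ in the Hilbert completion, not just in $V$); this is immediate from extending \eqref{ellytau} by continuity, as the paper remarks. No further ingredients are needed.
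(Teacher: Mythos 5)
Your argument is correct and is precisely the fleshed-out version of what the paper does: the paper offers no written proof, merely declaring the lemma ``an immediate consequence of \eqref{rmetprop},'' and your triangle-inequality bound on $\|h\cdot y_0\|$ followed by an appeal to metric properness is exactly the intended reasoning. The point you flag for double-checking (that $G$ acts by affine isometries on all of $\R_+\times\fH$, not just on $\R_+\times V$) is indeed covered by the paper's remark that the action extends by continuity to $\fH$.
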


Let $\fX$ be as in \eqref{elfx}. In \eqref{elU} we have introduced the open subsets $U(S,r,v,\delta)\subset\fX$. We shall also consider the compact subsets
\begin{equation}\label{elW}
W(S,r,v,\delta)=p_S^{-1}(D_S((r,v),\delta)) \quad (S\in\cF(V), (r,v)\in \R_+\times S, \delta>0).
\end{equation}
Consider the stabilizer subgroup of an element $v\in V$:
\[
G_v=\left\{g \in G \colon gv=v\right\}.
\]
If the action of $G$ on $V$ is metrically proper, then $G_v$ is finite for all $v\in V$. 

\begin{lem}\label{lem:deltachico}
Let $\fX$ be as in \eqref{elfx} and let $(r,v)\in \R_+\times V$. Let $G$ act on $V$ by affine isometries. Assume that the action is metrically proper. Then there exist a precompact open neighborhood $(r,v)\in U\subset\fX$ and an affine subspace $S\in\cF(V)$ such that
\item[i)] $U=p_{S}^{-1}(U\cap (\R_+\times S))$. 
\item[ii)]
\[
gU\cap U=\left\{\begin{matrix}U& g\in G_v\\ \emptyset & g\notin G_v\end{matrix}\right.
\]
\end{lem}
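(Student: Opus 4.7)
The plan is to promote the metric properness of the action on $V$ to topological properness of the $G$-action on $\fX$ via Lemma~\ref{lem:nocorta}, to Hausdorff-separate the orbit of $(r,v)$, and finally to realize the resulting open set as a saturated basic neighborhood $p_S^{-1}(\widetilde W)$.

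First I would record two basic facts about $\fX$. It is Hausdorff, since the continuous maps $\fX\to\fH_w$ and $\fX\to\R_+$, $(x,h)\mapsto\sqrt{x^2+\|h\|^2}$, jointly separate points (if $h=h'$ and $x\neq x'$, the second map distinguishes them). Also, the continuous injection $(x,h)\mapsto(h,\sqrt{x^2+\|h\|^2})$ embeds $\fX$ as a closed subspace of $\fH_w\times\R_+$, using the weak lower semicontinuity of the norm on the Hilbert space $\fH$, so bounded subsets of $\fX$ have compact closure by the Banach--Alaoglu theorem. Now pick a $G_v$-invariant affine subspace $S_0\in\cF(V)$ containing $v$, which exists because $G_v$ is finite, and set $U_0=p_{S_0}^{-1}(\Da_{S_0}((r,v),1))$; this is bounded and $G_v$-invariant (since $G_v$ acts by isometries fixing $v$ on $\R_+\times S_0$, preserving the ball, and this action commutes with $p_{S_0}$), and $\overline{U_0}$ is compact. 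Applying Lemma~\ref{lem:nocorta} with $X=Y=\overline{U_0}$ and $\cG=\{e\}$ shows that $F=\{g\in G:g\overline{U_0}\cap\overline{U_0}\neq\emptyset\}$ is finite.

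For each $g\in F\setminus G_v$, I would use Hausdorffness to pick disjoint open sets $A_g\ni(r,v)$ and $B_g\ni g(r,v)$ and form $V_g=A_g\cap g^{-1}B_g$, an open neighborhood of $(r,v)$ satisfying $V_g\cap gV_g\subseteq A_g\cap B_g=\emptyset$. Setting $W=U_0\cap\bigcap_{g\in F\setminus G_v}V_g$ and then $W'=\bigcap_{h\in G_v}hW$ yields a $G_v$-invariant open neighborhood $W'$ of $(r,v)$ satisfying $gW'\cap W'=\emptyset$ for every $g\notin G_v$: the case $g\notin F$ follows from $gU_0\cap U_0=\emptyset$, and the case $g\in F\setminus G_v$ from $V_g\cap gV_g=\emptyset$.

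The main obstacle is the final step: producing a neighborhood of the required form $U=p_S^{-1}(\widetilde W)$ inside $W'$. For fixed $S$ the sets $U(S,r,v,\delta)$ do not form a neighborhood basis at $(r,v)$, since they remain ``thick'' in the $S^\perp$-direction; however, any finite intersection of subbasic opens $U(T_i,r_i,v_i,\delta_i)$ can be rewritten, after passing to a common $T\in\cF(V)$ containing $S_0$ and all the $T_i$, as $p_T^{-1}(\widetilde W)$ for some open $\widetilde W\subseteq\R_+\times T$, and these basic opens do form a neighborhood basis. I would enlarge $T$ further so that it is $G_v$-invariant (still finite-dimensional, since $G_v$ is finite), and replace $\widetilde W$ by $\bigcap_{h\in G_v}h_T\widetilde W$, where $h_T$ is the induced action on $\R_+\times T$, obtaining a $G_v$-invariant saturated basic open $U=p_T^{-1}(\widetilde W)\subseteq W'$ containing $(r,v)$. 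Taking $S:=T$, property~(i) is automatic because $p_T$ restricts to the identity on $\R_+\times T\subset\fX$. For~(ii), the equivariance identity $p_T\circ g=g_T\circ p_T$ for $g\in G_v$ (a consequence of $G_v$-invariance of $T$ and the action being by isometries fixing $v$) combined with the $G_v$-invariance of $\widetilde W$ yields $gU=U$ for $g\in G_v$; for $g\notin G_v$, the inclusion $U\subseteq W'$ gives $gU\cap U\subseteq gW'\cap W'=\emptyset$. Precompactness of $U$ follows from $U\subseteq\overline{U_0}$.
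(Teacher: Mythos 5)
Your proof is correct, but it reaches the conclusion by a genuinely different route than the paper. The paper stays entirely inside the explicit sub-basic sets: for a $G_v$-invariant $S_1\ni v$ it takes $W=W(S_1,r,v,\delta)$ as in \eqref{elW}, shrinks $\delta$ until $W\cap G(r,v)=\{(r,v)\}$ (possible because boundedness of $\|h\|$ on $W$ plus metric properness makes this intersection finite), lets $\cG=\{g:W\cap gW\neq\emptyset\}\setminus G_v$ (finite by Lemma~\ref{lem:nocorta}), and sets $U=U(S_1,r,v,\delta)\setminus\cG W$ with $S=\cG S_1$; disjointness $gU\cap U=\emptyset$ for $g\in\cG$ is then automatic since $gU\subset gW$ has been excised, and saturation holds because $U$ is built from $p_S$-saturated pieces. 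You instead first establish general point-set facts about $\fX$ --- Hausdorffness, compactness of closed bounded saturated sets via the closed embedding into $\fH_w\times\R_+$ and Banach--Alaoglu, and the fact that the sets $p_T^{-1}(\widetilde W)$ form a neighborhood basis (correctly obtained by pushing a finite intersection of the sub-basic sets \eqref{elU} into a single large $T$) --- and then run the standard slice argument for a proper action: Hausdorff-separate $(r,v)$ from the finitely many translates $g(r,v)$ with $g\in F\setminus G_v$, intersect, average over $G_v$, and shrink to a $G_v$-invariant saturated basic open. Both arguments rest on the same two pillars, namely Lemma~\ref{lem:nocorta} for finiteness and the $G_v$-invariant choice of a finite-dimensional affine subspace, and both are complete. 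Yours carries the extra burden of verifying that saturated opens form a basis, which the paper sidesteps by constructing $U$ directly in saturated form; in exchange you avoid the paper's $\delta$-shrinking step and you actually justify the precompactness assertion (the compactness of the sets \eqref{elW}), which the paper states without proof.
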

\begin{proof}
Let $S_1\in\cF(V)$ such that $v\in S_1$. Because $G_v$ is finite, the affine subspace $S^\prime_1$ generated by the orbit $G_v S_1$ is in $\cF(V)$. Hence, upon replacing $S_1$ by $S^\prime_1$ if necessary, we may assume that 
\begin{equation}\label{gv-stable}
S_1=G_vS_1. 
\end{equation}
Let $\delta>0$ and let $W=W(S_1,r,v,\delta)$. By definition, an element $(x,h)\in \fX$ is in $W$ if and only if
\begin{equation}\label{hdelta}
\delta^2\ge (\sqrt{x^2+||\pi_{S_1}^{\perp}(h)||^2}-r)^2+||\pi_{S_1}(h)-v||^2.
\end{equation}
We may rewrite the right hand side of \eqref{hdelta} as 
\begin{equation}\label{ainfty}
x^2+||h-v||^2+r^2-2r\sqrt{x^2+||\pi_{S_1}^{\perp}(h)||^2}.
\end{equation}
Observe that if $g\in G_v$, then 
\[
||gh-v||=||gh-gv||=||h-v||.
\] 
Moreover for $\ell=\ell_g$ as in \eqref{ellytau}, using \eqref{gv-stable} in the second identity, we have
\[
||\pi_{S_1}^{\perp}(gh)||=||\ell(\pi_{g^{-1}S_1}^{\perp}(h))||=||\pi_{S_1}^{\perp}(h)||.
\]
We have shown that $G_vW=W$. Observe also that the expression \eqref{ainfty} goes to infinity as $||h||$ does. 
In particular the map $(x,h)\to ||h||$ is bounded on $W$. Hence by \eqref{metprop}
$W\cap G(r,v)$ is finite. Taking $\delta$ sufficiently small, we obtain $W\cap G(r,v)=\{(r,v)\}$. By Lemma \ref{lem:nocorta}, the set 
$\cG=\{g\in G:W\cap gW\ne\emptyset\}\setminus G_v$ is finite.  Let $U_1=U(S_1,r,v,\delta)$; put
\[
U=U_1\setminus(\cG W).
\]
Let $S=\cG S_1$. Then $U$ is precompact and satisfies both (i) and (ii).
\end{proof}

An open set $U\subset \fX$ is called \emph{$G$-admissible} if it admits a finite open covering 
\begin{equation}\label{g-admi}
U=\bigcup_{i=1}^nU_i
\end{equation}
such that each $U_i$ is precompact and satisfies the conditions of Lemma \ref{lem:deltachico} for some $(r_i,v_i)\in U_i$.

Let $U\subset \fX$ be an open subset. Assume that there exists $S\in\cF(V)$ such that $U=p_S^{-1}(U\cap(\R_+\times S))$. Then if $\cG\subset G$ is finite and $T\supset \cG S$, we have $\cG U=p_T^{-1}((\cG U)\cap(\R_+\times T))$. Hence the algebra 
$\cZc(\cG U)$ is defined by \eqref{acl}. Put
\begin{equation}\label{acgu}
\cZc(G,U)=\colim_{\cG\subset G}\cZc(\cG U).
\end{equation}
Here the colimit runs over the finite subsets $\cG\subset G$.
\begin{lem}\label{lem:admicoli}
Let $G$ be a discrete group acting on $V$ by affine isometries. Assume that the action is metrically proper. Then 
\[
\cZc(V)=\colim_{U}\cZc(G,U),
\] 
where the colimit runs over the $G$-admissible open subsets of $\fX$.
\end{lem}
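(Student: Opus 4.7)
The plan is to prove both inclusions between $\cZc(V)$ and the filtered colimit over $G$-admissible open subsets $U\subset\fX$; since a finite union of $G$-admissible sets is itself $G$-admissible, the indexing poset is directed. That each $\cZc(G,U)$ maps canonically into $\cZc(V)$ follows by unfolding the definitions \eqref{acgu} and \eqref{acl}: writing $U=\bigcup_{i=1}^n U_i$ with each $U_i$ as in Lemma \ref{lem:deltachico}, Lemma \ref{lem:pro} lets me choose a single $S\in\cF(V)$ containing all of the associated $S_i$, so that $U=p_S^{-1}(U\cap(\R_+\times S))$; then for any finite $\cG\subset G$ and $T\supset \cG S$ the algebra $\cZc((\R_+\times T)\cap \cG U)$ sits inside $\cZc(T)\subset\cZc(V)$ compatibly with the transition maps $\beta_{TT'}$.

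For the nontrivial direction, given $f\in\cZc(V)$, pick $S\in\cF(V)$ and $f'\in\cZc(S)=C_c(\R_+\times S)$ whose image in $\cZc(V)$ is $f$. Under \eqref{map:embed}, $f$ is represented by $f'\circ p_S\in C_c(\fX)$, whose support in $\fX$ is $W := p_S^{-1}(\supp f')$. As observed after \eqref{elW}, $W$ is a compact subset of $\fX$ (closed balls in $\fH$ are weakly compact).

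Next I cover $W$ via Lemma \ref{lem:deltachico} — this is where the metric properness of the action on $V$ is used — producing for each $(r,v)\in W$ a precompact open neighborhood $U_{(r,v)}$ satisfying conditions (i) and (ii) of that lemma. By compactness of $W$, finitely many such neighborhoods $U_1,\dots,U_n$ cover $W$, and I set $U:=\bigcup_i U_i$. By construction $U$ is $G$-admissible and $W\subset U$.

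Finally I verify $f\in\cZc(U)\subset\cZc(G,U)$ (taking $\cG=\{e\}$). Choose $T\in\cF(V)$ containing $S$ together with all the $S_i$ produced by Lemma \ref{lem:deltachico}; by \eqref{map:newbeta} the image $\beta_{TS}(f')$ in $\cZc(T)\subset C_c(\R_+\times T)$ is $f'\circ p_{TS}$, whose support equals $p_{TS}^{-1}(\supp f')=W\cap(\R_+\times T)\subset U\cap(\R_+\times T)$. Hence $\beta_{TS}(f')\in C_c(U\cap(\R_+\times T))=\cZc((\R_+\times T)\cap U)$, and passing to the colimit \eqref{acl} places $f$ in $\cZc(U)$. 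The only real obstacle is notational bookkeeping — tracking which affine subspaces $S$, $T$ and which finite subset $\cG\subset G$ play which role so that all the pullback and support identities line up — since the substantive geometric content has already been absorbed into Lemma \ref{lem:deltachico} and the compactness of $W$.
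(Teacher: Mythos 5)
Your proof is correct and follows essentially the same route as the paper: both arguments reduce to covering a compact set containing the support of a given element by finitely many of the neighborhoods produced by Lemma \ref{lem:deltachico}, and both use precompactness of $G$-admissible sets for the easy containment. The paper merely packages this as a cofinality statement between the exhaustion by the balls $U_\rho=U(0,0,0,\rho)$ and the $G$-admissible open sets, whereas you chase an arbitrary element directly; the content is the same.
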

\begin{proof}
Let $U_\rho=U(0,0,0,\rho)\subset\fX$. We have $U_\rho\cap (\R_+\times S)=\Da_S((0,0),\rho)$ for every $0\in S\in\cF(V)$. Thus
\[
\cZc(V)=\colim_{0\in S}\colim_\rho\cZc(\Da_S((0,0),\rho))=\colim_\rho\cZc(U_\rho).
\]
Because $U_\rho\subset W(0,0,0,\rho)$, which is compact, there is a $G$-admissible open subset $U_\rho\subset U\subset \fX$ $(\rho>0)$, by Lemma \ref{lem:deltachico}. On the other hand, since a $G$-admissible open set is precompact by definition, it is bounded, whence it is contained in some $U_{\rho}$. Hence
\[
\colim_\rho\cZc(U_\rho)=\colim_U\cZc(U)=\colim_U\cZc(G,U),
\]
where the last two colimits are taken over the $G$-admissible open sets $U\subset \fX$. This completes the proof. 
\end{proof}

Let $U\subset\fX$ be a $G$-admissible open subset and let $\sU=\{U_1,\dots,U_n\}$ and $v_1,\dots, v_n$ be as in \eqref{g-admi}. We may choose 
$S\in\cF(V)$ such that 
\begin{equation}\label{elsdelsu}
U_i=\pi_{S}^{-1}(U_i\cap (\R_+\times S)),\quad (i=1,\dots,n).
\end{equation} 
Write 
\begin{gather*}
G_i=G_{v_i},\quad U_{<i}=\bigcup_{j<i}U_j.
\end{gather*}
Let $\cG\subset G$ be a finite subset. With the notations of \eqref{acl} and of Lemma \ref{lem:nocorta}, put
\begin{gather*}
\tilde{\cG}^i=\tilde{\cG}_{U,U_{<i}},\\
\cZc^i(\cG,\sU)=\cZc(\cG U\setminus\tilde{\cG}^iU_{<i}).
\end{gather*}
Observe that
\begin{gather}\label{cgij}
i<j\Rightarrow \tilde{\cG}^i\subset\tilde{\cG}^j, \nonumber\\
\cG U\setminus\tilde{\cG}^iU_{<i}=\cG U\setminus\tilde{\cG}^jU_{<i}\supset\cG U\setminus\tilde{\cG}^jU_{<j}.
\end{gather}
Moreover, $\cG U\setminus\tilde{\cG}^jU_{<j}$ is closed in $\cG U\setminus\tilde{\cG}^iU_{<i}$  $(i<j)$. With the notation of \eqref{ilm}, put
\begin{equation}\label{jicgsu}
J^i(\cG,\sU)=I(\cG U\setminus\tilde{\cG}^iU_{<i},\cG U\setminus\tilde{\cG}^{i+1}U_{<i+1}).
\end{equation}
By Lemma \ref{lem:tietze} we have an exact sequence
\[
0\to J^i(\cG,\sU)\to \cZc^i(\cG,\sU)\to \cZc^{i+1}(\cG,\sU)\to 0.
\]
Note that 
\begin{equation}\label{topfil}
\cZc^{n+1}(\cG,\sU)=0,\quad J^n(\cG,\sU)=\cZc^n(\cG,\sU).
\end{equation}
If $\cH\supset\cG$ is another finite subset of $G$, then $\cG U\setminus (\tilde{\cG}^iU_{<i})$ is open in $\cH U\setminus (\tilde{\cH}^iU_{<i})$. Hence $\cZc^i(\cG,\sU)\subset\cZc^{i}(\cH,\sU)$ and thus the algebraic colimit
\begin{equation}\label{aicgsu}
\cZc^i(G,\sU)=\colim_{\cG}\cZc^i(\cG,\sU)
\end{equation}
is in $G$-$\Bor$. One checks that restriction maps induce an equivariant map $\cZc^i(G,\sU)\to\cZc^{i+1}(G,\sU)$, and so for 
\begin{equation}\label{jigsu}
J^i(G,\sU)=\colim_{\cG}J^i(\cG,\sU)
\end{equation}
we have an exact sequence in $G$-$\Bor$
\begin{equation}\label{seq:caci}
0\to J^i(G,\sU)\to \cZc^i(G,\sU)\to \cZc^{i+1}(G,\sU)\to 0.
\end{equation}

\begin{lem}\label{lem:calprop}
Let $J^i(G,\sU)\in G$-$\Bor$ be as in \eqref{jigsu} ($1\le i\le n$). Then $J^i(G,\sU)$ is proper over $G/G_{i}$. 
\end{lem}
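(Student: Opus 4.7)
My plan is to explicitly identify $J^i(G,\sU)$ as an algebraic direct sum of $\cZc$-algebras indexed by the coset space $G/G_i$, and then to read off the proper structure from this decomposition. The key geometric input is the disjointness of the translates $\{gU_i\}$ for $gG_i$ running over $G/G_i$, which comes from Lemma \ref{lem:deltachico}(ii).

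First I would simplify the sets defining $J^i$. Unwinding the definition of $\tilde{\cG}^i$, a point $x\in\cG U$ lies in some $hU_{<i}$ with $h\in G$ iff it lies in such a translate with $h\in\tilde{\cG}^i$ (for any witnessing $h$, the intersection $\cG U\cap hU_{<i}$ is nonempty). Hence
\[
\cG U\setminus\tilde{\cG}^iU_{<i}=\cG U\setminus GU_{<i},
\]
and likewise at level $i+1$. Since $GU_{<i+1}\setminus GU_{<i}=GU_i\setminus GU_{<i}$, the standard open/closed identification for $\cZc$ — using Lemma \ref{lem:tietze} for surjectivity on the closed piece and extension by zero on the open piece — gives
\[
J^i(\cG,\sU)\cong\cZc\bigl(\cG U\cap(GU_i\setminus GU_{<i})\bigr).
\]

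Next, Lemma \ref{lem:deltachico}(ii) forces the translates $\{gU_i\}_{gG_i\in G/G_i}$ to be pairwise disjoint (the coset of $g$ determines $gU_i$), and Lemma \ref{lem:nocorta} ensures that only finitely many cosets intersect the bounded set $\cG U$. Thus $\cG U\cap(GU_i\setminus GU_{<i})$ splits topologically as a finite disjoint union of locally closed pieces, yielding a $\C$-algebra direct sum decomposition of $J^i(\cG,\sU)$. Taking the algebraic colimit along the injective transition maps as $\cG$ grows (see \eqref{jigsu}) then yields
\[
J^i(G,\sU)\;\cong\;\bigoplus_{gG_i\in G/G_i}\cZc\bigl(gU_i\setminus GU_{<i}\bigr),
\]
with the $G$-action permuting summands according to the natural action of $G$ on $G/G_i$.

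Finally I would define the $(G,\C^{(G/G_i)})$-structure on $J^i(G,\sU)$ by letting $\chi_{gG_i}$ act as the projection onto the $gG_i$-summand. Conditions \eqref{condipropertriv}--\eqref{condinorm} are immediate because these projections are central self-adjoint idempotents of norm at most one, and \eqref{condifull} holds since every element of $J^i(G,\sU)$ is a finite sum of summand components. The only mildly delicate point, where I expect to need the most care, is \eqref{condibornofull}: any $C^*$-subalgebra in the natural bornology of the colimit is, by construction, contained in the subsum corresponding to a finite set $F\subset G/G_i$, hence is absorbed by $\sum_{gG_i\in F}\chi_{gG_i}\in\C^{(G/G_i)}$, which gives the required equivalence of bornologies.
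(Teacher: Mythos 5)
Your overall strategy is the one the paper uses: split $J^i$ into pieces indexed by $G/G_i$ via the pairwise disjointness of the translates $gU_i$ (Lemma \ref{lem:deltachico}(ii)) together with the finiteness coming from Lemma \ref{lem:nocorta}, and then let $\chi_{gG_i}$ act as the corresponding coordinate projection. Your preliminary simplification $\cG U\setminus\tilde{\cG}^iU_{<i}=\cG U\setminus GU_{<i}$ is correct, and your verification of conditions \eqref{condipropertriv}--\eqref{condibornofull} at the end is fine.

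There is, however, one step that fails as stated: the ``standard open/closed identification'' $J^i(\cG,\sU)\cong\cZc\bigl(\cG U\cap(GU_i\setminus GU_{<i})\bigr)$. For compactly supported functions (as opposed to $C_0$) the sequence $0\to C_c(X\setminus Z)\to C_c(X)\to C_c(Z)\to 0$ is \emph{not} exact in the middle: the kernel $I(X,Z)$ consists of all $f\in C_c(X)$ vanishing on $Z$, and such an $f$ may have support meeting $Z$ (namely points of $\ol{X\setminus Z}\cap Z$), in which case $f|_{X\setminus Z}$ is not compactly supported on $X\setminus Z$. Extension by zero does give an injection $C_c(X\setminus Z)\hookrightarrow I(X,Z)$, but it is onto only when $X\setminus Z$ is closed in $X$, which fails here: a point of $\cG U$ lying on the boundary of some $gU_i$ need not belong to $GU_{<i}$ (already for $i=1$, where $U_{<1}=\emptyset$, any boundary point of $gU_1$ contained in some $hU_j$ with $j\ge 2$ is such a point). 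The conclusion you want survives, but you must decompose the ideal itself rather than $C_c$ of the open complement. This is what the paper does: an element of $J^i(\cG,\sU,T)$ is a function vanishing outside $\tilde{\cG}^{i+1}U_i$, and since the sets $\bar gU_i$ ($\bar g$ ranging over the finite image $\ol{\cG^{i+1}}$ of $\tilde{\cG}^{i+1}$ in $G/G_i$) are pairwise disjoint open sets, $f=\sum_{\bar g}f\cdot 1_{\bar gU_i}$ with each summand continuous; this exhibits $J^i(\cG,\sU,T)$ as a finite direct sum of the sub-ideals of functions vanishing outside the individual translates, with pairwise zero products. With that replacement, your definition of the $(G,\C^{(G/G_i)})$-structure and the passage to the colimits over $T$ and $\cG$ go through unchanged.
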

\begin{proof}
Let $S\in\cF(V)$ be as in \eqref{elsdelsu}, and let $\cG\subset G$ be a finite subset. By \eqref{ilm} and \eqref{jicgsu}, $J^i(\cG,\sU)$ is the colimit, over 
$T\supset \tilde{\cG}^iS$, of the ideals $J^i(\cG,\sU,T)\triqui\cZc(T\cap (\cG U\setminus\tilde{\cG}^iU_{<i}))$ of those functions $f$ which vanish outside $\tilde{\cG}^{i+1} U_i$. Let $\ol{\cG^{i+1}}$ be the image of $\tilde{\cG}^{i+1}$ in $G/G_i$. By our hypothesis on 
$U_i$, $\tilde{\cG}^{i+1} U_i$ is the disjoint union of the open subsets $\bar{g}U_i$ ($\bar{g}\in\ol{\cG^{i+1}}$). Let $J^i(\cG,\sU,T)_{\bar g}\subset J^i(\cG,\sU,T)$ be the subalgebra of those functions $f$ which vanish outside $\bar{g}U_i$. Then 
\begin{gather*}
J^i(\cG,\sU,T)=\bigoplus_{\bar{g}\in\ol{\cG^{i+1}}}J^i(\cG,\sU,T)_{\bar g}\\
\mbox{ and } J^i(\cG,\sU,T)_{\bar g}J^i(\cG,\sU,T)_{\bar h}=0 \mbox{ if } \bar{g}\neq\bar{h}.
\end{gather*}
Hence $J^i(\cG,\sU,T)$ is an algebra over 
$\C^{(\ol{\cG^{i+1}})}$ such that $\C^{(\ol{\cG^{i+1}})}J^i(\cG,\sU,T)=J^i(\cG,\sU,T)$. One checks that this structure is compatible with the maps
$J^i(\cG,\sU,T)\to J^i(\cG,\sU,T')$, and so we get a $\C^{(\ol{\cG^{i+1}})}$-algebra structure on $J^i(\cG,\sU)$ with $\C^{(\ol{\cG^{i+1}})}J^i(\cG,\sU)=J^i(\cG,\sU)$. Passing to the colimit along the inclusions $\cG\subset\cH$ one obtains a full compatible $(G,\C^{(G/G_i)})$-algebra structure on $J^i(G,\sU)$.
\end{proof}

\noindent{\emph{Proof of Theorem} \ref{thm:proper}.}
Because $\cZc(V)\subset\cAc(V)$ is a central $G$-subalgebra, $\cAc(V)$ carries a canonical compatible $(G,\cZc(V))$-structure. Moreover, by \eqref{unicond} we have
\begin{equation}
\cAc(V)=\cZc(V)\cAc(V).
\end{equation}
Condition \eqref{condibornofull} also holds because it holds for the action of $\cZc(S)$ on $\cAc(S)$ ($S\in\cF(V)$). 
Let $U\subset\fX$ be a $G$-admissible open subset. Put
\[
\cAc(G,U)=\cZc(G,U)\cAc(V).
\]
Because we are assuming that $\E$ and $\F$ commute with filtering colimits up to homotopy, it suffices, in view of Lemma \ref{lem:admicoli}, to prove that $\tau(\cAc(G,U))$ is a weak equivalence for every $G$-admissible open subset 
$U\subset\fX$. Let $\sU=\{U_1,\dots,U_n\}$ be as in \eqref{g-admi}. Define inductively
\begin{gather*}
\cAc^1(G,\sU)=\cAc(G,U),\quad I^i(G,\sU)=J^i(G,\sU)\cAc^i(G,\sU),\\ \cAc^{i+1}(G,\sU)=\cAc^i(G,\sU)/I^i(G,\sU).
\end{gather*}
By \eqref{topfil}, we have
\[
\cAc^{n+1}(G,\sU)=0,\quad I^n(G,\sU)=\cAc^n(G,\sU).
\]
Hence because we are assuming that $\E$ and $\F$ satisfy excision, by \eqref{seq:caci} and induction, we can further reduce to proving that $\tau(I^i(G,\sU))$ is a weak equivalence $(1\le i\le n)$. This follows from Lemma \ref{lem:iprop-jprop}, Lemma \ref{lem:calprop}, and the hypothesis that $\tau(P)$ is a weak equivalence whenever $P$ is proper over $G/H$ and $H$ is finite.\qed

\section{Main results}\label{sec:main}

Let $G$ be a group and $\fin$ the family of its finite subgroups. An equivariant map $f:X\to Y$ of $G$-spaces is called a \emph{$\fin$-equivalence} if $f:X^H\to Y^H$ is a weak equivalence for $H\in\fin$.

\begin{thm}\label{thm:main}
Let $G$ be a countable discrete group. Let $B\in G$-$\Bor$, let $I$ be a $K$-excisive $G$-ring,  let $\sotimes$ be the spatial tensor product, and let $\cK=\cK(\ell^2(\N))$ be the algebra of compact operators; equip $\cK$ with the trivial $G$-action. Assume that $G$ acts metrically properly by affine isometries on a countably infinite dimensional Euclidean space $V$. Then the functor $H^G(-,K(I\otimes(B\sotimes\cK)))$ sends $\fin$-equivalences of $G$-spaces to weak equivalences of spectra.
\end{thm}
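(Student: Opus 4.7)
Let $f\colon X\to Y$ be a given $\fin$-equivalence. For $Z\in\{X,Y\}$, define the functor
\[
\E_Z\colon G\text{-}\Bor\to\spt,\qquad \E_Z(A)=H^G(Z,K(I\otimes(A\motimes\cK\motimes B))),
\]
and let $\tau\colon\E_X\to\E_Y$ be the natural transformation induced by $f$. Since $\C$ and $\cK$ are nuclear, $\C\motimes\cK\motimes B\cong B\sotimes\cK$, so $\E_Z(\C)\cong H^G(Z,K(I\otimes(B\sotimes\cK)))$; hence the theorem is precisely the assertion that $\tau(\C)$ is a weak equivalence.

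The strategy is to reduce to proper coefficients via the two main results of the previous sections. First, Corollary \ref{cor:bott} (whose hypothesis is satisfied because $\E_X$ and $\E_Y$ are instances of the functor \eqref{fun:ele} and $\tau$ is induced by an equivariant map) tells us that it suffices to check that $\tau(\cAc(V))$ is a weak equivalence. I then plan to apply Theorem \ref{thm:proper} to $\tau$: its hypothesis (i) holds by assumption on the action of $G$ on $V$, and hypothesis (iii) amounts to excision (provided by Theorem \ref{thm:kstab}) together with compatibility of $\E_X$ and $\E_Y$ with filtering colimits along injective maps. The latter is routine, as $K$-theory, the functors $I\otimes-$ and $-\motimes\cK\motimes B$, and the coend defining $H^G(Z,-)$ all commute with such colimits.

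The heart of the argument is verifying hypothesis (ii) of Theorem \ref{thm:proper}: for every finite subgroup $H\subset G$ and every $P\in G$-$\Bor$ proper over $G/H$, the map $\tau(P)$ must be a weak equivalence. I plan to exploit the proper decomposition $P=\bigoplus_{gH\in G/H}P_{gH}$ of \eqref{propersum} to establish a Morita-type identification of $\org$-spectra showing that $G/K\mapsto K((I\otimes(P\motimes\cK\motimes B))\rtimes\cG(G/K))$ is concentrated on orbits $G/K$ with $K$ subconjugate to the finite group $H$, and that on such orbits its homotopy type is controlled by the $H$-equivariant data on $(G/K)^H$. This mirrors the $E$-theoretic argument of \cite{ght}*{Theorem 13.1} and its algebraic $K$-theory counterpart for $K$-excisive $\Q$-algebras in \cite{corel}*{Theorem 13.2.1}. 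Once this identification is in place, the coend formula for $H^G(Z,K(I\otimes(P\motimes\cK\motimes B)))$ becomes a homotopy colimit over the full subcategory of $\org$ of orbits with finite stabilizers, whose value at $G/K$ smashes with $Z^K_+$; since $f$ is a $\fin$-equivalence, each map $X^K_+\to Y^K_+$ is a weak equivalence, and hence so is $\tau(P)$.

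I expect the main obstacle to be extending the proof of \cite{corel}*{Theorem 13.2.1}, written for $K$-excisive $\Q$-algebras, to the bornolocal, twisted-coefficient setting of the functor $A\mapsto I\otimes(A\motimes\cK\motimes B)$ over a general $K$-excisive $G$-ring $I$ and a general $B\in G$-$\Bor$. The $K$-excisiveness of $I$, the closure of $K$-excisive rings under filtering colimits, and the stability properties provided by Theorem \ref{thm:kstab} should be the right tools to replicate the proper-coefficient argument, but the bookkeeping required to track the direct sum decomposition of $P$ through $-\motimes\cK\motimes B$ and then through the crossed-product categories $\cG(G/K)$ is the most delicate point; everything else is a formal combination of Corollary \ref{cor:bott}, Theorem \ref{thm:proper}, and Theorem \ref{thm:kstab}.
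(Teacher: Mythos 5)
Your overall architecture coincides with the paper's: reduce via Corollary \ref{cor:bott} to the statement that $\tau(\cAc(V))$ is a weak equivalence, then feed $\tau$ into Theorem \ref{thm:proper}, whose hypotheses (i) and (iii) you verify exactly as the paper does (Theorem \ref{thm:kstab} for excision, and commutation of $K$-theory and the coend with filtering colimits along injections). The divergence is in hypothesis (ii). You propose to re-run a Morita-type induction argument in the style of \cite{ght}*{Theorem 13.1} and \cite{corel}*{Theorem 13.2.1} directly for the functor $A\mapsto I\otimes(A\motimes\cK\motimes B)$, and you flag the resulting bookkeeping through the bornolocal structure and the categories $\cG(G/K)$ as the main obstacle, without carrying it out. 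The paper sidesteps this entirely: properness over $G/H$ is a purely ring-theoretic notion (a full compatible $(G,\Z^{(G/H)})$-structure), Lemma \ref{lem:propertenso} shows that if $P$ is proper over $G/H$ then so is $P\motimes\cK\motimes B$ as a $G$-$*$-algebra, hence $I\otimes(P\motimes\cK\motimes B)$ is proper over $G/H$ as a plain $G$-ring; it is also $K$-excisive (as in the proof of Theorem \ref{thm:kstab}). One then quotes the already-established ring-level result \cite{corel}*{Proposition 4.3.1, Lemma 11.1, and Theorem 11.6} --- if $H$ is finite and $J$ is a $K$-excisive $G$-ring proper over $G/H$, then $H^G(-,K(J))$ sends $\fin$-equivalences to weak equivalences --- applied to $J=I\otimes(P\motimes\cK\motimes B)$. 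So the ``delicate point'' you anticipate dissolves once you notice that at this stage the bornological and $C^*$-structure can simply be forgotten.

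As written, your treatment of hypothesis (ii) is therefore a plan rather than a proof: you correctly identify what must be shown, but the concentration/induction argument you sketch is neither executed nor needed. If you insist on redoing it, it would presumably work, but it amounts to reproving \cite{corel}*{Theorem 11.6} in a special case. I would also note that the reference you reach for, \cite{corel}*{Theorem 13.2.1}, is the assembly-map isomorphism statement, whereas what hypothesis (ii) requires is the stronger ``sends $\fin$-equivalences to weak equivalences'' form, which is what \cite{corel}*{Theorem 11.6} together with Proposition 4.3.1 and Lemma 11.1 provides; be careful to cite the version you actually need.
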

\begin{proof}
Let $Z$ be
a $G$-space; consider the functor $\E_Z:G$-$\Bor\to\spt$,
\[
\E_Z(A)=H^G(Z,K(I\otimes(A\motimes\cK\motimes B))).
\]
We must prove that if $X\to Y$ is a $\fin$-equivalence then $\E_X(\C)\to\E_Y(\C)$ is a weak equivalence. By Corollary \ref{cor:bott} it suffices to show that 
$\E_X(\cAc(V))\to \E_Y(\cAc(V))$ is a weak equivalence. By Theorem \ref{thm:kstab}, the functor $\E_Z$ is excisive, homotopy invariant and $G$-stable. Moreover, it commutes with filtering colimits along injective maps up to weak equivalence, since algebraic $K$-theory commutes with arbitrary filtering algebraic colimits up to weak equivalence. Therefore, by Theorem \ref{thm:proper} we are reduced to proving that if $H\subset G$ is a finite subgroup and $A\in G$-$\Bor$ is proper over $G/H$, then 
\begin{equation}\label{map:gindweq}
\E_X(A)\to \E_Y(A)
\end{equation}
is a weak equivalence. By Lemma \ref{lem:propertenso}, $C=A\motimes\cK\motimes B$ is proper over $G/H$ as a $*$-algebra, and thus $I\otimes C$ is proper over $G/H$ as a ring.
This finishes the proof, since we know from \cite{corel}*{Proposition 4.3.1, Lemma 11.1, and Theorem 11.6}, that if $H$ is finite and $J$ is a $K$-excisive $G$-ring, proper over $G/H$, then $H^G(-,K(J))$ maps $\fin$-equivalences to weak equivalences.
\end{proof}

\begin{cor}\label{cor:main1} (Farrell-Jones' conjecture)
Let $G$, $I$, $B$ and $\cK$ be as in Theorem \ref{thm:main}. Then the assembly map
\[
H^G(\cE(G,\vcyc),K(I\otimes (B\sotimes\cK)))\to K((I\otimes (B\sotimes\cK))\rtimes G)
\]
is a weak equivalence.
\end{cor}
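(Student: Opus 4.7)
The plan is to derive the corollary as an immediate consequence of Theorem \ref{thm:main} plus the identification \eqref{intro:cross} of the target of the assembly map with equivariant homology evaluated at a point.

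First I would observe that every finite subgroup is (trivially) virtually cyclic, so $\fin \subset \vcyc$; consequently the condition defining $\fin$-equivalences is weaker than that defining $\vcyc$-equivalences, and every $\vcyc$-equivalence of $G$-spaces is automatically a $\fin$-equivalence. Next, by the very definition of the model structure recalled in Section~\ref{sec:intro}, the cofibrant replacement map $\cE(G,\vcyc) \to pt$ is a $\vcyc$-equivalence (indeed a trivial $\vcyc$-fibration), hence also a $\fin$-equivalence.

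Apply Theorem \ref{thm:main} to this $\fin$-equivalence with coefficient ring $I\otimes (B\sotimes\cK)$: the induced map
\[
H^G(\cE(G,\vcyc),K(I\otimes (B\sotimes\cK)))\weq H^G(pt,K(I\otimes (B\sotimes\cK)))
\]
is a weak equivalence of spectra. Finally, since $pt = G/G$, the Davis--L\"uck formula \eqref{intro:cross} identifies the right hand side with $K((I\otimes (B\sotimes\cK))\rtimes G)$, and this composite is by definition the assembly map \eqref{intro:assem} of the quadruple $(G,\vcyc,K,I\otimes(B\sotimes\cK))$. Thus the assembly map is a weak equivalence.

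There is no real obstacle here beyond what has already been done: all of the substantive work has been absorbed into Theorem \ref{thm:main}, and the only point one must verify is the purely formal chain $\vcyc\text{-equivalence}\Rightarrow \fin\text{-equivalence}$ together with the identification of $H^G(pt,K(-))$ with $K(-\rtimes G)$.
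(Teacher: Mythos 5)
Your proposal is correct and coincides with the paper's own argument: the paper likewise notes that $\cE(G,\vcyc)\to pt$ is a $\vcyc$-equivalence, hence a $\fin$-equivalence (since $\fin\subset\vcyc$), and applies Theorem \ref{thm:main}, with the identification $H^G(pt,K(-))\simeq K(-\rtimes G)$ left implicit. Your write-up merely spells out these formal steps in more detail.
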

\begin{proof}
The assembly map is induced by $\cE(G,\vcyc)\to pt$, which is a $\vcyc$-equivalence, and therefore a $\fin$-equivalence.
\end{proof}

If $\fB$ is a $C^*$-algebra then by Suslin-Wodzicki's theorem (Karoubi's conjecture) \cite{sw}*{Theorem 10.9} and stability of $K^{\top},$ we have a weak equivalence
\[
K(\fB\sotimes\cK)\weq K^{\top}(\fB\sotimes\cK)\overset{\sim}\longleftarrow K^{\top}(\fB).
\] 
If $G$ is a group and $\fA$ is a $G$-$C^*$-algebra then
\[
(\fA\sotimes\cK)\rtimes G\subset C_\red^*(G,\fA\sotimes \cK)\cong C_\red^*(G,\fA)\sotimes\cK.
\]
Thus there is a map
\begin{equation}\label{map:compaG}
K((\fA\sotimes\cK)\rtimes G)\to K^{\top}(C^*_\red(G,\fA)).
\end{equation}
\begin{cor}\label{cor:main2}
Let $G$ be as in Theorem \ref{thm:main} and let $\fA$ be a separable $G$-$C^*$-algebra. Then \eqref{map:compaG} is a weak equivalence. 
\end{cor}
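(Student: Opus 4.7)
The plan is to factor the comparison map \eqref{map:compaG} through the commutative square \eqref{intro:diag} applied to the stable coefficient algebra $\fA\sotimes\cK$ in place of $\fA$, and to show that all three of the other edges of that square (the two assembly maps and the left-hand vertical comparison map) are weak equivalences; then the right-hand vertical is a weak equivalence by two-out-of-three, and composing with the $\cK$-stability of $K^\top$ yields the result.

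First I would apply Theorem \ref{thm:main} with $I=\Z$ and $B=\fA$ to conclude that the functor $H^G(-,K(\fA\sotimes\cK))$ sends every $\fin$-equivalence to a weak equivalence of spectra. In particular, since $\cE(G,\fin)\to pt$ is by construction a $\fin$-equivalence, the top horizontal assembly map
\[
H^G(\cE(G,\fin),K(\fA\sotimes\cK))\longrightarrow K((\fA\sotimes\cK)\rtimes G)
\]
is a weak equivalence. Next, since $\fA\sotimes\cK$ is separable, Higson--Kasparov's theorem from \cite{hk2} gives the Baum--Connes equivalence
\[
H^G(\cE(G,\fin),K^\top(\fA\sotimes\cK))\overset{\sim}{\longrightarrow} K^\top(C^*_\red(G,\fA\sotimes\cK)).
\]

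The heart of the argument is the analysis of the left vertical comparison map in \eqref{intro:diag}. Because $G$ acts on $\cE(G,\fin)$ with finite stabilizers, by the assembly construction of Davis--L\"uck it suffices to check that for every finite subgroup $H\subset G$, the natural map
\[
K((\fA\sotimes\cK)\rtimes H)\longrightarrow K^\top(C^*_\red(H,\fA\sotimes\cK))
\]
is a weak equivalence. For $H$ finite the crossed product \eqref{map:cross} is an isomorphism, so the target equals $K^\top((\fA\sotimes\cK)\rtimes H)$. The source, on the other hand, is the algebraic $K$-theory of a $C^*$-algebra that is stable (it contains a copy of $\cK$ with trivial $H$-action), so Suslin--Wodzicki \cite{sw}*{Theorem~10.9} identifies it with its topological $K$-theory. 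This gives the left vertical equivalence.

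Combining the three equivalences around the square \eqref{intro:diag} for coefficients $\fA\sotimes\cK$, the right vertical map
\[
K((\fA\sotimes\cK)\rtimes G)\overset{\sim}{\longrightarrow} K^\top(C^*_\red(G,\fA\sotimes\cK))
\]
is a weak equivalence. To finish, I would use that $\cK$ carries the trivial $G$-action, so $C^*_\red(G,\fA\sotimes\cK)\cong C^*_\red(G,\fA)\sotimes\cK$, together with $\cK$-stability of topological $K$-theory, to identify the right-hand side with $K^\top(C^*_\red(G,\fA))$; tracing through the identifications shows that the composite is \eqref{map:compaG}. The step I expect to be most delicate is the verification that the Davis--L\"uck comparison map is computed stabilizer-wise, so that the reduction to finite $H$ and the application of Karoubi's conjecture really do produce a weak equivalence of the full assembled spectra; this hinges on compatibility of the $K\to K^\top$ comparison with crossed products over transport groupoids, which was treated in the earlier sections of the paper.
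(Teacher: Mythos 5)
Your proposal is correct and follows essentially the same route as the paper: the same homotopy commutative square comparing the algebraic and topological assembly maps for $\fA\sotimes\cK$, with the top arrow handled by Theorem \ref{thm:main} (the paper quotes Corollary \ref{cor:main1}), the bottom by Higson--Kasparov via the Hambleton--Pedersen identification, and the left vertical by reduction to finite stabilizers plus Suslin--Wodzicki. The only cosmetic difference is that you keep $\fA\sotimes\cK$ in the bottom row and invoke $\cK$-stability of $K^{\top}$ at the end, whereas the paper builds that identification into the diagram from the start.
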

\begin{proof}
We have a homotopy commutative diagram
\begin{equation}\label{diag:bcfj}
\xymatrix{H^G(\cE(G,\fin),K(\fA\sotimes\cK))\ar[d]\ar[r]&K((\fA\sotimes\cK)\rtimes G)\ar[d]\\
          H^G(\cE(G,\fin),K^{\top}(\fA))\ar[r]& K^{\top}(C_\red^*(G,\fA)).}
\end{equation}
By Corollary \ref{cor:main1} the top horizontal arrow in \eqref{diag:bcfj} is a weak equivalence. By \cite{hp}*{Corollary 8.4}, the bottom arrow is equivalent to the Baum-Connes assembly map, which is an equivalence for Haagerup groups, by \cite{hk2}*{Theorem 1.1}. It follows from the Suslin-Wodzicki theorem \cite{sw}*{Theorem 10.9} that the map \eqref{map:compaG} is an equivalence for finite $G$. Since $\cE(G,\fin)$ has finite stabilizers,
the latter fact implies that the vertical map on the left is a weak equivalence. This concludes the proof.
\end{proof}

\begin{bibdiv}
\begin{biblist}

\bib{balukh}{article}{
   author={Bartels, Arthur},
   author={L{\"u}ck, Wolfgang},
   title={Isomorphism conjecture for homotopy $K$-theory and groups acting
   on trees},
   journal={J. Pure Appl. Algebra},
   volume={205},
   date={2006},
   number={3},
   pages={660--696},
}

\bib{corel}{article}{
    author={Corti{\~n}as, Guillermo},
    author={Ellis, Eugenia},
    title={Isomorphism conjectures with proper coefficients},
    journal={J. Pure Appl. Algebra},
    volume={218},
    date={2014},
    number={7},
    pages={1224--1263},}

\bib{cmr}{book}{
   author={Cuntz, Joachim},
   author={Meyer, Ralf},
   author={Rosenberg, Jonathan M.},
   title={Topological and bivariant $K$-theory},
   series={Oberwolfach Seminars},
   volume={36},
   publisher={Birkh\"auser Verlag},
   place={Basel},
   date={2007},
   pages={xii+262},
}
\bib{dl}{article}{
   author={Davis, James F.},
   author={L{\"u}ck, Wolfgang},
   title={Spaces over a category and assembly maps in isomorphism
   conjectures in $K$- and $L$-theory},
   journal={$K$-Theory},
   volume={15},
   date={1998},
   number={3},
   pages={201--252},
}
\bib{gromo}{article}{
   author={Gromov, M.},
   title={Asymptotic invariants of infinite groups},
   conference={
      title={Geometric group theory, Vol.\ 2},
      address={Sussex},
      date={1991},
   },
   book={
      series={London Math. Soc. Lecture Note Ser.},
      volume={182},
      publisher={Cambridge Univ. Press},
      place={Cambridge},
   },
   date={1993},
   pages={1--295},
}

\bib{ght}{article}{
   author={Guentner, Erik},
   author={Higson, Nigel},
   author={Trout, Jody},
   title={Equivariant $E$-theory for $C^*$-algebras},
   journal={Mem. Amer. Math. Soc.},
   volume={148},
   date={2000},
   number={703},
   pages={viii+86},
}
\bib{hp}{article}{
   author={Hambleton, Ian},
   author={Pedersen, Erik K.},
   title={Identifying assembly maps in $K$- and $L$-theory},
   journal={Math. Ann.},
   volume={328},
   date={2004},
   number={1-2},
   pages={27--57},
}

\bib{hig}{article}{
   AUTHOR = {Higson, Nigel},
     TITLE = {Algebraic {$K$}-theory of stable {$C^ *$}-algebras},
   JOURNAL = {Adv. in Math.},
    VOLUME = {67},
      YEAR = {1988},
    NUMBER = {1},
     PAGES = {140},
     }

\bib{hk1}{article}{
   author={Higson, Nigel},
   author={Kasparov, Gennadi},
   title={Operator $K$-theory for groups which act properly and
   isometrically on Hilbert space},
   journal={Electron. Res. Announc. Amer. Math. Soc.},
   volume={3},
   date={1997},
   pages={131--142 (electronic)},
}

\bib{hk2}{article}{
   author={Higson, Nigel},
   author={Kasparov, Gennadi},
   title={$E$-theory and $KK$-theory for groups which act properly and
   isometrically on Hilbert space},
   journal={Invent. Math.},
   volume={144},
   date={2001},
   number={1},
   pages={23--74},
}

\bib{hkt}{article}{
AUTHOR={Higson, Nigel},
AUTHOR={Kasparov, Gennadi},
AUTHOR={Trout, Jody},
TITLE={A Bott periodicity theorem for infinite dimensional Euclidean space},
JOURNAL={Adv. in Math.},
VOLUME={135},
YEAR={1998},
NUMBER={1},
PAGES={40},
}
\bib{comparos}{article}{
   author={Rosenberg, Jonathan},
   title={Comparison between algebraic and topological $K$-theory for Banach
   algebras and $C^ *$-algebras},
   conference={
      title={Handbook of $K$-theory. Vol. 1, 2},
   },
   book={
      publisher={Springer},
      place={Berlin},
   },
   date={2005},
   pages={843--874},
}

\bib{sw}{article}{
AUTHOR = {Suslin, Andrei A.},
author={Wodzicki, Mariusz},
     TITLE = {Excision in algebraic {$K$}-theory},
   JOURNAL = {Ann. of Math. (2)},
    VOLUME = {136},
      YEAR = {1992},
    NUMBER = {1},
     PAGES = {51--122},}

\bib{wegge}{book}{
AUTHOR = {Wegge-Olsen, N. E.},
     TITLE = {{$K$}-theory and {$C^*$}-algebras},
    SERIES = {Oxford Science Publications},
   ADDRESS = {New York},
      date = {1993}}
\bib{kh}{article}{
   author={Weibel, Charles A.},
   title={Homotopy algebraic $K$-theory},
   conference={
      title={Algebraic $K$-theory and algebraic number theory (Honolulu, HI,
      1987)},
   },
   book={
      series={Contemp. Math.},
      volume={83},
      publisher={Amer. Math. Soc.},
      place={Providence, RI},
   },
   date={1989},
   pages={461--488},
}
\end{biblist}
\end{bibdiv}

\end{document}